\documentclass[10pt]{amsart}
\allowdisplaybreaks

\usepackage[a4paper]{geometry}
\usepackage{amsfonts}
\usepackage{amsmath}
\usepackage{amssymb}
\usepackage{amsthm}
\usepackage{amsbsy}
\usepackage{xcolor}
\usepackage{nicematrix}

\usepackage{graphicx}
\usepackage{caption}
\usepackage{subcaption}
\usepackage{float}
\usepackage{enumerate}
\usepackage{multicol}
\usepackage{mathrsfs}
\usepackage{array}
\usepackage[all,cmtip]{xy}
\usepackage[color=blue!20,textsize=footnotesize]{todonotes}
\usepackage{tikz}
\usetikzlibrary{calc,fit,patterns,decorations.markings,matrix,3d,arrows.meta}

\newcounter{dummy} \numberwithin{dummy}{section}
\newtheorem{theorem}[dummy]{Theorem}
\newtheorem{corollary}[dummy]{Corollary}
\newtheorem{lemma}[dummy]{Lemma}
\newtheorem{definition}[dummy]{Definition}
\newtheorem{proposition}[dummy]{Proposition}
\theoremstyle{remark}
\newtheorem{remark}[dummy]{Remark}
\newtheorem{example}[dummy]{Example}

\newcommand{\calH}{\mathcal{H}}
\newcommand{\calT}{\mathcal{T}}

\newcommand{\frakR}{\mathfrak{R}}

\newcommand{\R}{\mathbb{R}}

\DeclareMathOperator{\Sym}{Sym}

\DeclareMathOperator{\id}{id}

\DeclareMathOperator{\spn}{span}
\DeclareMathOperator{\tr}{tr}

\DeclareMathOperator{\E}{E}
\DeclareMathOperator{\SO}{SO}
\DeclareMathOperator{\Ort}{O}

\DeclareMathOperator{\so}{\mathfrak{so}}

\newcommand{\frakg}{\mathfrak{g}}

\newcommand{\MG}{\mathrm{MG}}

\newcommand{\DV}{\overrightarrow{\rm Deg}}
\newcommand{\hDV}{\widehat{\rm Deg}}
\newcommand{\NPDO}{\mathrm{NPDO}}

\numberwithin{equation}{section}

\title{Equivariant nonlinear partial differential operators on constant curvature spaces}
\author[F. Ballerin and E. Grong]{Francesco Ballerin and Erlend Grong}
\date{}

\address{University of Bergen, Department of Mathematics, P.O.~Box 7803, 5020 Bergen, Norway}
\email{francesco.ballerin@uib.no}
\email{erlend.grong@uib.no}

\thanks{The first author is supported by the grant GeoProCo from the Trond Mohn Foundation - Grant TMS2021STG02 (GeoProCo).}

\subjclass[2020]{58C99,58J99,58D19}

\keywords{equivariant nonlinear partial differential operators, Riemannian model spaces, multigraphs, classifying space}

\begin{document}

\begin{abstract}
Motivated by PDE-learning, we give a classifying space for nonlinear partial differential operators equivariant under the action of isometries. This classifying space is for operators defined on Riemannian model spaces. The nonlinear operators we are considering are those that can be written as a polynomial in linear operators. We show that the classifying space for such operators can be realized as the vector space spanned by equivalence-classes of multigraphs. We also illustrate how this realization can help us discover non-trivial linear dependence relations between nonlinear differential operators relative to the dimension of the manifold. Finally, we give some comments on operators equivariant under the identity component of the isometry group and under isometry groups of sub-Riemannian model spaces.
\end{abstract}

\maketitle


\section{Introduction}

Characterizing the class of differential operators that are equivariant for the action induced by the isometry group of a Riemannian manifold is a fundamental problem in differential geometry, with direct implications wherever symmetry is a modeling constraint. In this paper we construct a universal object for nonlinear differential operators on the Riemannian model spaces that are equivariant with respect to all isometries. By a result of Helgason~\cite{helgason1959differential} from 1959, all isometry-equivariant linear differential operators are polynomials in the Laplace-Beltrami operator $\Delta$, giving a one-to-one correspondence between such operators and polynomials in one variable. Helgason proved this statement for the class of two-point homogeneous spaces, which includes the constant curvature, simply connected spaces. For the latter, we will show that the classifying space for nonlinear polynomial  operators is the real vector space spanned over equivalence classes of multigraphs. 

Let $M$ be a given manifold of dimension $d \geq 2$. Let $L_1, \dots, L_n$ be linear partial differential operators of respective orders $r_1, \dots, r_n$ on $M$, and define an operator
\begin{equation} \label{Pdiff} P: f \mapsto (L_1 f) \cdot (L_2f) \cdots (L_n f), \qquad f \in C^\infty(M). \end{equation}
We then say that $P$ is a \emph{nonlinear polynomial operator of order $k = \max \{r_1, \dots, r_n\}$, total order $p = r_1 + \cdots +r_n$ and polynomial degree $n$.}
We include the possibility where $P(f) =\psi \in C^\infty(M)$ is equal to a function independent of $f$ as an operator with order, total order and polynomial degree all zero. We extend this definition by linearity, stating that $P = \sum_{l=1}^{l_{\max}} P_l$ is a nonlinear polynomial differential operator if it is a sum of differential operator $P_1, \dots, P_{l_{\max}}$ on the form as in \eqref{Pdiff}. We say that $P$ has respectively order $k$, total order $p$ and polynomial degree~$n$ if these are respectively the maximal order, total order and polynomial degree over all the operators $P_1,\dots,P_{l_{\max}}$. We write $\NPDO(M)$ for the vector space of such nonlinear polynomial differential operators, and write $\NPDO^p(M)$ for the subspace of operators of total order at most $p$.

Suppose that the manifold $M$ under consideration is endowed with a Riemannian metric $g$ with Levi-Civita connection $\nabla$.
%
%
Let $G$ be the isometry group of $(M,g)$. We say that $P \in \NPDO(M)$ is \emph{$G$-equivariant} if
\[P(f \circ \varphi) = (Pf)\circ \varphi, \qquad \text{ for all $\varphi \in G, f\in C^\infty(M)$}.\]
We write $\NPDO(M)^G$ for the subspace of such equivariant operators. The larger the isometry group $G$ of $(M,g)$ is, the more restrictions we have for an operator to be $G$-equivariant. To introduce a universal object for manifolds with maximal isometry groups, let $\MG$ denote the collection of all isomorphism classes of multigraphs with a finite collection of vertices and edges, see Section~\ref{sec:MG} for more details. For each isomorphism class $[\gamma] \in \MG$ of the multigraph $\gamma$ with vertices $V[\gamma]$ and edges $E[\gamma]$, introduce $N_\gamma \in \NPDO(M)$ according to the following rules: for each vertex $v \in V[\gamma]$, define $\nabla^v = \nabla^{\deg(v),\Sym}$ as the symmetrized covariant derivative of order equal to that of $v$ and let $\nabla^{V[\gamma]} f= \otimes_{v\in V[\gamma]} \nabla^v f$ for any $f \in C^\infty(M)$. We then define
$$N_\gamma f= \tr_{E[\gamma]} \nabla^{V[\gamma]} f,$$
where $\tr_{E[\gamma]}$ denotes the operation of taking one trace for every edge in $E[\gamma]$, such that if $e = \{ v_1, v_2\} \in E[\gamma]$ is an edge between $v_1$ and $v_2$, we take a corresponding trace between $\nabla^{v_1} f$ and $\nabla^{v_2}f$. The order of taking these traces does not matter, as the symmetrized covariant derivatives make the operations commute. Furthermore, $\nabla^{V[\gamma]} f$ is a tensor whose order equals to the sum of all of the degrees of the vertices (twice the number of edges), so the process of taking one trace for each edge leads to $N_\gamma f$ having values in scalars. If $\gamma = \bullet^n$ contains no edges, but with $n \geq 1$ vertices, then we use the convention that $\tr_E$ is the identity, so that $N_\gamma = \nabla^{V[\gamma]} f = f^n$. We adopt the convention that $\nabla^{0,\Sym}f =f$. Finally, if $\gamma=\bullet^0$ is the null graph, i.e., the graph with no vertices, we use the convention that $N_{\bullet^0}f = 1$. We highlight the following properties of this definition.
\begin{enumerate}[(i)]
\item If $\gamma = \gamma_1 \cup \gamma_2$ is the disjoint of union two multigraphs, then $N_{\gamma}f = (N_{\gamma_1}f) \cdot  (N_{\gamma_2} f)$.
\item If $\gamma$ has $n$ vertices, $p$ edges and $k$ is the maximal degree of all of the vertices, then $N_\gamma$ has order $k$, total order $2p$ and polynomial degree $n$. In particular, the total order is always even.
\end{enumerate}
We can verify that $N_{\gamma}$ does not depend on the representative from the isomorphism class $[\gamma]$, see Section~\ref{sec:LandtildeL} for details. For examples of these operators, see Section~\ref{sec:Examples}. With this notation introduced, we are ready to present our main result.
\begin{theorem} \label{th:main}
Let $(M,g)$ be a simply connected, Riemannian model space, i.e., a simply connected Riemannian manifold with a constant sectional curvature. Write $d \geq 2$ for its dimension and $G$ for its isometry group. Let $\spn_{\mathbb{R}} \MG= \oplus_{[\gamma] \in \MG} \mathbb{R} [\gamma]$ denote the vector space spanned by the set of isomorphism classes of multigraphs. Then the linear map
$$N: \spn_{\mathbb{R}} \MG \to \NPDO(M)^G, \qquad \text{determined by} \qquad N([\gamma]) = N_\gamma,$$
is surjective. Furthermore, if $\MG^p$ is the subset of isomorphism classes of multigraphs with at most $p$ edges and $p \leq d$ then,
$$N|_{\spn_{\mathbb{R}}\MG^p} :\spn_{\mathbb{R}}\MG^p \to \NPDO^{2p}(M)^G \qquad \text{is bijective.}$$
\end{theorem}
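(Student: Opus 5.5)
The plan is to reduce the problem to the invariant theory of the isotropy group $\Ort(d)$ at a fixed point $o \in M$. By homogeneity, any $P \in \NPDO(M)^G$ is determined by its value at $o$. Fix the normal coordinates or the exponential map at $o$; a polynomial operator of total order at most $2p$ at $o$ is then a polynomial in the jet variables $(\nabla^{j,\Sym} f)(o) \in \Sym^j(T_o^*M)$. Equivariance under $G$ forces this polynomial to be invariant under the stabilizer $G_o \cong \Ort(d)$. Conversely, any $\Ort(d)$-invariant polynomial in the symmetrized jets extends uniquely to a $G$-equivariant operator, obtained by transporting it with $G$. This uses that $G$ acts transitively and that the $\nabla^{j,\Sym}$ are natural with respect to isometries. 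The conversion between arbitrary linear operators and symmetrized covariant derivatives is triangular: $\nabla^j f$ differs from $\nabla^{j,\Sym} f$ by curvature terms, which are constant multiples of $g$ contracted with lower-order derivatives. So $\NPDO(M)^G$ is identified with the $\Ort(d)$-invariant elements of the polynomial algebra $\Sym\big(\bigoplus_{j\ge 0} \Sym^j(\R^d)\big)$, where the $j=0$ slot carries $f$ itself. The orientation-reversing elements matter here: the full isometry group contains reflections, which excludes contractions with the volume form.

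Surjectivity then follows from the first fundamental theorem of invariant theory for $\Ort(d)$. Every invariant polynomial in tensors is a linear combination of complete contractions of tensor products using only the metric $g$. A monomial in the jets is a tensor product $\bigotimes_v \nabla^{j_v,\Sym} f$, and a complete contraction pairs up all slots. Since each factor is symmetric, the result depends only on how many pairings join factor $v$ to factor $w$ (loops allowed). This data is exactly a multigraph whose vertex $v$ has degree $j_v$, and relabeling the factors gives isomorphic graphs. Hence every invariant is a combination of the $N_\gamma$, and $N$ is surjective. Total order $2p$ corresponds to $p$ edges, so $N$ maps $\spn_\R \MG^p$ onto $\NPDO^{2p}(M)^G$; odd total order contributes nothing.

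For injectivity when $p \le d$, I would invoke the second fundamental theorem for $\Ort(d)$. Linear relations among complete contractions of degree $2p$ in the entries come from antisymmetrizing over $d+1$ indices, which requires at least $2(d+1)$ tensor slots, i.e.\ $p \ge d+1$. I would make this concrete rather than merely cite it. Graded by the multiset of vertex degrees, the contraction map from the pairings of the $2p$ slots to polynomial functions is dual to the map $\big(\bigotimes \Sym^{j_v}\big)^{\Ort(d)} \leftarrow$ Brauer-type pairings. This map is injective on the span of the $(2p-1)!!$ perfect matchings when $2p \le 2d$, i.e.\ when the matchings are linearly independent as invariants in $(\R^d)^{\otimes 2p}$. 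One can check this directly by evaluating on vectors $e_{i_1},\dots,e_{i_{2p}}$ built from distinct basis indices, which needs $p \le d$ indices. Then I symmetrize within each vertex: the $\mathrm{S}_{j_v}$-symmetrization sends matchings to multigraph-type orbits, and independence of matchings implies independence of the orbit sums. Finally, different vertex-degree profiles, or graphs that differ only by a relabeling of vertices, must be separated by evaluating on jets that are genuinely independent. Since the jets $(\nabla^{j,\Sym} f)(o)$ can be prescribed arbitrarily through the choice of $f$ (Borel), the polynomial map is injective on polynomials.

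The main obstacle is this last bookkeeping. The polynomial is a function of each $\Sym^{j}$-variable repeated across several vertices of the same degree, so the correct statement is independence of the symmetrized-over-automorphisms contractions. I would handle it by polarization: replace the repeated variables by distinct copies $T_v \in \Sym^{j_v}$, apply the multilinear independence above, and then symmetrize over the permutations of equal-degree vertices, which exactly identifies isomorphic multigraphs.
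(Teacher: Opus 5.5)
Your proposal is correct and follows essentially the same route as the paper: you reduce to $\Ort(d)$-invariant tensors via the isotropy group at a base point plus transitivity on $M$, while the paper shows the associated functions on the frame bundle $\Ort(M)$ are constant. From there both arguments use Iwahori's spanning theorem for surjectivity, and for $p\le d$ combine linear independence of the $\tau_\rho$ with symmetrization over $S_{\vec\beta}$ and the fact that the symmetrized jets can be prescribed arbitrarily. Your test of each $\tau_{\rho'}$ on the tuple that places the same basis vector $e_k$ in both slots of the $k$-th pair of $\rho$ gives the identity matrix, which is a clean self-contained proof of the independence statement that the paper only cites from Iwahori (read ``distinct basis indices'' as one index per pair, not per slot).
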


See Section~\ref{sec:ProofMain} for the proof, where we will use the structure of the frame bundle along with Iwahori's result on invariant tensors on Euclidean space \cite{iwahori1958some}. Interestingly, similar results have recently appeared for nonlinear operators on vector fields, using the formalism of $B$-series \cite{laurent2025universal}.
 We emphasize that the maps in the image of $N$ defined in Theorem~\ref{th:main} will be equivariant under isometries for any Riemannian manifold, but there will in general be more operators when it is not a Riemannian model space. Also observe that we are considering equivariance with respect to the full isometry group, not just its connected component, the latter of which we will give some details on in Section~\ref{sec:Lesssymmetry}.

From Theorem~\ref{th:main}, the kernel of the map $N$ always consists of linear combinations of multigraphs with strictly more than $d$ edges, meaning that the intersection of the kernel of $N$ over all possible dimensions of $M$ is equal to zero. By studying this kernel, we can discover non-trivial relations between nonlinear operators. We illustrate this application by giving the following result for differential operators of total order $6$.
\begin{theorem} \label{th:PQvanishes}
On any Riemannian manifold $(M, g)$ of dimension $d \geq 2$, and let $\Delta f = \tr \nabla^2_{\ast ,\ast } f$ denote its Laplacian. Define nonlinear differential operators $P^{\beta_0}, Q^{\beta_0}$, $\beta_0 = 0,1,2, \dots$ on $M$ by
\begin{align*}
P^{\beta_0}f & = f^{\beta_0} \cdot \left(\|\nabla f\|^2 (\Delta f)^2 - \|\nabla f\|^2 \|\nabla^2 f\|^2 + 2\|\nabla^2 f( \nabla f, \cdot )\|^2 - 2\Delta f \cdot \nabla^2f(\nabla f, \nabla f) \right), \\
Q^{\beta_0}f & = f^{\beta_0} \cdot \left((\Delta f)^3 - 3 \Delta f \| \nabla^2 f\|^2 + 2\tr \nabla^2_{\ast _1,\ast _2} f \nabla^2_{\ast _1,\ast _3} f \nabla^2_{\ast _2,\ast _3}f  \right),
\end{align*}
for any $f\in C^\infty(M)$. Then these operators are equivariant with respect to isometries. Furthermore, the operators $\{ P^{\beta_0}, Q^{\beta_0}\}$ are identically zero if and only if $d = 2$.
\end{theorem}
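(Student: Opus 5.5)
Equivariance needs no model-space hypothesis. Both $P^{\beta_0}$ and $Q^{\beta_0}$ are built from $f$, $\nabla f$, $\nabla^2 f$, the metric and full traces. An isometry $\varphi$ preserves $g$ and the Levi-Civita connection, so $\nabla^k(f\circ\varphi)=\varphi^*\nabla^k f$. Full contractions of pulled-back tensors are pullbacks of the contractions, so each operator commutes with composition by $\varphi$. This is the same argument that shows each $N_\gamma$ is equivariant on any Riemannian manifold.

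For the vanishing statement, the key observation is pointwise linear algebra. At a point $x$, choose an orthonormal basis and set $v=\nabla f(x)\in\R^d$ and $A=\nabla^2 f(x)\in\Sym(\R^d)$. Then
\[
Q^0 f(x)=(\tr A)^3-3\tr A\,\tr A^2+2\tr A^3 = 6\,e_3(\lambda_1,\dots,\lambda_d),
\]
where the $\lambda_i$ are the eigenvalues of $A$; this follows from Newton's identities. Similarly,
\[
P^0f(x)=|v|^2\bigl((\tr A)^2-\tr A^2\bigr)+2|Av|^2-2\tr A\,\langle Av,v\rangle .
\]
Diagonalising $A$ and writing $v=\sum v_ie_i$, this becomes
\[
\sum_i v_i^2\Bigl(2e_2(\lambda)+2\lambda_i^2-2\lambda_i\tr A\Bigr)=2\sum_i v_i^2\, e_2(\lambda_{\hat\imath}),
\]
where $e_2(\lambda_{\hat\imath})$ is the second elementary symmetric polynomial in the eigenvalues other than $\lambda_i$. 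Up to a constant, this is $\langle \operatorname{adj}_2\text{-type}(A)v,v\rangle$; equivalently it is a sum over $3\times3$ principal-type minors of the matrix with $v$ bordered onto $A$.

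For $d=2$: $e_3$ of two numbers vanishes, and $e_2(\lambda_{\hat\imath})$ is an elementary symmetric polynomial of degree $2$ in a single number, which also vanishes. Hence $P^0=Q^0=0$, and multiplying by $f^{\beta_0}$ preserves this.

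For $d\ge3$, I need a function on $M$ with prescribed $f(x)\ne0$, $\nabla f(x)=v$ and $\nabla^2 f(x)=A$. I take, in normal coordinates around $x$,
\[
f=c+\langle v,y\rangle+\tfrac12\langle Ay,y\rangle
\]
multiplied by a cutoff. The Christoffel symbols vanish at $x$ in normal coordinates, so $\nabla^2 f(x)=A$. Choosing $A=\operatorname{diag}(1,1,1,0,\dots)$ gives $e_3=1$, so $Q^{\beta_0}f(x)=6c^{\beta_0}\neq0$. Choosing $A=\operatorname{diag}(0,1,1,0,\dots)$ and $v=e_1$ gives $P^{\beta_0}f(x)=2c^{\beta_0}\neq0$. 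Thus neither operator is identically zero when $d\ge 3$.

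The main obstacle is the algebraic identity for $P$. I would verify it carefully by expanding $(\tr A)^2-\tr A^2=2e_2$ and checking that $e_2-\lambda_i\tr A+\lambda_i^2=e_2(\lambda_{\hat\imath})$. This holds because $e_2=e_2(\lambda_{\hat\imath})+\lambda_i(\tr A-\lambda_i)$.

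Finally, I interpret the statement as "identically zero on every manifold of dimension $d$". The computation is pointwise, so the argument is independent of curvature.
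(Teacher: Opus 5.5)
Your argument is correct, but it takes a genuinely different route from the paper.

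The paper derives $P^{\beta_0}$ and $Q^{\beta_0}$ from tensor invariant theory. It computes the $5$-dimensional kernel of $\rho\mapsto\tau_\rho$ from $\spn\frakR_3$ to $\calT^6_2$ using the $15\times15$ evaluation matrix. It then averages that kernel over $S_{\vec\beta}$ for every $\vec\beta$ with $|\vec\beta|_E=3$, and finds that only $[0,3]$ and $[2,2]$ survive, with one relation each. Non-vanishing for $d\ge3$ comes from the injectivity half of Theorem~\ref{th:main}: Iwahori's linear independence for $p\le d$, plus the fact that $\nabla^\beta f(x)$ fills $\Sym^{\vec\beta}T^*_xM$. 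Equivariance comes from writing both operators as images under $N$ of combinations of multigraphs.

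You instead diagonalize the Hessian and identify $Q^0f=6e_3(\lambda)$ and $P^0f=2\sum_i v_i^2e_2(\lambda_{\hat\imath})=2\langle (A^2-(\tr A)A+e_2(\lambda)I)v,v\rangle$. In dimension two both vanish, and the second is exactly Cayley--Hamilton, so you extend the paper's Cayley--Hamilton remark from $Q$ to $P$. Your route is elementary and avoids Iwahori and the orbit-averaging bookkeeping. Because both the vanishing and your normal-coordinate witnesses are pointwise, it also proves the claim on arbitrary Riemannian manifolds directly, rather than through a theorem stated for model spaces.

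What the paper's route buys is discovery and completeness. It produces the relations instead of verifying given ones, shows they account for the whole kernel of $N|_{\spn\MG^3}$ when $d=2$, and gives a procedure for general $p>d$. None of this is required by the statement itself.

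One unused aside should be corrected. Up to sign, $\sum_i v_i^2e_2(\lambda_{\hat\imath})$ is the sum of the $4\times4$ principal minors of $\bigl(\begin{smallmatrix}0&v^\top\\ v&A\end{smallmatrix}\bigr)$ that contain the border row. The $3\times3$ ones give $\sum_i v_i^2e_1(\lambda_{\hat\imath})$ instead.
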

These operators exactly correspond to the generators of the kernel of $N|_{\spn_{\mathbb{R}} \MG^3}$ when $d=2$, see Section~\ref{sec:CounterEx} for details and proof.

Our motivation for this work comes in part from the growing field of PDE-learning, involving data-driven discovery of a partial differential operator determining the underlying dynamics. Obtaining a human-readable PDE usually involves sparse regression, where the regressors come from numerical computations of a library of candidate nonlinear partial differential operators \cite{brunton2016discovering,rudy2017data,messenger2021weak}, sometimes with an intermediate step involving neural networks \cite{both2021deepmod,stephany2022pde}. If the underlying dynamics is assumed to be preserved under isometries, then we only need isometry-equivariant candidate terms. 
Theorem~\ref{th:main} gives a complete and explicit basis for the space of nonlinear polynomial isometry-equivariant operators on model spaces with multigraphs serving as an explicit encoding. In particular, if one limits the consideration of candidate terms by polynomial degree and total order, then Theorem~\ref{th:main} shows that the number of equivariant candidate terms are asymptotically constant with respect to the dimension $d$. For example, Theorem~\ref{th:main} shows that the number of equivariant candidate terms up to total order $4$ is the same for every $d \geq 2$, while Theorem~\ref{th:PQvanishes} shows the difference between $d =2$ and higher dimensions in the case of total order 6. Including equivariance in PDE-learning is a natural continuation of its involvement in other PDE-related learning tasks, e.g., ~\cite{gao2022roteqnet, he2022neural, lagrave2022equivariant, andersdotter2024equivariant, sangalli2022differential}.

\smallskip

\paragraph{\bf Structure of the paper}
Section~\ref{sec:preliminaries} collects the necessary background on multigraphs and their encoding, the theory of invariant tensors and Iwahori's theorem for linear tensors invariant under orthogonal transformations. Section~\ref{sec:PDOs} develops the theory of nonlinear polynomial differential operators on Riemannian manifolds, introducing symmetrized covariant derivatives and the contraction machinery. Section~\ref{sec:LandtildeL} constructs the operators $N_\gamma$ associated to multigraphs in detail and proves that they are well-defined. This section also contains several examples. Section~\ref{sec:FrameBundles} introduces frame bundles, and connects the concepts of equivariant operators on a manifold and invariant maps on the frame bundle. Section~\ref{sec:EquivariantOperators} describes Riemannian model spaces and contains the proof of Theorem~\ref{th:main}. Section~\ref{sec:furtherResults} finally looks at several related problems to the main result. We look in detail at the kernel of the map $N$ when $p = 6$ and $d=2$, giving us in the end Theorem~\ref{th:PQvanishes}. This section also gives a classifying space for operators that are invariant under the connected component of the isometry group; a result which is valid for real projective spaces in addition to the Riemannian model spaces. Finally, we give some brief details on nonlinear operators invariant under isometry groups of sub-Riemannian model spaces. The appendix contains tables with more examples.

\smallskip

\paragraph{\bf Notation}
We emphasize the following points related to notation, in order to make it simpler for the reader to follow.
\begin{enumerate}[$\bullet$]
\item The dimension of the manifold $M$ will always be denoted $d$. Points are denoted as $x\in M$, while tangent vectors are denoted by $\xi \in TM$. Vector fields are denoted by $Y$, while $X$ can be a vector field or a multi-vector field.
\item $G$ will always be an isometry group of the Riemannian manifold $(M,g)$ in question and $\varphi \in G$ will be used for an individual isometry.
\item A multigraph $\gamma$ will always have $n$ vertices and $p$ edges. We will denote vertices and vertex sets as $v \in V[\gamma]$, and similarly write $e \in E[\gamma]$ for edges.
\item We denote linear differential operators by $L$, while nonlinear operators are denoted by~$P$ or~$Q$. The latter will have polynomial degree $n$ and total order $p$, or $2p$ if the order is even. This notation is chosen to reflect the correspondence with multigraphs. We use $N$, possibly with some accents, for a mapping into nonlinear operators.
\end{enumerate}
\bigskip

\paragraph{\bf Acknowledgment} We thank Gunnar Fløystad and Adrien Busnot Laurent for helpful comments.

\section{Preliminaries} \label{sec:preliminaries}

\subsection{Multigraphs} \label{sec:MG}

In this paper, multigraphs arise naturally as combinatorial objects encoding invariant contraction patterns between symmetric covariant derivatives of a function. We begin by reviewing the pertinent definitions and establishing the notation.

A \emph{multigraph} $\gamma$ consists of a finite set of vertices $V[\gamma]=\{v_1,\dots,v_n\}$ together with a finite multiset of edges $E[\gamma]=\{\{v_{i_1},v_{j_1}\},\dots,\{v_{i_p},v_{j_p}\}\}$, where each edge $e \in E[\gamma]$ is an unordered pair $\{v_1,v_2\}$ of vertices in $V[\gamma]$. In particular, multiple edges between the same pair of vertices are allowed, and we also allow \emph{loops}, i.e., edges of the form $\{v,v\}$. The reader should take note that in some of the literature multigraphs do not allow loops, and the term \textit{pseudographs} is used instead \cite{Bollobas1998, Harary1969}.
Two multigraphs are said to be \emph{isomorphic} if there exists a bijection between their vertex sets inducing a bijection between edge multisets.

The \emph{degree} $\deg(v)$ of a vertex $v \in V[\gamma]$ is defined as the number of edges incident to $v$, where each loop contributes $2$ to the degree of the vertex to which it is attached.

\begin{figure}[h]
\centering
\begin{tikzpicture}[scale=0.7]

\begin{scope}[xshift=0cm]
  \node[circle, draw, fill=black, inner sep=2pt] (A1) at (0,0) {};
  \node[circle, draw, fill=black, inner sep=2pt] (B1) at (2,0) {};
  \node[circle, draw, fill=black, inner sep=2pt] (C1) at (1,1.4) {};

  \draw (A1) -- (B1);
  \draw (C1) to[out=60,in=120,loop] ();

  \node at (1,-0.8) {(a)};
\end{scope}

\begin{scope}[xshift=5cm]
  \node[circle, draw, fill=black, inner sep=2pt] (A2) at (0,0) {};
  \node[circle, draw, fill=black, inner sep=2pt] (B2) at (1,1.4) {};
  \node[circle, draw, fill=black, inner sep=2pt] (C2) at (2,0) {};

  \draw (A2) to[out=60,in=120,loop] ();
  \draw (B2) -- (C2);

  \node at (1,-0.8) {(b)};
\end{scope}

\begin{scope}[xshift=10cm]
  \node[circle, draw, fill=black, inner sep=2pt] (A3) at (0,0) {};
  \node[circle, draw, fill=black, inner sep=2pt] (B3) at (2,0) {};
  \node[circle, draw, fill=black, inner sep=2pt] (C3) at (1,1.4) {};

  \draw (B3) to[bend left=20] (C3);
  \draw (B3) to[bend right=20] (C3);

  \node at (1,-0.8) {(c)};
\end{scope}

\end{tikzpicture}
\caption{Three multigraphs with the same number of vertices.
Graphs (a) and (b) are isomorphic, while graph (c) is not isomorphic to them.}
\end{figure}

In the combinatorial study of multigraphs, one natural grading arises from the total number of edges $p=\lvert E[\gamma]\rvert$, which we will refer to as the degree of the multigraph (not to be confused with the vertex degree).
We will frequently classify multigraphs according to their \emph{degree}, counting loops with multiplicity one. We can construct increasing sets $\MG^0 \subset \MG^1 \subset \MG^2 \subset \cdots \subset \MG$ where $\MG^p$ consists of equivalence classes of multigraphs with at most $p$ edges.
While $\MG^p$ is not a finite set, the subset $\MG_\times^p$ of isomorphism classes without isolated vertices is finite, and we have a bijection $\MG^p \to \mathbb{Z}_{\geq 0} \times \MG_\times^p$. If $\bullet^{\beta_0}$ denotes the multigraph of $\beta_0 \geq 0$ isolated vertices, then this bijection from $\mathbb{Z}_{\geq 0} \times \MG_\times^p$ to $\MG^p$ is given by $(\beta_0,[\gamma]) \to [\bullet^{\beta_0} \cup \gamma]$.

\subsection{Encoding multigraphs by perfect matchings} \label{sec:UnorderedPairs}

For an integer $p \geq 1$, a \emph{perfect matching} on $\{1,\dots,2p\}$ is a partition into $p$ unordered pairs. We write $\frakR_p$ for the set of all such matchings, so each element takes the form
\[
    \rho = \bigl\{\{i_1,i_2\},\,\{i_3,i_4\},\,\dots,\,\{i_{2p-1},i_{2p}\}\bigr\}.
\]
The symmetric group $S_{2p}$ acts on $\frakR_p$ by
\[
    \sigma\cdot\rho = \bigl\{\{\sigma(i_1),\sigma(i_2)\},\,\dots,\,\{\sigma(i_{2p-1}),\sigma(i_{2p})\}\bigr\},
\]
and this action is transitive. We will adopt the convention that $\frakR_0 = \{ \emptyset\}$ contains just the empty set for $p = 0$.

The goal of this section is to show that every isomorphism class of multigraphs with $p$ edges can be represented by a pair $(\rho,\beta)$, where $\rho\in\frakR_p$ encodes an edge-pairing pattern and $\beta: \mathbb{Z}_{0\geq 0}\to\mathbb{Z}_{\geq 0}$ encodes the distribution of vertex degrees. Given a multigraph $\gamma$ with $p$ edges, an \emph{edge-end} of $\gamma$ is a formal incidence of an edge at a vertex: each non-loop edge $\{u,v\}$ contributes one edge-end at $u$ and one at $v$, while a loop at $v$ contributes two edge-ends at $v$. In either case each edge contributes exactly two edge-ends, so $\gamma$ has $2p$ edge-ends in total.

We define a \emph{parametrization} of $\gamma$ as a pair $(\rho,m)$ consisting of $\rho\in\frakR_p$ and a map $m:\{1,\dots,2p\}\to V[\gamma]$ such that
\begin{equation}\label{EdgeM}
    E[\gamma] = m(\rho) := \bigl\{\{m(i_1),m(i_2)\},\,\dots,\,\{m(i_{2p-1}),m(i_{2p})\}\bigr\}.
\end{equation}
Intuitively, $m$ assigns a vertex to each index and~$\rho$ pairs the indices into edges. Every multigraph of degree~$p$ admits a parametrization: choose any bijection from $\{1,\dots,2p\}$ to the edge-ends of~$\gamma$, let~$m(i)$ be the vertex at which edge-end $i$ lies, and let $\rho$ pair the two indices whose edge-ends belong to the same edge. Conversely, given any $\rho\in\frakR_p$ and map $m:\{1,\dots,2p\}\to V$ into any finite set $V$, equation~\eqref{EdgeM} defines a multigraph with vertex set $V$ which we will write $\tilde\Gamma(\rho,m)$.

Related to parametrizations $\gamma = \tilde \Gamma(\rho, m)$, we introduce the following concept.
\begin{definition}
A map $\beta:\mathbb{Z}_{\geq 0} \to\mathbb{Z}_{\geq 0}$ is called a \emph{degree vector} if
\[
    |\beta|_E \;:=\; \frac{1}{2}\sum_{j=1}^\infty j\cdot\beta(j)
\]
is finite and a non-negative integer. In particular, $\beta(r)$ is zero for all but finite $r \in \mathbb{Z}_{\geq 0}$. We also set $|\beta|_V := \sum_{j=1}^\infty \beta(j)$.
\end{definition}

We write $\DV$ for the set of all degree vectors. The motivation for defining degree vectors comes from multigraphs, since if we let $\beta(j)$ counts the number of vertices of degree $j$ in a multigraph, so $|\vec{\beta}|_V$ and $|\vec{\beta}|_E$ are the total number of vertices and edges, respectively. The integrality condition on $|\vec{\beta}|_E$ is automatic for multigraphs, since $\sum_j j\cdot\vec{\beta}(j)$ equals twice the number of edges by the handshaking lemma.

If $\beta$ is not identically zero, we say that it has \emph{order} $k$ if it is is the maximal number such that $\beta(k) \neq 0$. To have an efficient notation for degree vectors, we write an order $k$-degree vector as $\beta = (\beta_0, \vec{\beta})$, with $\beta_0 =\beta(0)$ and $\vec{\beta} = [\beta(1), \beta(2), \dots, \beta(k)]) =: [ \beta_1, \dots, \beta_k]$, which gives sufficient information to determine the degree vector. We also adopt the notation that $|\vec{\beta}|_E = |(0, \vec{\beta})|_E$ and $|\vec{\beta}|_E = |(0, \vec{\beta})|_E$. We emphasize that the value of $|\beta|_E$ does not depend on the value of $\beta_0$. We write $\beta =0$ when both $\beta_0$ and $\vec{\beta}$ are identically zero.

Given a degree vector $\beta$ with $|\beta|_E = p$ and $|\beta|_V = n$, we construct a canonical vertex set $V_{\beta} = \{v_1,\dots,v_n\}$ by listing vertices in non-decreasing order of degree: $v_1,\dots,v_{\beta_0}$ have degree~$0$; the next $\beta_1$ vertices have degree~$1$; and so on.
We define the map $m_{\beta}:\{1,\dots,2p\}\to V_{\beta}$ uniquely by the following conditions. Let $l_I$ be the degree of $v_I$ satisfying $\sum_{I=1}^n l_I = 2p$. Setting $s_0 = 0$ and $s_I = \sum_{J=1}^{I-1} l_J$, we require
\[
    m_{\beta}^{-1}(v_I) = \{s_I+1,\,\dots,\,s_I+l_I\} \qquad \text{for all } 1\leq I\leq n.
\]
with the set above being empty when $l_I =0$. Uniqueness follows because the fibers are prescribed by $\beta$ and are listed in the natural order $v_1 < v_2 < \cdots < v_n$. The fiber $m_{\beta}^{-1}(v_I)$ indexes the $l_I$ edge-ends at $v_I$. For any $\rho\in\frakR_p$ we define
\[
    \tilde\Gamma(\rho,\beta) = \tilde\Gamma(\rho,\beta_0,\vec{\beta}) \;:=\; \tilde\Gamma(\rho,\,m_{\beta}).
\]
It will also be convenient to have the notation $\tilde\Gamma(\rho,\vec{\beta}) := \tilde\Gamma(\rho,0,\vec{\beta})$, in particular in examples. For the special case of $p =0$, where $\rho = \emptyset$ and $\beta =0$, we define $\tilde \Gamma(\emptyset,0) = \bullet^0$, the null graph.

\begin{example}\label{ex:ParamGraph}
Let $\beta_0 =0$ and $\vec{\beta} = [2,2]$. Then $|\beta|_V = 4$ and $|\beta|_E = \tfrac{1}{2}(1\cdot 2 + 2\cdot 2) = 3$, so $p = 3$. The canonical map $m = m_{0,[2,2]}:\{1,2,3,4,5,6\}\to\{v_1,v_2,v_3,v_4\}$ is given by
\[
    m(1) = v_1,\quad m(2) = v_2,\quad m(3) = m(4) = v_3,\quad m(5) = m(6) = v_4,
\]
where $v_1,v_2$ are the two degree-$1$ vertices and $v_3,v_4$ are the two degree-$2$ vertices. For $\rho_1 = \{\{1,2\},\{3,4\},\{5,6\}\}$ and $\rho_2 = \{\{1,3\},\{2,5\},\{4,6\}\}$, the multigraphs $\tilde\Gamma(\rho_1,[2,2])$ and $\tilde\Gamma(\rho_2,[2,2])$ are shown below and are not isomorphic.

\begin{figure}[h]
\centering
\begin{tikzpicture}[scale=0.7]

\begin{scope}[xshift=0cm]
  \node[circle, draw, fill=black, inner sep=2pt, label=left:$v_1$] (V1) at (0,0) {};
  \node[circle, draw, fill=black, inner sep=2pt, label=above:$v_2$] (V2) at (1,1) {};
  \node[circle, draw, fill=black, inner sep=2pt, label=right:$v_3$] (V3) at (2,0) {};
  \node[circle, draw, fill=black, inner sep=2pt, label=below:$v_4$] (V4) at (1,-1) {};

  \draw (V1) to (V2);
  \draw (V3) to[out=60,in=120,loop] ();
  \draw (V4) to[out=60,in=120,loop] ();

\end{scope}

\begin{scope}[xshift=6cm]
  \node[circle, draw, fill=black, inner sep=2pt, label=$v_1$] (V1) at (0,0) {};
  \node[circle, draw, fill=black, inner sep=2pt, label=$v_3$] (V3) at (1,0) {};
  \node[circle, draw, fill=black, inner sep=2pt, label=$v_4$] (V4) at (2,0) {};
  \node[circle, draw, fill=black, inner sep=2pt, label=$v_2$] (V2) at (3,0) {};

  \draw (V1) to (V3);
  \draw (V3) to (V4);
  \draw (V4) to (V2);
\end{scope}

\end{tikzpicture}
\caption{Multigraphs of $\tilde \Gamma(\rho_1, [2,2])$ (left) and $\tilde \Gamma(\rho_2,[2,2])$ (right), which are not isomorphic.}
\end{figure}

\end{example}

\begin{proposition} \label{prop:ParamSurj}
Every multigraph $\gamma$ with $p$ edges is isomorphic to $\tilde\Gamma(\rho,\,\beta)$ for some degree vector $\vec{\beta} \in \DV$ with $|\beta|_E =p$.
\end{proposition}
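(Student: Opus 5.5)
We will construct the degree vector and the matching directly from $\gamma$, and then exhibit an explicit isomorphism.

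\emph{Step 1: the degree vector.} Given $\gamma$ with $p$ edges and $n$ vertices, let $\beta(j)$ be the number of vertices of degree $j$, for $j \geq 0$. By the handshaking lemma, $\sum_j j\,\beta(j) = 2p$, so $|\beta|_E = p$ is a non-negative integer and $\beta \in \DV$. Likewise $\sum_{j\ge 0}\beta(j) = n$, so the canonical vertex set $V_\beta$ also has $n$ elements.

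\emph{Step 2: the vertex bijection.} Choose a bijection $\phi: V[\gamma] \to V_\beta = \{v_1,\dots,v_n\}$ that preserves degrees. This means the vertices of $\gamma$ are listed in non-decreasing order of degree, with ties broken arbitrarily, and the $I$-th vertex is sent to $v_I$. Such a bijection exists because $V_\beta$ is built with exactly $\beta(j)$ vertices of degree $j$, listed in non-decreasing order. Write $l_I = \deg(\phi^{-1}(v_I))$, which agrees with the prescribed fibre size $|m_\beta^{-1}(v_I)|$.

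\emph{Step 3: the matching.} For each vertex $u = \phi^{-1}(v_I)$, enumerate its $l_I$ edge-ends and label them by the indices $s_I+1,\dots,s_I+l_I$. A loop contributes two labels at $u$, and a non-loop edge contributes one label at each of its endpoints. This gives a bijection between $\{1,\dots,2p\}$ and the edge-ends of $\gamma$, and under it the vertex carrying label $i$ is $\phi^{-1}(m_\beta(i))$. Now let $\rho \in \frakR_p$ pair two labels exactly when the corresponding edge-ends belong to the same edge. Each edge has exactly two edge-ends, so $\rho$ is a perfect matching.

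\emph{Step 4: verification.} The edge multiset of $\tilde\Gamma(\rho,\beta)$ is $\{\{m_\beta(i),m_\beta(j)\} : \{i,j\}\in\rho\}$. Each such pair is $\{\phi(u),\phi(w)\}$, where $\{u,w\}$ is the edge of $\gamma$ whose ends carry labels $i$ and $j$. Because $\rho$ is in bijection with $E[\gamma]$, this correspondence preserves multiplicities and treats loops correctly. Hence $\phi$ carries $E[\gamma]$ bijectively onto $E[\tilde\Gamma(\rho,\beta)]$, and $\phi$ is an isomorphism. When $p=0$ there are no edges, $\rho=\emptyset$, and $\beta = (n,0)$; the isomorphism is then just a bijection of vertex sets, and the null graph is covered by the convention $\tilde\Gamma(\emptyset,0)=\bullet^0$.

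The only delicate point is the bookkeeping in Step 3. The edge-end labels at each vertex must fill exactly the block of indices assigned to it by $m_\beta$, and loops must be counted as two edge-ends at the same vertex, consistent with the convention that a loop contributes $2$ to the degree. Once the vertex order respects degrees, the rest is immediate.
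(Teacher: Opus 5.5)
Your proposal is correct and matches the paper's proof: you order the vertices by non-decreasing degree, read off $\beta$ via the handshaking lemma, label the edge-ends at the $I$-th vertex by the block $\{s_I+1,\dots,s_I+l_I\}$, let $\rho$ pair the two ends of each edge, and check that the degree-ordered vertex bijection is an isomorphism. The only differences are cosmetic: you write the isomorphism as $\phi\colon V[\gamma]\to V_\beta$ rather than the paper's inverse map $\psi\colon v_I\mapsto w_I$, and you treat the $p=0$ case explicitly.
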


\begin{proof}
Let $w_1, \dots, w_n$ be the vertices ordered so that $\deg(w_1) \leq \cdots \leq \deg(w_n)$. Set $l_I = \deg(w_I)$ and define $\beta(j) := \#\{I : l_I = j\}$. Then $|\beta|_V = n$ and, by the handshaking lemma, $|\beta|_E = \frac{1}{2}\sum_I l_I = p$, so $\beta$ is a degree vector with $|\beta|_E = p$.

Again set $s_0 = 0$ and $s_I = \sum_{J=1}^{I-1} l_J$ for $I =1, \dots, n$.
Choose for each $1 \leq I \leq n$ a bijection
\[
    \phi_I \colon \bigl\{\text{edge-ends of } \gamma \text{ at } w_I\bigr\}
    \;\xrightarrow{\;\sim\;}\;
    \{s_I+1, \dots, s_I+l_I\}.
\]
Since edge-ends at distinct vertices are disjoint and account for all $2p$ edge-ends of $\gamma$, the maps $\phi_I$ combine into a bijection $\phi$ from the edge-ends of $\gamma$ to $\{1,\dots,2p\}$. For each edge $e \in E[\gamma]$, let $a_e$ and $b_e$ denote the $\phi$-images of its two edge-ends, and define
\[
    \rho \;:=\; \bigl\{\{a_e,\, b_e\} : e \in E[\gamma]\bigr\}.
\]
Since each edge-end belongs to exactly one edge, every index appears in exactly one pair of $\rho$, so $\rho \in \frakR_p$.

For any edge $e = \{w_I, w_J\}$, the edge-ends of $e$ lie at $w_I$ and $w_J$, so $a_e \in \phi_I(\cdots) = m_{\beta}^{-1}(v_I)$ and $b_e \in m_{\beta}^{-1}(v_J)$. Therefore the edge multiset of $\tilde\Gamma(\rho,\beta)$ equals $\{\{v_I, v_J\} : \{w_I,w_J\} \in E[\gamma]\}$, and the map $\psi : v_I \mapsto w_I$ is an isomorphism from $\tilde\Gamma(\rho,\beta)$ to $\gamma$.
\end{proof}

In light of this proposition, we define
\[
    \Gamma(\rho,\beta) := [\tilde\Gamma(\rho,\beta)].
\]
as isomorphism classes in $\MG$. We remark that in particular $\MG^p_{\times}$ is the image of all $\rho \in \frakR_{\tilde p}$ and $(0,\vec{\beta})\in \DV$ with $|\vec{\beta}|_{E} = \tilde p$, $\tilde p \leq p$.

The parametrized multigraphs $\tilde\Gamma(\rho,\beta)$ carry natural symmetries arising from their construction, and we can identify two subgroups of $S_{2p}$ that capture them. The first, $S_{0,\beta}$, is generated by permutations within individual fibers of $m_{\beta}$: it consists of all $\sigma \in S_{2p}$ that map each fiber $m_{\beta}^{-1}(v_I)$ to itself. For such $\sigma$ one has $m_{\beta} \circ \sigma = m_{\beta}$, so every edge of $\tilde\Gamma(\sigma\cdot\rho,\beta)$ coincides with the corresponding edge of $\tilde\Gamma(\rho,\beta)$. In particular,
\[
    \tilde\Gamma(\sigma\cdot\rho,\beta) = \tilde\Gamma(\rho,\beta) \qquad \text{for all } \sigma \in S_{0,\beta}.
\]
The second, $S_{1,\beta}$, captures the freedom to swap fibers of equal size. If $v_{I_1}$ and $v_{I_2}$ have the same degree $l$, with fibers $m_{\beta}^{-1}(v_{I_s}) = \{i_s+1,\dots,i_s+l\}$ for $s=1,2$, define $\sigma_{I_1I_2} \in S_{2p}$ by
\[
    \sigma_{I_1I_2}(i_1+r) = i_2+r, \qquad
    \sigma_{I_1I_2}(i_2+r) = i_1+r \qquad (r = 1,\dots,l),
\]
and the identity on all remaining elements. Since $\sigma_{I_1I_2}$ maps $m_{\beta}^{-1}(v_{I_1})$ onto $m_{\beta}^{-1}(v_{I_2})$ and vice versa, the multigraph $\tilde\Gamma(\sigma_{I_1I_2}\cdot\rho,\beta)$ is obtained from $\tilde\Gamma(\rho,\beta)$ by relabeling $v_{I_1}$ and $v_{I_2}$. The two multigraphs need not be equal as labeled graphs, but are isomorphic via the vertex transposition of $v_{I_1}$ and $v_{I_2}$ (which is valid since both vertices have the same degree $l$). We let $S_{1,\beta}$ be the subgroup of $S_{2p}$ generated by all such transpositions $\sigma_{I_1I_2}$.

Finally, $S_{\beta}$ denotes the subgroup of $S_{2p}$ generated by both $S_{0,\beta}$ and $S_{1,\beta}$. From the above, $\Gamma(\sigma\cdot\rho,\beta) = \Gamma(\rho,\beta)$ for all $\sigma \in S_{\beta}$. Since $S_{\beta}$ with $\beta = (\beta_0, \vec{\beta})$ does not depend on $\beta_0$ by construction, we will frequently use the notation $S_{\vec{\beta}}$ Moreover, since the degree of $v_I$ in $\tilde\Gamma(\rho,\beta)$ equals $l_I$ for every $\rho$ (each fiber element appears in exactly one pair of $\rho$), any isomorphism between two parametrized multigraphs with the same $\beta$ must permute vertices within degree classes, and can therefore be realized by an element of $S_{\vec{\beta}}$. Thus the $S_{\vec{\beta}}$-orbits on $\frakR_p$ are in bijection with the isomorphism classes of multigraphs with degree vector $(0,\vec{\beta})$. 

\begin{example}
In Example~\ref{ex:ParamGraph}, the group $S_{0,[2,2]}$ is generated by transpositions $(34)$ and $(56)$, which permute within the fibers of $v_3$ and $v_4$ respectively, while $S_{1,[2,2]}$ is generated by $(12)$, swapping the fibers of $v_1$ and $v_2$, and $(35)(46)$, swapping the fibers of $v_3$ and $v_4$.
\end{example}



\subsection{Invariant tensors} \label{sec:Tensors}
Let $\calT^p_d$ be the space of covariant $p$-tensors on $\mathbb{R}^d$ that are invariant under the action of $\Ort(d)$.
In other words, we consider $p$-tensors $\tau$ such that
$$\tau(A x_1, \dots, Ax_p) = \tau(x_1, \dots, x_p), \qquad \text{for any $x_1, \dots, x_p \in \mathbb{R}^d$, $A \in \Ort(d)$}.$$
Recall the definition of $\frakR_p$ from Section~\ref{sec:UnorderedPairs}.
Observe that this set consists of $\frac{(2p)!}{p! \cdot 2^p} = (2p-1) \cdot (2p-3) \cdots 3 \cdot 1$ elements.
For every $\rho \in \mathfrak{R}_p$, define a tensor in $\calT_d^{2p}$ by
$$\tau_\rho(x_1, \dots, x_{2p}) = \prod_{\{i,j\} \in \rho} \langle x_i, x_j \rangle.$$
For the special case of $p=0$, we define $\tau_\emptyset =1$ as a constant.

For the proofs in our paper, we rely on the results below by Iwahori \cite[Theorem 1 and Remark 2]{iwahori1958some}. See also \cite[Chapter~II]{weyl1946classical}. 
\begin{lemma} \label{lemma:RhoSpans}
\begin{enumerate}[\rm (a)]
\item For $p$ odd, $\calT_d^p =0$.
\item The space $\calT_d^{2p}$ is spanned by $\tau_\rho$, $\rho \in \frakR_p$.
\item If $p\leq d$, then $\tau_\rho$, $\rho \in \frakR_p$ is a basis for $\calT_d^{2p}$.
\end{enumerate}
\end{lemma}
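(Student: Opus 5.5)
The three parts are of very different difficulty. Part (a) is a one-line symmetry argument, part (c) is an explicit evaluation, and part (b) is the genuine content, namely the first fundamental theorem of invariant theory for $\Ort(d)$. Since the paper attributes the lemma to Iwahori \cite{iwahori1958some} and Weyl \cite{weyl1946classical}, I would prove (a) and (c) directly and reduce (b) to the classical theorem, indicating how that theorem is proved.

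For (a), take $A=-\id\in\Ort(d)$. Any $\tau\in\calT^p_d$ then satisfies $\tau(x_1,\dots,x_p)=\tau(-x_1,\dots,-x_p)=(-1)^p\tau(x_1,\dots,x_p)$ by multilinearity. So $\tau=0$ when $p$ is odd.

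For (c), the spanning part comes from (b), so only linear independence is needed. Suppose $\sum_{\rho\in\frakR_p}c_\rho\tau_\rho=0$ and fix $\rho_0=\{\{i_1,i_2\},\dots,\{i_{2p-1},i_{2p}\}\}$. Because $p\le d$, we may assign to the $k$-th pair of $\rho_0$ the standard basis vector $e_k$, with the $p$ vectors distinct. Concretely, set $x_{i_{2k-1}}=x_{i_{2k}}=e_k$. For this input, $\tau_\rho(x_1,\dots,x_{2p})$ is a product of inner products of basis vectors, so it is nonzero (and equal to $1$) exactly when every pair of $\rho$ joins two indices carrying the same label. Each label occurs exactly twice, so this forces $\rho=\rho_0$. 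Evaluating the relation therefore gives $c_{\rho_0}=0$. Since $\rho_0$ was arbitrary, all coefficients vanish.

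For (b), the plan is to pass from multilinear forms to polynomials and then invoke Weyl's first fundamental theorem.

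1. Identify $\tau\in\calT^{2p}_d$ with the $\Ort(d)$-invariant polynomial $f(x_1,\dots,x_{2p})=\tau(x_1,\dots,x_{2p})$ on $(\R^d)^{2p}$, which is multilinear in the $2p$ vector arguments.
2. Apply the first fundamental theorem: every $\Ort(d)$-invariant polynomial in vectors $x_1,\dots,x_m$ is a polynomial in the inner products $\langle x_i,x_j\rangle$.
3. Extract the multihomogeneous component of multidegree $(1,\dots,1)$. A monomial in the $\langle x_i,x_j\rangle$ lies in that component exactly when each index appears once, i.e. when it equals some $\tau_\rho$. Since $f$ is already of this multidegree, $f$ is a linear combination of the $\tau_\rho$.

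The main obstacle is the first fundamental theorem itself. I would prove it in two regimes.

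When $m\le d$, the orbit of a generic tuple is determined by its Gram matrix $(\langle x_i,x_j\rangle)$. The Gram map is onto the positive semidefinite matrices, and the Gram matrices of linearly independent tuples form an open cone. Hence $f$ factors through the Gram map, and polynomiality of the factorization follows by restricting $f$ to tuples in triangular (Cholesky) form and using that $f$ is polynomial there.

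When $m>d$, I would reduce to the previous case by Weyl's Capelli-identity argument, which expresses an invariant of $m$ vectors through polarizations of invariants of $d$ vectors. If this becomes too heavy, I would instead cite \cite[Theorem~1]{iwahori1958some}, or Brauer's description of the commutant of $\Ort(d)$ acting on $(\R^d)^{\otimes 2p}$, as the paper itself does.
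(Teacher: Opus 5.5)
Your proofs of (a) and (c) are correct and complete. For (a), $-\mathrm{id}\in\Ort(d)$ forces $\tau=(-1)^p\tau$. For (c), with $p\le d$, the test input $x_{i_{2k-1}}=x_{i_{2k}}=e_k$ built from $\rho_0$ gives $\tau_\rho(x_1,\dots,x_{2p})=\delta_{\rho\rho_0}$, which yields independence. The paper proves none of the three parts; it only quotes Iwahori's Theorem~1 and Remark~2 (and Weyl, Chapter~II). So your direct arguments for (a) and (c) are a genuine, elementary addition. For (b), your reduction is correct: view $\tau$ as an $\Ort(d)$-invariant polynomial of multidegree $(1,\dots,1)$ in $x_1,\dots,x_{2p}$, apply the first fundamental theorem, and note that the monomials in the $\langle x_i,x_j\rangle$ of that multidegree are exactly the $\tau_\rho$ (factors $\langle x_i,x_i\rangle$ are excluded, having degree $2$ in $x_i$). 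This leaves you resting on the same classical theorem as the paper.

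Do not, however, present your sketch of the first fundamental theorem as a proof. In the regime $m\le d$, polynomiality of $f$ on the triangular slice does not give polynomiality in the Gram entries. Take $m=2$ and the slice $x_1=(r_{11},0,\dots,0)$, $x_2=(r_{12},r_{22},0,\dots,0)$. The function $r_{12}^2$ is a polynomial invariant under the sign changes preserving the slice, yet there it equals $\langle x_1,x_2\rangle^2/\|x_1\|^2$. What Cholesky actually gives is $f=F(G)$ with $F$ rational, its denominators being products of leading principal minors of $G$. To remove them you must use that $f$ is polynomial on all of $(\R^d)^m$, together with surjectivity of the complexified Gram map onto complex symmetric $m\times m$ matrices (true for $m\le d$).

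More importantly, this regime is rarely the relevant one. You need $m=2p$, which generally exceeds $d$ (already for $d=2$, $p=3$, the case of Section~\ref{sec:CounterEx}). So the real content sits in the Capelli reduction, which you only name. With your stated fallback of citing Iwahori's theorem, the argument is complete, and exactly as complete as the paper's.
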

For the first non-trivial case, we will show explicitly in Section~\ref{sec:CounterEx} that $\tau_\rho$, $\rho \in \frakR_3$ does not form a basis for $\calT_2^6$, and show how this fact related to linear dependence relations of nonlinear differential operators.

\begin{remark} \label{re:averageVecm}
Consider the space of tensors $(\mathbb{R}^{d,\ast })^{2p}$ with $S_{2p}$ acting on elements here by $(\sigma \cdot \tau)(x_1, \dots, x_{2p}) = \tau(x_{\sigma^{-1}(1)}, \dots, x_{\sigma^{-1}(2p)})$. We then see that the symmetric product $\Sym^{2p} TM$ is the image of the averaging map $\tau \mapsto \frac{1}{(2p)!} \sum_{\sigma \in S_{2p}}(\sigma \cdot \tau)$. Similarly, for a given degree vector $\beta = (\beta_0, \vec{\beta})\in \DV$ with $|\vec{\beta}|_E = p$, define $\Sym^{\vec{\beta}} (\mathbb{R}^d)^\ast $ as the image of $\tau \mapsto \frac{1}{|S_{\vec{\beta}}|} \sum_{\sigma \in S_{\vec{\beta}}}(\sigma \cdot \tau)$. We see that $\Sym^{\vec{\beta}} (\mathbb{R}^{d})^\ast $ is spanned by $\alpha^{\odot \vec{\beta}(1)}_1 \otimes \cdots \otimes \alpha^{\odot \vec{\beta}(k)}_k$ with $\alpha_j \in \Sym^j \mathbb{R}^{d,\ast }$. An important point for later is that if $\tau \in \calT^{2p}_d$, then $\frac{1}{|S_{\vec{\beta}}|} \sum_{\sigma \in S_{\vec{\beta}}}(\sigma \cdot \tau)$ is still invariant.
\end{remark}

\section{Polynomial differential operators on Riemannian model spaces}\label{sec:PDOs}
Let $(M,g)$ be a Riemannian manifold with Levi--Civita connection $\nabla$. Throughout the paper, all differential operators will be constructed from covariant derivatives of functions by means of contractions with respect to the Riemannian metric.

\subsection{Iterated covariant derivatives}

Let us first consider the iterative covariant derivative of a function.
Assume that $f \in C^k(M)$ with differential $df$. For vector fields $Y_0, \dots, Y_{j-1}$, let 
\begin{align*}
    \nabla_{Y_0} f &= df(Y_0)\\
    \nabla_{Y_0,Y_1}^2 f &= \nabla_{Y_0} \nabla_{Y_1} f - \nabla_{\nabla_{Y_0} Y_1} f,\\
    \nabla_{Y_0,\dots,Y_j}^{r+1} f &= \nabla_{Y_0} \nabla_{Y_1, \dots, Y_j}^r f - \nabla_{\nabla_{Y_0} Y_1, \dots, Y_j}^j f - \cdots - \nabla_{Y_1, \dots, \nabla_{Y_0}Y_r}^j f,
\end{align*}
for $2\leq j$.
We can introduce $r$-tensors $\nabla^rf$ for $0\leq r$ by definition through iterated covariant derivatives as $\nabla^r f(Y_1, \dots, Y_r):=\nabla^r_{Y_1, \dots, Y_r} f $, and have the convention that $\nabla^1_{Y_0} = \nabla_{Y_0}$ and $\nabla^0 = \id$. Observe that
$$\nabla^2_{X,Y}f - \nabla_{Y,X}^2f = R(X,Y)f=0, \qquad X,Y \in \Gamma(TM)$$
with $R$ the curvature operator. As $R(X,Y)f=0$ for any function $f \in C^\infty(M)$, we deduce that $\nabla^2_{X,Y} f = \nabla^2_{Y,X}f$, i.e. the Hessian of a function is symmetric.
On the other hand, higher derivatives are not symmetric on non-flat spaces, as shown by a short calculation
\[\left(\nabla^3_{Y_1, Y_2, Y_3} -\nabla^3_{Y_2,Y_1, Y_3}\right)f =\nabla_{R(Y_1, Y_2)Y_3}f\]
which vanishes only when the curvature vanishes. Hence, $\nabla^r f$ is not symmetric whenever the curvature is non-vanishing.
However, the skew-symmetric parts can be expressed as derivatives of lower order, so all the information is captured by the symmetrized $r$-th derivatives $\nabla^{r,\Sym} f$, 
\[
\nabla_{Y_{1}, \dots, Y_k}^{r,\Sym}f = \frac{1}{r!} \sum_{\sigma \in S_r} \nabla_{Y_{\sigma(1)}, \dots, Y_{\sigma(r)}}^r f
\]
with $S_r$ being the permutation group of order $r$. With $\circlearrowright$ denoting the cyclic sum we remark, for example, that $\nabla^{2,\Sym}_{Y_1,Y_2}f = \nabla^{2}_{Y_1,Y_2}f$,
\begin{align}
\nabla^{3,\Sym}_{Y_1,Y_2,Y_3}f & = \frac{1}{3} \circlearrowright \nabla^{3}_{Y_1,Y_2,Y_3} f \label{3Sym} \\ \nonumber
& = \nabla_{Y_1, Y_2, Y_3}^3 f + \frac{1}{3} \nabla_{R(Y_2,Y_1)Y_3} f + \frac{1}{3} \nabla_{R(Y_3, Y_1)Y_2}f.
\end{align}
Let now $\varphi$ be any isometry of $(M,g)$ and $Y$ a vector field on $M$. We define $\varphi_\ast  Y$ as the vector field $x \mapsto \varphi_\ast  (Y(\varphi^{-1}(x)))$. Recall the property of the Levi-Civita connection $\nabla$, we have $\varphi_\ast  \nabla_X Y = \nabla_{\varphi_\ast  X} \varphi_\ast  Y$, $X,Y \in \Gamma(TM)$ (see, e.g., \cite[Prop. 5.6]{lee2018introduction}) which when applied iteratively produces

\[
\nabla^r (f \circ \varphi)(Y_1, \dots, Y_k) = \nabla^r f(\varphi_\ast  Y_1, \dots, \varphi_\ast  Y_r) \circ \varphi,
\]
\begin{equation} \label{EquaNabla} 
\nabla^{r,\Sym} (f \circ \varphi)(Y_1, \dots, Y_k) = \nabla^{r,\Sym} f(\varphi_\ast  Y_1, \dots, \varphi_\ast  Y_k) \circ \varphi.
\end{equation}
If $X \in \Gamma(TM^{\otimes r})$ is a multi-vector field with $\varphi_*X: x\mapsto \varphi_* (X(\varphi^{-1}(x)))$, we can write the above equalities as
\begin{equation} \label{Xvarphistar}
\nabla^r(f \circ \varphi)(X)  = \nabla^rf(\varphi_*X) \circ \varphi.
\end{equation}
We also observe the identity $\nabla^{r+1}_{Y \otimes X} f = \nabla_Y \nabla_X^r f - \nabla_{\nabla_Y X}^r f$ for vector field $Y \in \Gamma(TM)$ and multi-vector field $X \in \Gamma(TM^{\otimes r})$.

\begin{remark}
Since we will need $\nabla^r f$, $f \in C^\infty$ to denote an $r$-tensor, this will mean that $\nabla^1 f = \nabla f = df$ is then a 1-tensor. However, in other expression, in particular, when writing out examples of equivariant differential operators, it will still be convenient to let $\nabla f = \sharp df$ be a vector field. We will allow for this ambiguity in notation, letting the context determine if $\nabla f$ denotes a form or a vector field.
\end{remark}

\subsection{Nonlinear polynomial differential operators} \label{sec:DiffOp}

The goal of this section is to make precise the class of operators studied throughout the paper and to show, in Proposition~\ref{prop:SpanNPDO}, that every such operator can be expressed as a finite linear combination of products of symmetric covariant derivative operators, which is a reduction that we will take advantage of in subsequent sections.

Let $L_1, \dots, L_l$ be linear differential operators on a smooth manifold $M$. For a polynomial $p$ in $l$ variables, the operator
\[
    Pf := p(L_1 f, \dots, L_l f)
\]
is called a \emph{nonlinear polynomial differential operator of order $\leq k$}.
We write $\NPDO(M)$ for the linear span of all operators of this form. The \emph{polynomial degree} of $P$ is the degree of $p$; when $p$ is homogeneous of degree $n$, equivalently $P(cf) = c^n Pf$ for all $c \in \mathbb{R}$, we say that $P$ has
\emph{homogeneous polynomial degree $n$}.

We introduce a second notion of degree that accounts for the differential order of each factor. When $p$ is a monomial and $Pf = (L_1 f)^{s_1} \cdots (L_l f)^{s_l}$, with each $L_j$ having differential order $r_j$, the \emph{total order} of $P$ is $\sum_{j=1}^l s_j r_j$. For a general $P = \sum_{r=1}^{r_{\max}} P_r$, a finite sum of monomials, the total order of $P$ is the
maximum of the total orders of the $P_r$.

We remark that $\NPDO(M)$ depends only on the smooth structure of $M$: the operators $L_j$ range over all linear differential operators of order $\leq k$, with no reference to any metric or connection. A Riemannian metric contributes to the definition only when singling out specific elements of $\NPDO(M)$, such as $\Delta f = \tr \nabla^2_{*,*}f$ or $\|\nabla f\|^2$.

Consider now a Riemannian metric $g$ on $M$ with Levi-Civita connection $\nabla$. For any $0 \leq r \leq k$ and $X \in \Gamma(\Sym^r TM)$, we let it act as a differential operator on function by
\begin{equation} \label{XSym} Xf := \nabla^{r,\Sym} f(X).\end{equation}
If $L$ is a linear differential operator of order $r$, we say that $X \in \Gamma(\Sym^r TM)$ is \emph{the leading symmetric part of $L$} if $L - X$ has order strictly less than $r$. Every linear differential operator of order $r$ has a unique leading symmetric part with respect to $\nabla$.

\begin{remark}
Uniqueness is immediate: if $X, X' \in \Gamma(\Sym^r TM)$ both satisfy the condition, then $X - X'$ acts by $f \mapsto \nabla^{r,\Sym}f(X - X')$ and has order strictly less than $r$, which forces $X = X'$ since $\nabla^{r,\Sym}f$ can realize any symmetric $r$-tensor at any point (take $f$ to be a degree-$r$ monomial in normal coordinates). For existence, work in local coordinates: write $L = \sum_{|\alpha| \leq r} a^\alpha \partial_\alpha$ and define $X$ via the symmetrized leading coefficients $X^{i_1 \cdots i_r} = a^{(i_1 \cdots i_r)}$. Under a change of coordinates, the Christoffel symbols of $\nabla$ contribute only lower-order terms to $\nabla^{r,\Sym}$, so the difference $L - X$ remains of order strictly less than $r$ in every chart, confirming that $X$ is a globally well-defined section of $\Sym^r TM$.
\end{remark}

\begin{proposition} \label{prop:SpanNPDO}
The space $\NPDO^p(M)$ is spanned by operators of the form
\[
    Pf = (X_1 f)(X_2 f) \cdots (X_n f), \qquad n \geq 0,
\]
where each $X_i \in \Gamma(\Sym^{r_i} TM)$ for some , $r_i \geq 0$, $\sum_{i=1}^n r_i \leq p$.
\end{proposition}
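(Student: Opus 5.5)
We show that every linear differential operator of order $r$ is a finite sum of operators $f\mapsto Xf$ with $X\in\Gamma(\Sym^j TM)$, $0\le j\le r$. Then we expand products. The inclusion of the span into $\NPDO^p(M)$ is clear: each $f\mapsto Xf=\nabla^{r_i,\Sym}f(X)$ is a linear differential operator of order at most $r_i$. So a product of such factors lies in $\NPDO(M)$ with total order at most $\sum r_i\le p$.

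\emph{Step 1: decomposing a linear operator.} We argue by induction on the order $r$, using the leading symmetric part. For $r=0$, $L$ is multiplication by a function $\psi$, and $\psi=X$ with $X=\psi\in\Gamma(\Sym^0TM)$. For $r\ge1$, let $X_r\in\Gamma(\Sym^rTM)$ be the leading symmetric part of $L$, which exists by the preceding Remark. Then $L-X_r$ has order strictly less than $r$. By the induction hypothesis, $L-X_r=\sum_{j<r}X_j$ with $X_j\in\Gamma(\Sym^jTM)$. Hence
\[
L=\sum_{j=0}^r X_j .
\]

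\emph{Step 2: products.} Write a general element of $\NPDO^p(M)$ as a linear combination of monomials $Pf=(L_1f)\cdots(L_nf)$, where $L_i$ has order $r_i$ and $\sum r_i\le p$. Here a repeated factor is listed repeatedly, and the constant case $n=0$ is included. Substituting Step 1 into each factor and expanding by multilinearity gives
\[
Pf=\sum_{j_1\le r_1,\dots,j_n\le r_n}(X^{(1)}_{j_1}f)\cdots(X^{(n)}_{j_n}f).
\]
Each term has $\sum j_i\le\sum r_i\le p$, so each term is of the required form.

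One point needs care: the definition via $p(L_1f,\dots,L_lf)$ also allows non-homogeneous polynomials. Such a polynomial splits into monomials, and a monomial $(L_1f)^{s_1}\cdots$ is exactly a product as above. A constant term $c$ of the polynomial is the $n=0$ product scaled by $c$. This matches the paper's convention that a function $\psi$ independent of $f$ is an operator of total order zero; if needed, such a term can be treated as $n=1$ with $X=\psi\in\Gamma(\Sym^0TM)$ multiplying $f^0$.

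The only nontrivial input is the existence of the leading symmetric part, which we take from the preceding Remark. The remaining work is bookkeeping: checking that total order does not increase under the expansion.
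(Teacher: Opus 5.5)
Your proof is correct and is essentially the paper's. Both arguments rest only on the existence of the leading symmetric part plus an induction on order. The difference is where the induction sits: you induct on a single linear operator to get $L=\sum_{j\le r}X_j$ and then expand multilinearly, whereas the paper inducts on the total order of the monomial, writing each factor once as $L_j=X_j+R_j$ and pushing every term containing some $R_jf$ into the induction hypothesis.

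One correction to your closing aside: for $X=\psi\in\Gamma(\Sym^0TM)$ one has $Xf=\psi f$, so no $n=1$ factor yields the $f$-independent operator $f\mapsto\psi$. In fact, by homogeneity in $f$, a non-constant $\psi$ is not in the $\mathbb{R}$-span at all unless the empty product may carry a function coefficient. You should simply work with the real-coefficient polynomials of Section~\ref{sec:DiffOp}, where the constant term is a scalar multiple of the $n=0$ product; the paper's own proof also passes over this point.
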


\begin{proof}
We proceed by induction on the total order of $P$.

When the total order is $0$, the operator $Pf$ reduces to a monomial in $f$, which already has the stated form with all $r_i = 0$.

For the inductive step, let $Pf = \prod_{j=1}^l (L_j f)^{s_j}$ be a monomial of total order $s > 0$.
Let $X_j \in \Gamma(\Sym^{r_j} TM)$ be the leading symmetric part of $L_j$, and write $L_j f = X_j f + R_j f$ where $R_j$ plays the role of the remainder and has order strictly less than $r_j$. Substituting and expanding,
\[
    Pf = \prod_{j=1}^l (X_j f)^{s_j} + Qf,
\]
where $Q$ is a linear combination of monomials, each containing at least one factor of $R_j f$ for some $j$.
Any such monomial has total order strictly less than $s$, so by the induction hypothesis $Q$ is already in the span of operators of the desired form. Since $\prod_{j=1}^l (X_j f)^{s_j}$ also has the desired form, the result follows.
\end{proof}


\subsection{Traces and contractions}

The products $(X_1 f)\cdots(X_n f)$ introduced in the previous section are scalar-valued, but they are built from covariant derivatives of $f$, which are covariant tensors. In order to construct scalar-valued operators from those tensors directly we need a way to contract tensor indices against each other, which comes natural by using the Riemannian metric. This section sets up such machinery precisely.

Let $(M,g)$ be a Riemannian manifold. The metric $g$ provides natural isomorphisms between the tangent and cotangent bundles:
\[
    \flat : TM \to T^\ast M, \quad (\flat v)(w) = g(v,w),
    \qquad
    \sharp : T^\ast M \to TM, \quad \text{the inverse of } \flat.
\]
Both extend to tensor products and we use the same symbols $\flat$ and $\sharp$ for the induced isomorphisms on $T^\ast M^{\otimes r}$ and $TM^{\otimes r}$.

The \emph{Riemannian cometric} $g^\ast  \in \Gamma(\Sym^2 TM)$ is the contravariant metric tensor defined by
\[
    g^\ast (\alpha,\beta) := \langle \sharp\alpha,\, \sharp\beta \rangle_g,
    \qquad \alpha,\beta \in T^\ast M.
\]
Since $g^\ast $ takes covectors as inputs, it is a section of $\Sym^2 TM$. In a local coordinate system $(x^1,\dots,x^d)$ with $g_{ij} = \langle \partial_{x^i}, \partial_{x^j}\rangle_g$, the  cometric has components $g^{ij} = \langle dx^i, dx^j\rangle_{g^\ast }$, which are the entries of the inverse matrix $(g_{ij})^{-1}$.

Let $T \in \Gamma(T^\ast M^{\otimes 2})$. The \emph{metric trace} of $T$ is the scalar function obtained by contracting both arguments against the cometric:
\[
    \tr T \;:=\; \sum_{i,j=1}^d g^{ij}\, T(\partial_{x^i}, \partial_{x^j}).
\]
We will use the asterisk-label notation such as illustrated below for these traces
\begin{equation} \label{TrEx}
\begin{aligned}
 \Delta f = \tr \nabla^2 f = \tr \nabla^2_{\ast ,\ast } f. \\
    \|\nabla f\|^2
    = \langle df,\, df \rangle_{g^\ast }
    = \tr\bigl(\nabla_{\ast } f\bigr)\bigl(\nabla_{\ast } f\bigr),\\
    \nabla^2 f(\nabla f,\nabla f)
    = \tr\bigl(\nabla^2_{\ast _1,\ast _2} f\bigr)
      \bigl(\nabla_{\ast _1} f\bigr)
      \bigl(\nabla_{\ast _2} f\bigr)
\end{aligned}
\end{equation}
%
Iterating, one may perform up to $\lfloor r/2 \rfloor$ contractions of an $r$-tensor. When $r$ is even and one contracts all $r/2$ pairs, no free indices remain and the result is a smooth function on $M$.

\section{Differential operators from multigraphs and examples} \label{sec:LandtildeL}
\subsection{From multigraphs to nonlinear differential operators}
We now have all the ingredients to define the central object of this paper:
a scalar-valued nonlinear differential operator $N_\gamma \in \NPDO(M)$
associated to each equivalence class $[\gamma] \in \MG$.
The construction is guided by two simple rules:
each vertex $v \in V[\gamma]$ contributes the symmetrized covariant derivative
$\nabla^{\deg(v),\Sym} f$,
and each edge $e \in E[\gamma]$ contracts a pair of tensor indices via the
Riemannian metric $g$.
We illustrate this with a concrete example before giving the general definition.

\begin{example} \label{ex:PathGraph}
Consider the path multigraph $\gamma$ with three vertices and two edges,
\begin{figure}[H]
\centering
\begin{tikzpicture}[scale=0.9]
  \node[circle, draw, fill=black, inner sep=2pt, label=below:$v_1$] (V1) at (0,0) {};
  \node[circle, draw, fill=black, inner sep=2pt, label=below:$v_2$] (V2) at (1,0) {};
  \node[circle, draw, fill=black, inner sep=2pt, label=below:$v_3$] (V3) at (2,0) {};
  \draw (V1) -- (V2) node[midway, above] {$e_1$};
  \draw (V2) -- (V3) node[midway, above] {$e_2$};
\end{tikzpicture}
\end{figure}
\noindent
where $\deg(v_1)=1$, $\deg(v_2)=2$, $\deg(v_3)=1$.
To each vertex we associate a symmetrized covariant derivative of $f$:
\[
  \alpha^{v_1} = \nabla^{1,\Sym} f = \nabla f,
  \qquad
  \alpha^{v_2} = \nabla^{2,\Sym} f = \nabla^2 f,
  \qquad
  \alpha^{v_3} = \nabla^{1,\Sym} f = \nabla f.
\]
Forming their tensor product gives a covariant $4$-tensor:
\[
  \nabla^{V[\gamma]} f
  \;:=\;
  \alpha^{v_1} \otimes \alpha^{v_2} \otimes \alpha^{v_3}
  \;=\;
  \nabla f \otimes \nabla^2 f \otimes \nabla f
  \;\in\; \Gamma\!\left(T^\ast M^{\otimes 4}\right).
\]
Each edge prescribes one metric contraction between the tensors at its endpoints, with $e_1$ contracting the single slot of $\alpha^{v_1}=\nabla f$ with one slot of $\alpha^{v_2}=\nabla^2 f$ using the cometric $g^\ast $, while edge $e_2$ contracts the remaining slot of $\alpha^{v_2}=\nabla^2 f$ with the single slot of $\alpha^{v_3}=\nabla f$. Since $\nabla^2 f$ is symmetric, the order in which we apply the two contractions does not matter.
After both contractions no free indices remain, and we obtain the scalar function
\[
  N_\gamma f
  \;=\;
  \tr\bigl(\nabla_{\ast_1} f\bigr)\bigl(\nabla^2_{\ast_1,\ast_2} f\bigr)\bigl(\nabla_{\ast_2} f\bigr)
  \;=\;
  \nabla^2 f(\nabla f, \nabla f).
\]
\end{example}

Let $(M,g)$ be a Riemannian manifold with Levi-Civita connection $\nabla$, and let $\gamma$ be a multigraph. Fix any ordering $V[\gamma] = \{v_1, \dots, v_n\}$ of the vertices and write $l_i = \deg(v_i)$. We present now the general procedure for constructing a differential operator associated to a multigraph. For $f \in C^\infty(M)$, associate to each vertex $v_i$ the symmetrized covariant derivative $\nabla^{l_i, \Sym} f \in \Gamma(T^\ast M^{\otimes l_i})$, and form their tensor product
\[
  \nabla^{V[\gamma]} f
  \;:=\;
  \nabla^{l_1,\Sym} f \otimes \cdots \otimes \nabla^{l_n,\Sym} f
  \;\in\; \Gamma\!\left(T^\ast M^{\otimes 2p}\right),
\]
where $p = |E[\gamma]|$ and $\sum_{i=1}^n l_i = 2p$. Recall that if $v$ is an isolated vertex, then $\nabla^{v} f = \nabla^{0}f =f$.
Each edge $e \in E[\gamma]$ specifies a metric contraction between two slots of $\nabla^{V[\gamma]} f$. We distinguish two cases.
\begin{enumerate}[$\bullet$]
  \item \emph{Non-loop edge} $e = \{v_i, v_j\}$ with $v_i \neq v_j$:
        contract one available slot of $\nabla^{l_i,\Sym} f$ against one
        available slot of $\nabla^{l_j,\Sym} f$ using the cometric $g^\ast $.
  \item \emph{Loop} $e = \{v_i, v_i\}$: contract two available slots
        of $\nabla^{l_i,\Sym} f$ against each other using $g^\ast $.
        This is well-defined because $\nabla^{l_i,\Sym} f$ is fully symmetric,
        so the choice of which two slots to contract is immaterial.
\end{enumerate}
Since each $\nabla^{l_i,\Sym} f$ is symmetric, the contractions along different edges commute, so we may write
\[
  N_\gamma = \tr_{E[\gamma]} \nabla^{V[\gamma]} f
  \;:=\;
  \tr_{e_1} \circ \cdots \circ \tr_{e_p} \,\nabla^{V[\gamma]} f
\]
without ambiguity for any ordering $E[\gamma] = \{e_1, \dots, e_p\}$.
After all $p$ contractions the result is scalar-valued.


\begin{remark}
The total number of slots in $\nabla^{V[\gamma]} f$ equals
$\sum_{i=1}^n l_i = 2p$, and each of the $p$ edge-contractions removes two
slots, leaving a scalar. In particular, $N_\gamma$ is well-defined for every
multigraph with vertex degrees bounded by $k$, including those with loops and
multiple edges.
\end{remark}

\begin{proposition} \label{prop:NWellDefined}
The operator $N_\gamma f = \tr_{E[\gamma]} \nabla^{V[\gamma]} f$ satisfies the
following.
\begin{enumerate}[\rm (i)]
  \item It does not depend on the ordering chosen for $E[\gamma]$.
  \item It does not depend on the ordering chosen for $V[\gamma]$.
  \item If $\Phi \colon \gamma \to \tilde\gamma$ is a multigraph isomorphism,
        then $N_\gamma = N_{\tilde\gamma}$.
        In particular, $N_\gamma$ depends only on the isomorphism class
        $[\gamma] \in \MG$.
\end{enumerate}
\end{proposition}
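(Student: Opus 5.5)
The cleanest route is to fix a single coordinate-free description of $N_\gamma f$ at a point and check that it has the required invariances. Fix $x \in M$ and an orthonormal basis $e_1,\dots,e_d$ of $T_xM$. Choose a parametrization $(\rho,m)$ of $\gamma$ as in Section~\ref{sec:UnorderedPairs}, that is, a bijection between $\{1,\dots,2p\}$ and the edge-ends of $\gamma$. We will show that
\[
N_\gamma f(x) = \sum_{a:\{1,\dots,2p\}\to\{1,\dots,d\}} \Bigl(\prod_{\{i,j\}\in\rho} \delta_{a(i)a(j)}\Bigr) \prod_{v\in V[\gamma]} \nabla^{\deg(v),\Sym} f\bigl(e_{a(i)} : i \in m^{-1}(v)\bigr),
\]
where the arguments of each vertex tensor are listed in any order. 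Any order is allowed because $\nabla^{\deg(v),\Sym}f$ is fully symmetric.

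The first step is to verify this formula directly from the definition. Each edge contraction $\tr_e$ is, in an orthonormal frame, a sum over a repeated index placed in one slot at each endpoint of $e$; for a loop, both slots lie in the same vertex tensor. Performing the contractions $\tr_{e_1}\circ\cdots\circ\tr_{e_p}$ in succession introduces one summation index per edge. This is exactly the displayed formula, with $\rho$ recording which pairs of slots share an index. The symmetry of each $\nabla^{l,\Sym}f$ guarantees that the choice of "available slot" in the definition does not affect the value.

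Items (i) and (ii) then follow immediately.

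\emph{Item (i).} Reordering the edges only changes the order of the summations and the order of the factors $\delta_{a(i)a(j)}$ in a finite sum of products of real numbers. These commute, so the value is unchanged.

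\emph{Item (ii).} Reordering the vertices permutes the factors of the tensor product $\nabla^{V[\gamma]}f$. After the contractions, this becomes a permutation of factors in a product of scalars. Equivalently, it amounts to precomposing $m$ with a bijection and relabeling $\rho$ accordingly, which reindexes the sum over $a$ bijectively.

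\emph{Item (iii).} An isomorphism $\Phi$ is a vertex bijection carrying the edge multiset $E[\gamma]$ to $E[\tilde\gamma]$. It therefore preserves degrees, so $\deg\Phi(v) = \deg v$ and the tensors attached to $v$ and $\Phi(v)$ agree. If $(\rho,m)$ parametrizes $\gamma$, then $(\rho,\Phi\circ m)$ parametrizes $\tilde\gamma$, since $\Phi(m(\rho)) = E[\tilde\gamma]$. Substituting into the displayed formula and reindexing the product over $V[\tilde\gamma]$ by $\Phi$ gives $N_{\tilde\gamma}f(x) = N_\gamma f(x)$.

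We also need the formula to be independent of the chosen parametrization. Two parametrizations of the same labeled multigraph differ by a permutation $\sigma \in S_{2p}$: it permutes indices within each fiber of $m$, and it may swap the two index pairs of parallel edges or the two ends of an edge. Under such a $\sigma$ the sum over $a$ is reindexed by $a \mapsto a\circ\sigma$. Permuting within a fiber is harmless by symmetry of the vertex tensors, and swapping edges or edge ends just reorders the product of $\delta$'s. Finally, the formula does not depend on the orthonormal frame, since it equals the coordinate-free trace.

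The main obstacle is the bookkeeping in the first step: making precise that the iterated contraction, with loops and multiple edges, equals the single sum indexed by $\rho$, regardless of which free slots are chosen at each stage. To handle it I would induct on $p$. Contracting one edge $e=\{v_i,v_j\}$ first, the symmetry of $\nabla^{l_i,\Sym}f$ and $\nabla^{l_j,\Sym}f$ makes the partially contracted tensor independent of which slots were used. The remaining contractions are then a contraction pattern of the same type with one fewer edge, to which the induction hypothesis applies.
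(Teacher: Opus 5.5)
Your proof is correct, but it is organized differently from the paper's. The paper argues intrinsically and briefly. For (i) it notes that contractions along different edges act on symmetric slots and so commute. For (ii) it observes that reordering the vertices permutes the tensor factors and the contraction pattern simultaneously, which leaves a complete contraction unchanged. For (iii) it regards an isomorphism as a relabeling of vertices and appeals to (ii).

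You instead first reduce $N_\gamma f(x)$ to one explicit index sum attached to a parametrization $(\rho,m)$. That sum is just $\nabla^{V[\gamma]}f(\sharp T_\rho)$ written in an orthonormal frame, so you are in effect proving part (ii) of Proposition~\ref{prop:NtildeWellDefined} first and reading off (i)--(iii) from it. This buys precision. It separates two facts the paper runs together: contractions along disjoint pairs of slots commute for any tensor, whereas symmetry of the vertex tensors is needed only to make the choice of ``available slot'' irrelevant. Your proof of (iii) via $(\rho,m)\mapsto(\rho,\Phi\circ m)$ and $\deg\Phi(v)=\deg v$ is also more explicit than the paper's. The price is extra bookkeeping: frame independence and independence of the parametrization.

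One thing to tidy is your induction. After contracting an edge between distinct vertices, the partial contraction is no longer a tensor product of vertex tensors. So the inductive hypothesis must be stated for tensors that are merely symmetric within each vertex's block of slots.

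Simpler still, the induction can be dropped. Any sequence of slot choices in the definition is exactly a parametrization. For a fixed one, the index formula holds for an arbitrary tensor and any order of contractions. Your separate check that the formula is independent of the parametrization (permutations within fibers of $m$, by symmetry) then covers all slot choices.
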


\begin{proof}
\textit{(i).}
Fix any ordering $V[\gamma] = \{v_1,\dots,v_n\}$. Since $\nabla^{l_i,\Sym} f$ is a fully symmetric tensor, contracting along two edges $e_1, e_2 \in E[\gamma]$ in either order acts on symmetric slots of $\nabla^{V[\gamma]} f$ and therefore commutes. By induction, the iterated contraction $\tr_{e_1} \circ \cdots \circ \tr_{e_p}$ yields the same scalar for every ordering of $E[\gamma]$. 

\textit{(ii).}
Let $V[\gamma] = \{v_1,\dots,v_n\}$ and $V[\gamma] = \{v_{\sigma(1)},\dots,v_{\sigma(n)}\}$ be two orderings of the nodes, related by a permutation $\sigma \in S_n$ which also induces a reordering of the edges. The two orderings produce tensor products that differ by a permutation of their factors:
\[
  \nabla^{l_{\sigma(1)},\Sym} f \otimes \cdots \otimes \nabla^{l_{\sigma(n)},\Sym} f
  \;=\;
  \pi_\sigma \cdot \bigl(\nabla^{l_1,\Sym} f \otimes \cdots \otimes \nabla^{l_n,\Sym} f\bigr),
\]
where $\pi_\sigma$ permutes the tensor factors in the corresponding blocks of slots. At the same time, each edge $e = \{v_i, v_j\}$ of the new ordering now contracts the slots belonging to $v_{\sigma^{-1}(i)}$ and $v_{\sigma^{-1}(j)}$, i.e., the contraction pattern is permuted by $\pi_\sigma$ in exactly the same way. Since a complete contraction of all $p$ edges is invariant under simultaneous permutation of both the tensor product and the contraction pattern, the result $\tr_{E[\gamma]} \nabla^{V[\gamma]} f$ is unchanged.

\textit{(iii).}
Let $\Phi \colon \gamma \to \tilde\gamma$ be a graph isomorphism. It suffices to remark that any graph isomorphism is a reordering of the vertices, which in turn induces a reordering of the edges, to conclude that by (ii), the result is independent of vertex ordering, so
\[
  N_{\tilde\gamma} f
  \;=\;
  \tr_{E[\tilde\gamma]} \tilde\nabla^{V[\tilde\gamma]} f
  \;=\;
  \tr_{E[\gamma]} \nabla^{V[\gamma]} f
  \;=\;
  N_\gamma f. \qedhere
\]
\end{proof}

%

\subsection{From parametrizations to nonlinear differential operators} Recall from Section~\ref{sec:Tensors} the $\Ort(d)$-invariant tensors $\tau_\rho \in \calT_d^{2p}$ on $\mathbb{R}^d$ defined for $\rho \in \frakR_p$. Their manifold analogue is defined as follows. For $\rho = \{\{i_1,i_2\},\dots,\{i_{2p-1},i_{2p}\}\} \in \frakR_p$, define $T_\rho \in \Gamma(T^\ast M^{\otimes 2p})$ by
\[
  T_\rho(\xi_1,\dots,\xi_{2p})
  \; :=\;
  \prod_{\{i,j\}\in\rho} \langle \xi_i, \xi_j \rangle_g,
  \qquad \xi_1,\dots,\xi_{2p} \in TM.
\]
\noindent In the above definition, we adopt the convention that $T_\emptyset =1$ for the case $p =0$. Now let $\beta \in \DV$ be a degree vector with $|\beta|_E = p$. Recall that the canonical map $m_{\beta} \colon \{1,\dots,2p\} \to V_{\beta}$ partitions the indices $\{1,\dots,2p\}$ into consecutive fibers, one per vertex, ordered by non-decreasing degree. Define the tensor
\begin{equation} \label{nablavecm}
  \nabla^{\beta} f
  \;:=\;
  \bigotimes_{j=0}^k \bigl(\nabla^{j,\Sym} f\bigr)^{\otimes \beta(j)}
  \;\in\; \Gamma\!\left(T^\ast M^{\otimes 2p}\right),
\end{equation}
where factors with $\beta(j)=0$ are omitted. If $\beta = 0$, then we adopt the convention that $\nabla^{\beta}f =1$. It will also be convenient to write $\nabla^{\vec{\beta}} := \nabla^{(0,\vec{\beta})}$.

\begin{definition}
For $\rho \in \frakR_p$, a degree vector $\beta = (\beta_0, \vec{\beta})$ with $|\beta|_E = p$, define
\[
  \tilde N(\rho,\beta)\,f = \tilde N(\rho, \beta_0, \vec{\beta}) \;:=\; \nabla^{\beta} f\,(\sharp T_\rho) = f^{\beta_0} \nabla^{\vec{\beta}} f\,(\sharp T_\rho),
\]
\end{definition}

The contravariant tensor $\sharp T_\rho$ acts as a \emph{contraction instruction}: for each pair ${\{i,j\}\in\rho}$ it contracts slot $i$ against slot $j$ using the cometric $g^\ast $, i.e.\ it inserts a factor $\langle \cdot\,, \cdot \rangle_{g^\ast }$ between the corresponding arguments. Since $\rho$ is a perfect matching on $\{1,\dots,2p\}$, every slot is contracted exactly once and the result is a scalar. 

As an example, for $\rho = \{\{1,3\},\{2,4\}\}$ and $\vec{\beta} = [2,1]$ (two degree-$1$ vertices and one degree-$2$ vertex), one has that $\nabla^{\vec{\beta}} f = \nabla f \otimes \nabla f \otimes \nabla^2 f$, and then
\[
  \tilde N\!\bigl(\{\{1,3\},\{2,4\}\},[2,1]\bigr)\,f
  \;=\;
  \nabla^2 f(\nabla f,\nabla f),
\]
recovering the operator from Example~\ref{ex:PathGraph}.

\subsection{Relation to multigraphs} The parametric formula and the graph-theoretic definition are related as follows.

\begin{proposition} \label{prop:NtildeWellDefined}
Let $\beta = (\beta_0, \vec{\beta})$ be a degree vector with $|\beta|_E = p$.
\begin{enumerate}[\rm (i)]
  \item For any $\sigma \in S_{\vec{\beta}}$, one has
        $\tilde N(\sigma\cdot\rho,\beta) = \tilde N(\rho,\beta)$.
  \item If $[\gamma] = \Gamma(\rho,\beta)$, then
        $\tilde N(\rho,\beta) = N_\gamma$.
\end{enumerate}
\end{proposition}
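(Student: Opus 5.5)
The plan is to work pointwise: fix $x \in M$, choose an orthonormal basis of $T_xM$, and write $\nabla^{\beta}f$ in components. By \eqref{nablavecm}, $\nabla^\beta f$ is the tensor product of the symmetric tensors $\nabla^{l_I,\Sym}f$, one for each vertex $v_I$ of $V_\beta$, placed in the consecutive slot blocks $m_\beta^{-1}(v_I)$. The contraction against $\sharp T_\rho$ then becomes
\[
\tilde N(\rho,\beta)f(x) = \sum_{a:\{1,\dots,2p\}\to\{1,\dots,d\},\ a(i)=a(j) \text{ for } \{i,j\}\in\rho} (\nabla^\beta f)_{a(1)\cdots a(2p)} ,
\]
where the isolated vertices contribute the factor $f^{\beta_0}$.

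For (i), it is enough to treat the generators of $S_{\vec\beta}$, since the statement is closed under composition. Note that $T_{\sigma\cdot\rho} = \sigma\cdot T_\rho$ in the sense of Remark~\ref{re:averageVecm}. Relabelling the summation index by $a\mapsto a\circ\sigma$ therefore gives
\[
\tilde N(\sigma\cdot\rho,\beta)f = (\sigma^{-1}\cdot \nabla^\beta f)(\sharp T_\rho),
\]
up to the direction convention for $\sigma$, which does not matter because the generators form a group. It then suffices to show that $\nabla^\beta f$ is fixed by $S_{\vec\beta}$.
\begin{enumerate}[$\bullet$]
\item For $\sigma\in S_{0,\beta}$ this holds because each factor $\nabla^{l_I,\Sym}f$ is fully symmetric.
\item For a block swap $\sigma_{I_1I_2}$, the two exchanged blocks carry the identical tensor $\nabla^{l,\Sym}f$ (same degree $l$). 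Swapping two equal tensor factors while preserving the order within each block leaves the full tensor product unchanged.
\end{enumerate}
The only delicate point is the inverse/direction bookkeeping, and invariance under the whole group makes it harmless.

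For (ii), first reduce to the labelled graph $\tilde\Gamma(\rho,\beta)$. By Proposition~\ref{prop:NWellDefined}(iii), $N_\gamma = N_{\tilde\Gamma(\rho,\beta)}$ whenever $[\gamma]=\Gamma(\rho,\beta)$. Next, order the vertices of $\tilde\Gamma(\rho,\beta)$ as $v_1,\dots,v_n$ as in $V_\beta$; Proposition~\ref{prop:NWellDefined}(ii) allows this choice. Then $\nabla^{V[\gamma]}f$ coincides with $\nabla^\beta f$ slot by slot, and the edge-ends at $v_I$ are identified with the indices in $m_\beta^{-1}(v_I)$.

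Each edge $\{m_\beta(i),m_\beta(j)\}$ with $\{i,j\}\in\rho$ prescribes a contraction of one slot in block $I$ against one slot in block $J$, or two slots of the same block for a loop. The definition of $N_\gamma$ lets us choose which available slots to use, because each factor is symmetric. Choose them to be exactly slot $i$ and slot $j$. Since $\rho$ is a perfect matching, these choices use every slot exactly once, so the choice is consistent. Performing the $p$ contractions in this way is precisely evaluation on $\sharp T_\rho$, which yields $\tilde N(\rho,\beta)f$.

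The main obstacle is to justify rigorously that this particular slot assignment agrees with the "any available slot" prescription in the definition of $N_\gamma$. The symmetry of the factors $\nabla^{l_I,\Sym}f$ makes all such assignments equal: within each block, one assignment is carried to another by a permutation of slots, that is, by an element of $S_{0,\beta}$. This reduces the issue to part (i), which I would apply again here.
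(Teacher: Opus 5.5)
Your proposal is correct and follows essentially the same route as the paper: part (i) is checked on the generators, using the full symmetry of each $\nabla^{l_I,\Sym}f$ for $S_{0,\vec{\beta}}$ and the exchange of identical tensor factors for the block swaps $\sigma_{I_1I_2}$, and part (ii) identifies the contraction along each pair $\{i,j\}\in\rho$ with the edge contraction in $\tilde\Gamma(\rho,\beta)$. Your extra step, which resolves the ``any available slot'' freedom in the definition of $N_\gamma$ through $S_{0,\vec{\beta}}$-invariance from part (i), makes explicit what the paper leaves implicit when it says ``unfolding the definition of $\tr_{E[\gamma]}$''.
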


\begin{proof}
\textit{(i).}
Recall that $S_{\vec{\beta}}$ is generated by two subgroups, $S_{0,\vec{\beta}}$ and $S_{1,\vec{\beta}}$ (Section~\ref{sec:UnorderedPairs}), so it suffices to verify invariance for each generator separately. For $\sigma \in S_{0,\vec{\beta}}$, $\sigma$ permutes indices within the fiber $(m_{\beta})^{-1}(v_I)$ for each vertex $v_I$, leaving every other index fixed. Since $\nabla^{l_I,\Sym} f$ is fully symmetric, the value $\nabla^{l_I,\Sym} f(\xi_{a_1},\dots,\xi_{a_{l_I}})$ is unchanged when the arguments are permuted within the fiber. Therefore $\nabla^{\beta} f(\sharp T_{\sigma\cdot\rho}) = \nabla^{\beta} f(\sharp T_\rho)$, i.e., $\tilde N(\sigma\cdot\rho,\beta) = \tilde N(\rho,\beta)$. For $\sigma = \sigma_{I_1 I_2} \in S_{1,\vec{\beta}}$, $\sigma$ swaps the fibers of two vertices $v_{I_1}$ and $v_{I_2}$ of the same degree $l$. In $\nabla^{\beta} f$, both $v_{I_1}$ and $v_{I_2}$ contribute a factor of $\nabla^{l,\Sym} f$. Swapping the corresponding blocks of indices in $\rho$ is equivalent to swapping the two identical tensor factors $\nabla^{l,\Sym} f$, which leaves their tensor product unchanged. Hence $\tilde N(\sigma\cdot\rho,\beta) = \tilde N(\rho,\beta)$.

\textit{(ii).}
By definition, $\Gamma(\rho,\beta)$ is the isomorphism class of $\tilde\Gamma(\rho,\beta)$, whose non-isolated vertices are $V_{\beta} = \{v_1,\dots,v_n\}$ in canonical order. By construction of $m_{\beta}$, the fiber $m_{\beta}^{-1}(v_I)$ consists of the $l_I$ indices assigned to $v_I$, and $\rho$ pairs these indices into edges. Unfolding the definition of $\tr_{E[\gamma]}$, each pair $\{i,j\} \in \rho$ contracts the slot indexed by $i$ (belonging to $v_{I_i}$) against the slot indexed by $j$ (belonging to $v_{I_j}$) using $g^\ast $, which is exactly what $\nabla^{\beta} f(\sharp T_\rho)$ computes, giving $\tilde N(\rho,\beta) f =  N_{\tilde\Gamma(\rho,\beta)} f = N_\gamma f$.
\end{proof}

\begin{proposition} \label{prop:NEquivariant}
For any multigraph $\gamma \in \MG$ and any isometry $\varphi$ of $(M,g)$,
\[
  N_\gamma(f \circ \varphi) \;=\; (N_\gamma f) \circ \varphi
  \qquad \text{for all } f \in C^\infty(M).
\]
In other words, $N_\gamma \in \NPDO(M)^G$ for every $[\gamma] \in \MG$.
\end{proposition}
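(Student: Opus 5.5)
By Proposition~\ref{prop:ParamSurj} and Proposition~\ref{prop:NtildeWellDefined}(ii), we may write $[\gamma] = \Gamma(\rho,\beta)$ and $N_\gamma = \tilde N(\rho,\beta)$ for some $\rho \in \frakR_p$ and degree vector $\beta$ with $|\beta|_E = p$. It therefore suffices to prove that
\[
\tilde N(\rho,\beta)(f\circ\varphi) = \tilde N(\rho,\beta)f \circ \varphi, \qquad \tilde N(\rho,\beta) f = \nabla^\beta f(\sharp T_\rho).
\]
The plan has two ingredients: each factor of $\nabla^\beta f$ transforms naturally under $\varphi$, and the contraction tensor $\sharp T_\rho$ is preserved by $\varphi_*$.

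\emph{Step one: naturality of $\nabla^\beta$.} By \eqref{EquaNabla}, applied factor by factor, for every $j$ and every multi-vector field $X$ we have
\[
\nabla^{j,\Sym}(f\circ\varphi)(X) = \nabla^{j,\Sym} f(\varphi_* X)\circ\varphi .
\]
For $j=0$ this is just $f\circ\varphi$. The tensor $\nabla^\beta f$ is a tensor product of such factors, and $\varphi_*$ on $TM^{\otimes 2p}$ is the tensor product of $\varphi_*$ on each slot. Evaluating on decomposable multi-vector fields and extending by $C^\infty$-linearity (the identity is pointwise-linear in $X$) therefore gives
\[
\nabla^\beta(f\circ\varphi)(X) = \nabla^\beta f(\varphi_* X)\circ\varphi \qquad \text{for all } X \in \Gamma(TM^{\otimes 2p}).
\]
This is the analogue of \eqref{Xvarphistar} for $\nabla^\beta$.

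\emph{Step two: invariance of $\sharp T_\rho$.} Since $\varphi$ is an isometry, $\langle \varphi_*\xi_i,\varphi_*\xi_j\rangle_g = \langle \xi_i,\xi_j\rangle_g$ at corresponding points, so $T_\rho$ is $\varphi$-invariant. Because $\varphi_*$ commutes with $\sharp$ for isometries, this gives $\varphi_*(\sharp T_\rho) = \sharp T_\rho$ as sections. Concretely, in an orthonormal frame $e_1,\dots,e_d$ at $x$,
\[
\sharp T_\rho = \sum_{a} \bigotimes_{\{i,j\}\in\rho} e_{a_{ij}}\otimes e_{a_{ij}}
\]
(placed in slots $i$ and $j$), and $\varphi_*$ maps an orthonormal frame at $x$ to an orthonormal frame at $\varphi(x)$. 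The expression is frame independent, so the pushforward of $\sharp T_\rho|_x$ equals $\sharp T_\rho|_{\varphi(x)}$.

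Combining the two steps with $X = \sharp T_\rho$ yields
\[
\tilde N(\rho,\beta)(f\circ\varphi) = \nabla^\beta f(\varphi_*\sharp T_\rho)\circ\varphi = \nabla^\beta f(\sharp T_\rho)\circ\varphi ,
\]
which is the claim. The degenerate case $\beta = 0$ is trivial, since $N_{\bullet^0} = 1$.

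\emph{Main obstacle.} The delicate point is Step two, and specifically the meaning of $\varphi_* X$ for $X$ a tensor field: by definition $\varphi_* X$ at $x$ is the pushforward of $X(\varphi^{-1}(x))$. We must therefore check that $\sharp T_\rho$ is carried to itself pointwise, which is exactly what the frame-independence argument establishes. A secondary point is the extension of \eqref{EquaNabla} from decomposable arguments to general $X\in\Gamma(TM^{\otimes 2p})$. This holds because both sides are tensorial, i.e.\ $C^\infty(M)$-linear in $X$, and for a function $h$ we have $\varphi_*(hX) = (h\circ\varphi^{-1})\varphi_*X$, so the composition with $\varphi$ on the right-hand side matches correctly.
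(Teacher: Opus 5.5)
Your proof is correct and is essentially the paper's argument: both apply the naturality identity \eqref{EquaNabla} to each vertex factor and then use that metric contractions are unchanged by $\varphi_*$ because $\varphi$ is an isometry. The only difference is presentation: you work through the parametrized form $\tilde N(\rho,\beta)f = \nabla^\beta f(\sharp T_\rho)$ and state the paper's ``each contraction is unaffected by $\varphi_*$'' step precisely as $\varphi_*(\sharp T_\rho) = \sharp T_\rho$, which is slightly more careful than the paper's wording.
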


\begin{proof}
Write $V[\gamma] = \{v_1,\dots,v_n\}$ with $l_i = \deg(v_i)$ and let $Y_1,\dots,Y_n$ be arbitrary vector fields on $M$. By equation~\eqref{EquaNabla}, for each vertex $v_i$,
\[
  \nabla^{l_i,\Sym}(f \circ \varphi)(Y_1,\dots,Y_{l_i})
  \;=\;
  \bigl(\nabla^{l_i,\Sym} f\bigr)(\varphi_\ast  Y_1,\dots,\varphi_\ast  Y_{l_i}) \circ \varphi.
\]
Taking the tensor product over all vertices and applying all edge contractions, the pushforward $\varphi_\ast $ appears in each slot and is then immediately contracted against itself via the metric $g$.
Since $\varphi$ is an isometry, $g(\varphi_\ast  X, \varphi_\ast  Y) = g(X,Y)$, so each contraction is unaffected by $\varphi_\ast $. Therefore
\[
  N_\gamma(f \circ \varphi)
  \;=\;
  \tr_{E[\gamma]} \nabla^{V[\gamma]}(f \circ \varphi)
  \;=\;
  \bigl(\tr_{E[\gamma]} \nabla^{V[\gamma]} f\bigr) \circ \varphi
  \;=\;
  (N_\gamma f) \circ \varphi. \qedhere
\]
\end{proof}

\subsection{Further examples} \label{sec:Examples}

We collect here several concrete examples illustrating how multigraphs encode nonlinear differential operators. The key structural rule, already noted in the introduction, is that disjoint unions factor:
\[
  N_{\gamma_1 \cup \gamma_2} f \;=\; (N_{\gamma_1} f)(N_{\gamma_2} f),
\]
so it suffices to understand connected multigraphs. A complete list of $N_\gamma$ for all multigraphs with at most three edges is given in the appendix (Table~1).

\begin{example}[Isolated vertices and the trivial operators]
Let $\gamma = \bullet^{\beta_0}$ consist of $\beta_0$ isolated vertices and no edges. There are no contractions to perform and each isolated vertex contributes a factor of $f$, so
\[
  N_{\bullet^{\beta_0}} f \;=\; f^{\beta_0}.
\]
In particular the null graph gives the constant $N_{\bullet^0} f = 1$, and a single isolated vertex gives $N_{\bullet} f = f$.
\end{example}

\begin{example}[Single edge: squared gradient norm]
\label{ex:SingleEdge}
Let $\gamma$ be the multigraph with two vertices $v_1, v_2$ of degree $1$
connected by a single edge (
\begin{tikzpicture}[scale=0.3]
  \node[circle, draw, fill=black, inner sep=1.5pt] (A) at (0,0) {};
  \node[circle, draw, fill=black, inner sep=1.5pt] (B) at (2,0) {};
  \draw (A) to (B);
\end{tikzpicture}
). Then $\nabla^V f = \nabla f \otimes \nabla f$, and
the edge contracts both slots via the cometric $g^*$:

\[
  N_\gamma f \;=\; \langle \nabla f,\, \nabla f \rangle_g \;=\; \lVert\nabla f\rVert^2.
\]
\end{example}

\begin{example}[Single loop: Laplacian]
\label{ex:SingleLoop}
Let $\gamma$ be the multigraph with a single vertex $v$ of degree $2$ and a loop $e = \{v,v\}$ (
\begin{tikzpicture}[scale=0.3]
  \node[circle, draw, fill=black, inner sep=1.5pt] (A) at (0,0) {};
  \draw (A) to[out=60,in=120,loop] ();
\end{tikzpicture}
). Then $\nabla^V f = \nabla^{2,\Sym} f = \nabla^2 f$, and the loop contracts both slots of $\nabla^2 f$ against each other:
\[
  N_\gamma f \;=\; \tr \nabla^2 f \;=\; \Delta f.
\]
\end{example}

\begin{example}[Products of simpler operators]
By the disjoint-union rule, Examples~\ref{ex:SingleEdge} and~\ref{ex:SingleLoop} combine immediately. Taking $\gamma$ to be the disjoint union of the single-edge and single-loop multigraphs gives
\[
  N_\gamma f \;=\; \|\nabla f\|^2 \cdot \Delta f.
\]
More generally, adding $\beta_0$ isolated vertices yields
$N_\gamma f = f^{\beta_0} \cdot \|\nabla f\|^2 \cdot \Delta f$.
\end{example}

\begin{example}[Double edge]
\label{ex:DoubleEdge}
Let $\gamma$ have two vertices $v_1, v_2$ each of degree $2$, connected by two parallel edges (
\begin{tikzpicture}[scale=0.3]
  \node[circle, draw, fill=black, inner sep=1.5pt] (A) at (0,0) {};
  \node[circle, draw, fill=black, inner sep=1.5pt] (B) at (2,0) {};
  \draw (A) to[bend left=20] (B);
  \draw (A) to[bend right=20] (B);
\end{tikzpicture}
). Then $\nabla^V f = \nabla^2 f \otimes \nabla^2 f$, and the two edges contract the two slots of the first $\nabla^2 f$ against the two slots of the second:
\[
  N_\gamma f
  \;=\; \sum_{i,j,k,l} g^{ik} g^{jl}\,(\nabla^2 f)_{ij}\,(\nabla^2 f)_{kl}
  \;=\; \|\nabla^2 f\|^2.
\]
\end{example}

\begin{example}[Triangle]
\label{ex:Triangle}
Let $\gamma = K_3$ be the complete graph on three vertices $v_1, v_2, v_3$, each of degree $2$, with one edge between every pair. Then $\nabla^V f = \nabla^2 f \otimes \nabla^2 f \otimes \nabla^2 f$, and the three edges contract the slots in a cycle:
\[
  N_{K_3} f \;=\; \tr (\nabla^2 f_{*_1,*_2})(\nabla^2 f_{*_1,*_3})(\nabla^2 f_{*_2,*_3})
\]
\end{example}

\begin{example}[Loop with a pendant edge]
\label{ex:LoopEdge}
Let $\gamma$ be the multigraph with two vertices $v_0, v_1$, a loop $\{v_0,v_0\}$ and a single edge $\{v_0,v_1\}$
(\begin{tikzpicture}[scale=0.3, baseline=-2pt]
  \node[circle, draw, fill=black, inner sep=1.5pt] (A) at (0,0) {};
  \node[circle, draw, fill=black, inner sep=1.5pt] (B) at (2,0) {};
  \draw (A) to[out=60,in=120,loop] ();
  \draw (A) -- (B);
\end{tikzpicture}).
The vertex degrees are $\deg(v_0)=3$ and $\deg(v_1)=1$, so $\nabla^V f = \nabla^{3,\Sym}f \otimes \nabla f$. The loop contracts two of the three slots of $\nabla^{3,\Sym}f$, leaving a $1$-tensor; the edge then pairs that $1$-tensor with $\nabla f$:
\[
  N_\gamma f
  \;=\; \bigl\langle \operatorname{tr}\nabla^{3,\Sym}_{*,*, \cdot } f,\; \nabla f \bigr\rangle_{g^\ast} = \tr \nabla^{3,\Sym}_{*_1,*_1,*_2} f \nabla_{*_2}f.
\]
Using \eqref{3Sym}, we have
\begin{align*}
N_{K_3}f
& = \tr (\nabla_{*_1, *_2, *_2}^3 f)(\nabla_{*_1}f)  + \frac{2}{3} \tr (\nabla_{R(*_2, *_1)*_2}f)(\nabla_{*_1}f) \\
& = \langle \nabla \Delta f, \nabla f \rangle_{g^\ast} - \frac{2}{3} \operatorname{Ric}(\nabla f, \nabla f).
\end{align*}
For the case of $M = \mathbb{R}^d$ with an Euclidean metric, then $N_\gamma f = \langle \nabla \Delta f, \nabla f \rangle_g$, but we have the same equality on any Riemannian manifold up to operators of lower total order. On a Riemannian manifold of constant sectional curvature $K$, then
$$N_\gamma f = \langle \nabla \Delta f, \nabla f \rangle_{g^\ast} - \frac{2(d-1)K}{3} \|\nabla f \|^2,$$
where the last operator is in Example~\ref{ex:SingleEdge}. 
\end{example}

\begin{remark}
Up to lower-order terms, the contraction pattern of $N_\gamma$ is the same on any Riemannian manifold as on $\R^d$. For constant curvature spaces, these lower order terms can always be described by images of multigraphs by fewer edges by Theorem~\ref{th:main}. In particular, when listing operators in $\NPDO^p(M)$ up to a given total order $p$, it is sufficient to include terms of maximal total order for every multigraph.
\end{remark}



For multigraphs with many parallel edges or loops, the \emph{weighted-graph notation} is more compact: an integer node weight records the number of loops at a vertex and an integer edge weight records the multiplicity of the edge. 

\begin{figure}[h]
\centering
\begin{tikzpicture}[scale=0.7]

\begin{scope}[xshift=0cm]
  \node[circle, draw, fill=black, inner sep=2pt] (A1) at (0,0) {};
  \node[circle, draw, fill=black, inner sep=2pt] (B1) at (1,0) {};
  \node[circle, draw, fill=black, inner sep=2pt] (C1) at (2,0) {};

  \draw (A1) to[bend left=12] (B1);
  \draw (A1) to[bend right=12] (B1);
  \draw (B1) to (C1);
  \draw (A1) to[out=60,in=120,loop] ();

  \node at (1,-0.8) {(a)};
\end{scope}

\begin{scope}[xshift=5cm]
  \node[circle, draw, fill=black, inner sep=2pt, label=$2$] (A2) at (0,0) {};
  \node[circle, draw, fill=black, inner sep=2pt, label=$0$] (B2) at (1,0) {};
  \node[circle, draw, fill=black, inner sep=2pt, label=$0$] (C2) at (2,0) {};

  \draw (A2) -- (B2) node [midway, below] {$2$};
  \draw (B2) -- (C2) node [midway, below] {$1$};

  \node at (1,-0.8) {(b)};
\end{scope}

\end{tikzpicture}
\caption{Two alternative representations of the same differential operator $\tr (\nabla^{4,\Sym}_{\ast_0,\ast_0,\ast_1,\ast_2} f)(\nabla^{3,\Sym}_{\ast_1,\ast_2,\ast_3}f)(\nabla_{\ast_3} f )$ as a multigraph (a) and as a weighted graph~(b).}
\end{figure}

\begin{example}[Star multigraph]
Let $\gamma$ be the star multigraph with central vertex $v_4$ and three leaf vertices $v_1, v_2, v_3$. We allow $n_j \geq 0$ loops at vertex $v_j$ and $e_j \geq 1$ edges between $v_4$ and $v_j$, for $j=1,2,3$.
In the weighted-graph representation this is depicted as:
\begin{figure}[H]
    \centering
\begin{tikzpicture}[scale=1.1]
  \node[circle, draw, fill=black, inner sep=2pt, label=left:{$2n_1$}] (A) at (0,0) {};
  \node[circle, draw, fill=black, inner sep=2pt, label=below:{$2n_4$}] (B) at (0.8,0) {};
  \node[circle, draw, fill=black, inner sep=2pt, label=right:{$2n_2$}] (C) at (1.6,0) {};
  \node[circle, draw, fill=black, inner sep=2pt, label=above:{$2n_3$}] (D) at (0.8,0.8) {};

  \draw (A) -- (B) node [midway, above] {$e_1$};
  \draw (B) -- (C) node [midway, below] {$e_2$};
  \draw (B) -- (D) node [midway, right] {$e_3$};
\end{tikzpicture}
    \caption{Weighted star with four vertices. Node labels $n_j \geq 0$ denote the number of loops at vertex $v_j$; edge labels $e_j \geq 1$ denote the number of edges between the center $v_4$ and leaf $v_j$.}
\end{figure}

The degree of the central vertex is $\deg(v_4) = 2n_4 + e_1 + e_2 + e_3$, and the degree of leaf $v_j$ is $\deg(v_j) = 2n_j + e_j$, for $j = 1,2,3$. We can contract the corresponding indices to obtain the operator
\[
N_\gamma f = \left\langle (\operatorname{tr}_{*,*})^{n_4}\nabla^{2n_4+e_1+e_2+e_3,\Sym}f,\; \bigotimes_{j=1}^3 (\operatorname{tr}_{*,*})^{n_j}\nabla^{2n_j+e_j,\Sym}f \right\rangle_{g^\ast}
\]
which simplifies in the flat case as
\[
N_\gamma f = \left\langle \nabla^{e_1+e_2+e_3,\Sym}\Delta^{n_4}f,\; \bigotimes_{j=1}^3 \nabla^{e_j,\Sym}\Delta^{n_j}f \right\rangle_{g^\ast}
\]
As the symmetry group is $S_3$, which corresponds to changing the ordering of the leaves, we can render the encoding unique by imposing the condition $n_1\geq n_2 \geq n_3$ and if $n_1=n_2$ then $e_1\geq e_2$ and if  $n_2=n_3$ then $e_2\geq e_3$. In this way the representative within the symmetry group is uniquely determined.
\end{example}

\section{Frame bundles, associated functions and equivariance}\label{sec:FrameBundles}
\subsection{Nonlinear operators as sections} 
Let $\beta: \mathbb{Z}_{\geq 0} \to \mathbb{Z}_{\geq 0}$ be a function with only finitely many values being nonzero. We call such functions \emph{pseudo-degree vectors}, since we do not require that $|\vec{\beta}|_E$ is an integer. We will use the same notation otherwise for pseudo-degree vectors as for degree vectors. Write $\hDV$ for the collection of all pseudo-degree vectors. For any function $f \in C^\infty$, define $\nabla^{\beta}f$ as in \eqref{nablavecm}.

For any $\beta = (\beta_0, \vec{\beta}) \in \hDV$ with $|\beta|_{E} = \frac{p}{2}$, define $S_{\beta}\subset S_p$ be the subset of permutation defined as for degree vectors in Section~\ref{sec:UnorderedPairs}. If we define the action of $S_p$ on $TM^{\otimes p}$ by $\sigma \cdot (\xi_1 \otimes \cdots \otimes \xi_p) = \xi_{\sigma(1)} \otimes \cdots \otimes \xi_{\sigma(p)}$, we then let $\Sym^{\vec{\beta}} TM$ denote the subbundle of elements that are invariant under the action of $S_{\vec{\beta}}$. Define $\Sym^{\vec{\beta}} T^*M$ analogously.
\begin{lemma} \label{lemma:Pm0m}
For any $P \in \NPDO$, and for all $\beta = (\beta_0, \vec{\beta}) \in \hDV$, there is a unique section $P^\beta \in \Gamma(\Sym^{\vec{\beta}}  TM)$ such that
$$Pf = \sum_{\beta \in \hDV} \nabla^{\beta} f(P^{\beta}), \qquad f \in C^\infty(M),$$
with only finitely elements in the sum above being non-zero.
\end{lemma}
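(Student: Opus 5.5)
Existence and uniqueness will be handled separately; in both, $\nabla^{\beta}f(P^\beta)$ is read as the full contraction of the covariant $|\vec\beta|$-tensor $\nabla^{\vec\beta}f$ (times $f^{\beta_0}$) against a contravariant tensor of the same rank, where $|\vec\beta| = \sum_{j\geq1} j\beta(j)$ is the number of slots.

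\emph{Existence.} By Proposition~\ref{prop:SpanNPDO} it suffices, by linearity, to treat a single product $Pf=(X_1f)\cdots(X_nf)$ with $X_i\in\Gamma(\Sym^{r_i}TM)$. Reorder the factors so that $r_1\le\cdots\le r_n$. This reordering does not change $Pf$, since the values are scalars. Let $\beta(j)=\#\{i: r_i=j\}$. Then
\[
Pf = f^{\beta_0}\,\bigl(\nabla^{r_{\beta_0+1},\Sym}f\otimes\cdots\otimes\nabla^{r_n,\Sym}f\bigr)(X_{\beta_0+1}\otimes\cdots\otimes X_n)=\nabla^{\beta}f(X),
\]
where $X = X_{\beta_0+1}\otimes\cdots\otimes X_n$. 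The tensor $X$ need not lie in $\Sym^{\vec\beta}TM$, but $\nabla^{\beta}f$ is invariant under $S_{\vec\beta}$. Indeed, permuting slots inside one block uses the symmetry of $\nabla^{j,\Sym}f$, and swapping two equal-degree blocks swaps identical factors, exactly as in Proposition~\ref{prop:NtildeWellDefined}(i). Consequently we may replace $X$ by its average $\frac{1}{|S_{\vec\beta}|}\sum_{\sigma\in S_{\vec\beta}}\sigma\cdot X$ without changing the value. Summing over the finitely many products gives the sections $P^\beta$, and all but finitely many of them vanish.

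\emph{Uniqueness.} We must show that if $\sum_\beta \nabla^\beta f(P^\beta)=0$ for all $f$, with each $P^\beta\in\Gamma(\Sym^{\vec\beta}TM)$ and finitely many nonzero, then every $P^\beta=0$. The idea is to work pointwise and use the jet-surjectivity already noted in the Remark after \eqref{XSym}. At a fixed $x\in M$, the map $f\mapsto(\nabla^{j,\Sym}f(x))_{0\le j\le k}$ is surjective onto $\R\oplus\bigoplus_{j=1}^k\Sym^jT_x^*M$; one can realize any prescribed finite collection by a polynomial in normal coordinates, corrected inductively in $j$ since lower-order Christoffel contributions only involve lower symmetric derivatives. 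Choosing $f$ with prescribed symmetric derivatives $a_0\in\R$ and $\alpha_j\in\Sym^jT^*_xM$ turns the identity into the polynomial identity
\[
\sum_\beta a_0^{\beta_0}\,\bigl(\alpha_1^{\otimes\beta(1)}\otimes\cdots\otimes\alpha_k^{\otimes\beta(k)}\bigr)(P^\beta(x))=0
\]
in the independent variables $(a_0,\alpha_1,\dots,\alpha_k)$. The term indexed by $\beta$ is multihomogeneous of degree $\beta(j)$ in $\alpha_j$ and of degree $\beta_0$ in $a_0$. Distinct $\beta$ therefore give distinct multidegrees, so each term must vanish separately.

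The remaining, and main, step is to show that for a fixed $\beta$ the vanishing of $\bigl(\alpha_1^{\otimes\beta(1)}\otimes\cdots\otimes\alpha_k^{\otimes\beta(k)}\bigr)(P^\beta(x))$ for all $\alpha_j\in\Sym^j$ forces $P^\beta(x)=0$ when $P^\beta(x)\in\Sym^{\vec\beta}T_xM$. The plan is to use the description in Remark~\ref{re:averageVecm}: the dual space $\Sym^{\vec\beta}T_x^*M$ is spanned by exactly the elements $\alpha_1^{\otimes\beta(1)}\otimes\cdots\otimes\alpha_k^{\otimes\beta(k)}$. This is a polarization argument, since the symmetric powers $\alpha^{\otimes m}$ span the $S_m$-invariant part of $(\Sym^j)^{\otimes m}$, and within each block the $S_j$-symmetry makes each slot group a copy of $\Sym^j$. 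On the other hand, the natural pairing between $\Sym^{\vec\beta}TM$ and $\Sym^{\vec\beta}T^*M$ is nondegenerate, because both are the images of the same averaging projection, which is self-adjoint for the pairing. Together these show that $P^\beta(x)$ pairs to zero with a spanning set of a space that pairs nondegenerately with it, so $P^\beta(x)=0$. I expect the spanning claim and the jet-surjectivity for non-flat $\nabla$ to be the points needing the most care.
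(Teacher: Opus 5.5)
Your proof is correct and follows the paper's argument: existence by reordering the factors from Proposition~\ref{prop:SpanNPDO} and averaging over $S_{\vec\beta}$, where you also need to absorb the function coefficients $X_1\cdots X_{\beta_0}$ of the order-zero factors into $X$, and uniqueness by realizing the values of $\nabla^{\beta}f(x)$ pointwise. Your uniqueness step is more careful than the paper's, which takes the directness of the decomposition over $\beta$ for granted. The paper also asserts that every element of $\Sym^{\vec\beta}T^*_xM$ equals some $\nabla^{\vec\beta}f(x)$, although only the products $\alpha_1^{\otimes\beta(1)}\otimes\cdots\otimes\alpha_k^{\otimes\beta(k)}$ arise and these merely span; your multihomogeneity, polarization and nondegenerate-pairing arguments fill exactly these two points.
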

\begin{proof}
We use Proposition~\ref{prop:SpanNPDO}. Let us write $\NPDO(M) = \oplus_{\beta_0, \vec{\beta}}\NPDO^{\beta_0,\vec{\beta}}(M)$, where the subspace $\NPDO^{\beta_0,\vec{\beta}}(M)$ is spanned by operators
\begin{equation} \label{basis1} f  \mapsto f^{\beta_0} \cdot (X_1f)(X_2f) \cdots (X_p f), \qquad |\vec{\beta}|_E = \frac{p}{2},\end{equation}
where $X_j$ is a section of $\Sym^{r_j} TM$, and exactly $\vec{\beta}(k)$ of them equal $k$. If we label $X_1, \dots, X_n$ in non-decreasing order, then
\begin{equation} \label{basis2} f \mapsto f^{\beta_0} \cdot  \nabla^{\vec{\beta}}f (X_1 \otimes X_2 \otimes \cdots \otimes X_p) =  \nabla^{\beta}f (X_1 \otimes X_2 \otimes \cdots \otimes X_p).\end{equation}
Further symmetrizing $X_1 \otimes \cdots \otimes X_p$ over $S_{\vec{\beta}}$, we have a section of $\Sym^{\vec{\beta}} TM$ with the same values when evaluated by $\nabla^{\vec{\beta}} f$. Uniqueness finally follows from the fact that any element in $\Sym^{\vec{\beta}} T^*_xM$, $x\in M$ can be written as $\nabla^{\vec{\beta}} f(x)$ for some $f\in C^\infty(M)$.
\end{proof}

\subsection{Structure of the frame bundle}
We refer to, e.g., \cite{MR4722040} for details on the construction and relations on the orthogonal frame bundle. Let $(M,g)$ be a Riemannian manifold of dimension~$d$. For each $x \in M$, let $\Ort(M)_x$ denote the space of all linear isometries $u:\mathbb{R}^d \to T_xM$ from $\mathbb{R}^d$ with Euclidean structure to the tangent space $T_xM$. We have a transitive action of $\Ort(d)$ on $\Ort(M)_x$ by composition on the right. The group $\Ort(d)$ acts on $\Ort(M)_x$ freely and transitively by right composition, $u \cdot a := u \circ a$. Setting $\Ort(M) := \bigsqcup_{x\in M}\Ort(M)_x$, we obtain the principal bundle
\[
  \Ort(d) \;\longrightarrow\; \Ort(M) \;\stackrel{\pi}{\longrightarrow}\; M,
\]
called \emph{the orthonormal frame bundle} of $(M,g)$.
The tangent bundle $T\Ort(M)$ is spanned by two families of vector fields, defined as follows.
\begin{enumerate}[$\bullet$]
    \item \emph{Vertical fields.}
    For any $Z \in \so(d)$, define 
    \[\xi_Z(u) = \frac{d}{dt} u \circ e^{tZ} |_{t=0}\qquad u \in \Ort(M).\]
    Since $\pi(u \circ e^{tZ}) = \pi(u)$ for all $t$, we have $\pi_\ast \xi_Z = 0$, so the vertical field $\xi_Z$ is tangent to the fibers. The span $\mathcal{V}_u := \{\xi_Z(u) : Z \in \so(d)\}$ is a subspace of $T_u\Ort(M)$ of dimension $\dim\so(d) = \tfrac{d(d-1)}{2}$, called the \emph{vertical subspace}.

    \item \emph{Horizontal fields.}
    For $a \in \mathbb{R}^d$ and $u \in \Ort(M)$, let $c\colon(-\varepsilon,\varepsilon)\to M$ be any smooth curve with $c(0) = \pi(u)$ and $\dot{c}(0) = u(a)$, and let $\tilde{c}$ be its unique horizontal lift to $\Ort(M)$ with $\tilde{c}(0) = u$, i.e., $\tilde c$ is the result of parallel translating $u$ along $c$. Define $H_a(u) := \dot{\tilde{c}}(0)$.
    Since parallel transport depends linearly on the initial velocity, the map $a \mapsto H_a(u)$ is linear in $a$, so $\mathcal{H}_u := \{H_a(u) : a \in \mathbb{R}^d\}$ is a $d$-dimensional subspace of $T_u\Ort(M)$, called the \emph{horizontal subspace}.
    By construction, $\pi_\ast H_a(u) = u(a) \in T_{\pi(u)}M$, so $\pi_\ast|_{\mathcal{H}_u} \colon \mathcal{H}_u \to T_{\pi(u)}M$ is a linear isomorphism. We write $\mathcal{H} \subseteq T\Ort(M)$ for the rank-$d$ subbundle $\bigsqcup_{u}\mathcal{H}_u$.
\end{enumerate}

Since $\dim\mathcal{H}_u + \dim\mathcal{V}_u = d + \tfrac{d(d-1)}{2} = \tfrac{d(d+1)}{2} = \dim\Ort(M)$ and $\mathcal{H}_u \cap \mathcal{V}_u = \{0\}$, we obtain a direct-sum decomposition
\[
  T\Ort(M) \;=\; \mathcal{H} \oplus \mathcal{V}.
\]
The vertical subbundle $\mathcal{V}$ is intrinsic to the principal bundle structure, while $\mathcal{H}$ depends on the choice of connection $\nabla$, which in our case is Levi-Civita.


\subsection{Invariance under isometries}

Let now $G$ denote the isometry group of $(M,g)$. The action of $G$ on $M$ lifts to a left action on $\Ort(M)$ defined 
\[
  (\varphi \cdot u)(a) \;:=\; \varphi_\ast\, u(a),
  \qquad \varphi \in G,\; u \in \Ort(M),\; a \in \mathbb{R}^d.
\]
We write $\ell_\varphi \colon \Ort(M) \to \Ort(M)$ for the corresponding diffeomorphism $u \mapsto \ell_\varphi(u) =\varphi \cdot u$, which induces an action on $T\Ort(M)$ by $\varphi \cdot v := (\ell_\varphi)_\ast v$. Furthermore, isometries preserve horizontal and vertical subbundles by the property $\varphi_* \nabla_{X} Y = \nabla_{\varphi_* X} \varphi_* Y$.

For vertical fields, since the fiber over $\pi(u)$ is mapped to the fiber over $\varphi(\pi(u))$ and the right $\Ort(d)$-action commutes with $\ell_\varphi$, differentiating $\ell_\varphi(u \circ e^{tZ}) = (\varphi \cdot u) \circ e^{tZ}$ at $t = 0$ gives $\varphi \cdot \xi_Z(u) = \xi_Z(\varphi \cdot u)$. For horizontal fields, since $\ell_\varphi(\tilde{c})$ is the horizontal lift of $\varphi \circ c$ and $\dot{(\varphi \circ c)}(0) = \varphi_\ast u(a) = (\varphi \cdot u)(a)$, differentiating at $t = 0$ gives $\varphi \cdot H_a(u) = H_a(\varphi \cdot u)$.
%
These identities describe how the $G$-action interacts with $H_a$ and $\xi_Z$ pointwise. For the analysis of differential operators, we need to study the interaction with smooth functions. Given a smooth function $\mathbf{Y}:\Ort(M)\to\mathbb{R}^d$, we define the vector fields
\[
  H_{\mathbf{Y}} \colon u \mapsto H_{\mathbf{Y}(u)}(u).
\]
The following lemma records how $H_{\mathbf{Y}}$ interacts with $F\mapsto F\circ\ell_\varphi$.

\begin{lemma}\label{lem:IsomHorDir}
Let $F: \Ort(M) \to \mathbb{R}$ and $\mathbf{Y}: \Ort(M) \to \mathbb{R}^d$ be two smooth functions, and let $\ell_\varphi$ be the left action of the isometry group $\varphi \in G$ on $\Ort(M)$ and $T\Ort(M)$. Write $H_{\mathbf{Y}}$ for the vector field $u \mapsto H_{\mathbf{Y}(u)}(u)$. Then for any $\varphi \in G$
$$H_{\mathbf{Y}}(F \circ \ell_\varphi) = (H_{\mathbf{Y} \circ \ell_{\varphi^{-1}}} F) \circ \ell_{\varphi}.$$
\end{lemma}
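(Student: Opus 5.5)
The identity is pointwise, so I will fix $u \in \Ort(M)$ and compare both sides there. The starting point is the equivariance of horizontal fields recorded just before the lemma:
\[
\varphi\cdot H_a(w) = (\ell_\varphi)_\ast H_a(w) = H_a(\varphi\cdot w), \qquad a \in \R^d,\ w \in \Ort(M).
\]
The only subtlety is that $\mathbf{Y}$ is not constant, so the vector $a$ must be frozen at the right point before applying this identity.

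First, I will compute the left-hand side at $u$ by the chain rule:
\[
H_{\mathbf{Y}}(F\circ\ell_\varphi)(u) = dF_{\ell_\varphi(u)}\bigl((\ell_\varphi)_\ast H_{\mathbf{Y}(u)}(u)\bigr).
\]
Only the value $a = \mathbf{Y}(u)$ enters here, so I can apply the equivariance identity with this fixed $a$. This gives
\[
(\ell_\varphi)_\ast H_{\mathbf{Y}(u)}(u) = H_{\mathbf{Y}(u)}(\varphi\cdot u).
\]
Hence the left-hand side equals $dF_{\varphi\cdot u}\bigl(H_{\mathbf{Y}(u)}(\varphi\cdot u)\bigr)$.

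Next, I will evaluate the right-hand side at $u$. By definition it is
\[
\bigl(H_{\mathbf{Y}\circ\ell_{\varphi^{-1}}}F\bigr)(\varphi\cdot u) = dF_{\varphi\cdot u}\bigl(H_{(\mathbf{Y}\circ\ell_{\varphi^{-1}})(\varphi\cdot u)}(\varphi\cdot u)\bigr).
\]
The subscript simplifies because $\ell_{\varphi^{-1}}\circ\ell_\varphi=\id$, which holds since $u\mapsto\varphi\cdot u$ is a left action. So the subscript equals $\mathbf{Y}(u)$, and the two sides agree.

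The main point requiring care is the justification of $(\ell_\varphi)_\ast H_a = H_a\circ\ell_\varphi$, which I take from the discussion before the lemma. It rests on two facts: $\varphi\circ c$ has initial velocity $\varphi_\ast u(a)=(\varphi\cdot u)(a)$, and isometries map horizontal lifts to horizontal lifts, because they preserve the Levi-Civita connection and hence parallel transport. Everything else is bookkeeping with the chain rule.
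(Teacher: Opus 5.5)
Your proposal is correct and follows essentially the same route as the paper. The paper's proof is the same chain of equalities: freeze $a=\mathbf{Y}(u)$, use $(\ell_\varphi)_\ast H_a(u)=H_a(\varphi\cdot u)$, and rewrite $\mathbf{Y}(u)=(\mathbf{Y}\circ\ell_{\varphi^{-1}})(\varphi\cdot u)$.
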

\begin{proof}
Follows from
\begin{align*}
    H_{\mathbf{Y}(u)}(u) (F \circ \ell_\varphi) &= (\varphi \cdot H_{\mathbf{Y}(u)}(u)) F = ( H_{\mathbf{Y}(u)}(\varphi \cdot u)) F \\
    & = ( H_{(\mathbf{Y} \circ \ell_{\varphi^{-1}})(\varphi u)}(\varphi \cdot u)) F =( H_{(\mathbf{Y} \circ \ell_{\varphi^{-1}})} F)(\ell_\varphi(u)) .
\end{align*}
\end{proof}

This lemma naturally extends by induction from $\R^d$ to $\Sym^r\R^d$ as follows.

\begin{corollary}\label{cor:IsomHorDirHigher}
Let $e_1, \dots, e_d$ be the standard basis of $\mathbb{R}^d$. Let $\mathbf{X}: \Ort(M) \to (\mathbb{R}^d)^{\otimes r}$ be a function defined on $\Ort(M)$, and write $H_{\mathbf{X}}^r$, such that if $\mathbf{X} = \sum_{j_1, \dots, j_r=1}^d \mathbf{X}^{j_1,\dots, j_r} e_{j_1} \otimes \cdots \otimes e_{j_r}$ then
\begin{equation} \label{HbfX}
  H_{\mathbf{X}}^r  := \sum_{j_1,\dots,j_r=1}^d\mathbf{X}^{j_1\cdots j_r}\, H_{e_{j_1}}\cdots H_{e_{j_r}}.
\end{equation}
Then for any isometry $\varphi \in G$ and smooth $F : \Ort(M) \to \mathbb{R}$,
\[
  H_{\mathbf{X}}^r(F \circ \ell_\varphi) = (H^r_{\mathbf{X} \circ \ell_{\varphi^{-1}}} F) \circ \ell_\varphi.
\]
\end{corollary}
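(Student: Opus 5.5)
The plan is induction on $r$. Lemma~\ref{lem:IsomHorDir} is exactly the case $r=1$, since $H^1_{\mathbf{X}} = \sum_j \mathbf{X}^j H_{e_j} = H_{\mathbf{X}}$ by linearity of $a \mapsto H_a(u)$. Before setting up the induction I would record the special case of constant coefficients: for a fixed $a \in \mathbb{R}^d$, Lemma~\ref{lem:IsomHorDir} with $\mathbf{Y} \equiv a$ gives
\[
H_a(F\circ\ell_\varphi) = (H_aF)\circ\ell_\varphi,
\]
because $\mathbf{Y}\circ\ell_{\varphi^{-1}} = \mathbf{Y}$ when $\mathbf{Y}$ is constant. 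Equivalently, this is the identity $\varphi\cdot H_a(u) = H_a(\varphi\cdot u)$ already established in the text.

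Iterating the constant-coefficient identity handles any composition of horizontal fields. For every multi-index $(j_1,\dots,j_r)$,
\[
H_{e_{j_1}}\cdots H_{e_{j_r}}(F\circ\ell_\varphi) = (H_{e_{j_1}}\cdots H_{e_{j_r}}F)\circ\ell_\varphi .
\]
This follows by induction on $r$. Apply the hypothesis to the inner $r-1$ derivatives, then apply the $r=1$ constant case to the smooth function $H_{e_{j_2}}\cdots H_{e_{j_r}}F$.

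It remains to bring in the coefficients. Multiply the last identity by $\mathbf{X}^{j_1\cdots j_r}(u)$ and sum, which gives
\[
H^r_{\mathbf{X}}(F\circ\ell_\varphi)(u) = \sum \mathbf{X}^{j_1\cdots j_r}(u)\,(H_{e_{j_1}}\cdots H_{e_{j_r}}F)(\ell_\varphi u).
\]
Now rewrite $\mathbf{X}(u) = (\mathbf{X}\circ\ell_{\varphi^{-1}})(\ell_\varphi u)$. The right-hand side is then exactly $(H^r_{\mathbf{X}\circ\ell_{\varphi^{-1}}}F)(\ell_\varphi u)$, which is the claim.

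The one point needing care is the reading of \eqref{HbfX}. There the coefficients $\mathbf{X}^{j_1\cdots j_r}$ multiply the composed operator from the outside and are not differentiated. This is the reading under which the computation above holds, so no product-rule terms arise and the argument is essentially formal. I expect no real obstacle beyond keeping this convention straight, and making sure the $S_{\vec\beta}$ and symmetric structure of $\mathbf{X}$ plays no role, since the identity is linear in $\mathbf{X}$.
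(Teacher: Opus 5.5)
Your proposal is correct and follows essentially the same route as the paper: you apply Lemma~\ref{lem:IsomHorDir} with constant directions $e_j$ and induct to get $H_{e_{j_1}}\cdots H_{e_{j_r}}(F\circ\ell_\varphi) = (H_{e_{j_1}}\cdots H_{e_{j_r}}F)\circ\ell_\varphi$, then multiply by $\mathbf{X}^{j_1\cdots j_r}(u) = (\mathbf{X}\circ\ell_{\varphi^{-1}})^{j_1\cdots j_r}(\ell_\varphi u)$ and sum. Your remark that a constant $\mathbf{Y}$ satisfies $\mathbf{Y}\circ\ell_{\varphi^{-1}}=\mathbf{Y}$ makes explicit the step the paper leaves implicit.
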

\begin{proof}
By using Lemma~\ref{lem:IsomHorDir} and induction, it follows that for any fixed $j_1, \dots, j_r \in \{1,\dots,d\}$,
\[
  H_{e_{j_1}} \cdots H_{e_{j_r}}(F \circ \ell_\varphi)(u)
  = \bigl(H_{e_{j_1}} \cdots H_{e_{j_r}} F\bigr)(\ell_\varphi u).
\]
The result then follows by multiplying by
$\mathbf{X}(u)(e_{j_1},\dots,e_{j_r})
 = \mathbf{X}(\ell_{\varphi^{-1}}(\ell_\varphi u))(e_{j_1},\dots,e_{j_r})$
and summing over all indices.
\end{proof}

\subsection{Horizontal lifts and associated maps} \label{sec:Associated}
Since the map $\pi_{\ast ,u}|_{\calH_u}: \calH_u \to T_{\pi(u)} M$ is invertible, we can write its inverse as $h_u: T_{\pi(u)} M \to \calH_u$. We call this map \emph{the horizontal lift} to $u \in \Ort(M)$. This map induces a lift from $TM^{\otimes r} \to \calH_u^{\otimes r}$, $r \geq 0$, which we will, with slight abuse of notation, denote by $h_u$ as well. Finally, the map $H_a \mapsto a = u^{-1}(\pi_\ast  H_a)$ induces a linear bijection from $\calH_u$ to $\mathbb{R}^d$, which is then extended to identifying $\calH_u^{\otimes r}$ with $(\mathbb{R}^d)^{\otimes r}$.
For every section $X \in \Gamma(TM^{\otimes r})$, we define its \emph{associated function} $\mathbf{X}: \Ort(M) \to (\mathbb{R}^d)^{\otimes r}$ such that if $\mathbf{X} = \sum_{j_1, \dots, j_r=1}^d \mathbf{X}^{j_1,\dots, j_r} e_{j_1} \otimes \cdots \otimes e_{j_r}$, then
$$h_u X(\pi(u)) = \sum_{j_1, \dots, j_r=1}^d \mathbf{X}^{j_1,\dots, j_r}(u)  H_{e_{j_1}}(u) \otimes \cdots \otimes H_{e_{j_r}}(u), \qquad u \in \Ort(d).$$
By definition, we have $\mathbf{X}(u \cdot a) = a^{-1} \cdot \mathbf{X}(u)$ for any $a \in \Ort(d)$, where the action on the right side is the action on $\Ort(d)$ on $(\mathbb{R}^d)^{\otimes r}$ induced by the standard representation on $\mathbb{R}^d$. The result below is standard.
\begin{lemma} \label{lemma:nablarHX}
Let $X \in \Gamma(TM^{\otimes r})$ be a multi-vector field with associated function $\mathbf{X}: \Ort(M) \to (\mathbb{R}^d)^{\otimes r}$. Let $H_{\mathbf{X}}$ be as in \eqref{HbfX}, $r \geq 0$. Then
$$(\nabla^r_{X}f) \circ \pi = H_{\mathbf{X}}^r (f \circ \pi), \qquad f \in C^\infty(M).$$
\end{lemma}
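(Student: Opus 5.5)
The plan is to argue by induction on $r$, using the recursive identity $\nabla^{r+1}_{Y \otimes X} f = \nabla_Y \nabla^r_X f - \nabla^r_{\nabla_Y X} f$ recorded earlier in the paper. Both sides of the claim are linear over $C^\infty(M)$ in $X$, in the sense that replacing $X$ by $\psi X$ replaces $\mathbf{X}$ by $(\psi\circ\pi)\mathbf{X}$. It therefore suffices to treat decomposable $X = Y \otimes X'$ with $Y$ a vector field and $X'$ of rank $r$, and then sum.

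The base cases come first. For $r=0$ the statement is trivial. For $r=1$ we have $H_{\mathbf{Y}}(f\circ\pi)(u) = df(\pi_* H_{\mathbf{Y}(u)}(u)) = df(u\mathbf{Y}(u)) = df(Y(\pi(u)))$, since $u\mathbf{Y}(u) = \pi_* h_u Y = Y(\pi(u))$.

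Two auxiliary facts drive the induction step.
\begin{enumerate}[(a)]
\item For a function $F$ on $M$ and a vector field $Y$, we have $H_{\mathbf{Y}}(F\circ\pi) = (YF)\circ\pi$. This is the case $r=1$ just shown.
\item The associated function of $\nabla_Y X'$ equals $H_{\mathbf{Y}}\mathbf{X'}$. Here $H_{\mathbf{Y}}$ acts componentwise on the $(\mathbb{R}^d)^{\otimes r}$-valued function $\mathbf{X'}$.
\end{enumerate}
Fact (b) is the standard statement that covariant differentiation corresponds to horizontal differentiation of equivariant functions. To prove it, take a curve $c$ with $\dot c(0) = Y(x)$ and its horizontal lift $\tilde c$, which is a parallel frame along $c$. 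In this frame, $\mathbf{X'}(\tilde c(t))$ is the component vector of $X'(c(t))$. Parallel transport along $c$ back to $x$ is then expressed by $\tilde c(0)\tilde c(t)^{-1}$. Differentiating at $t=0$ gives
\[
\tfrac{d}{dt}\big|_{t=0}\mathbf{X'}(\tilde c(t)) = \tilde c(0)^{-1}(\nabla_Y X')(x).
\]
This is exactly the statement $(\nabla_Y X')^{\mathbf{}}$ associated function $= H_{\mathbf{Y}}\mathbf{X'}$.

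For the induction step, write the associated function of $Y\otimes X'$ as $\mathbf{Y}\otimes\mathbf{X'}$. Then
\[
H^{r+1}_{\mathbf{Y}\otimes\mathbf{X'}}(f\circ\pi) = \sum_{j} \mathbf{Y}^{j} H_{e_j}\Bigl(\sum \mathbf{X'}^{J} H_{e_J}(f\circ\pi)\Bigr) - \sum \bigl(H_{\mathbf{Y}}\mathbf{X'}^{J}\bigr) H_{e_J}(f\circ\pi).
\]
This follows by the Leibniz rule applied to $H_{\mathbf{Y}}\bigl(H^r_{\mathbf{X'}}(f\circ\pi)\bigr)$. We evaluate the three pieces as follows.
\begin{itemize}
\item By the induction hypothesis, the inner sum is $(\nabla^r_{X'}f)\circ\pi$.
\item By fact (a), the first term is therefore $(\nabla_Y\nabla^r_{X'}f)\circ\pi$.
\item By fact (b) and the induction hypothesis applied to $\nabla_Y X'$, the correction term is $(\nabla^r_{\nabla_Y X'}f)\circ\pi$.
\end{itemize}
The recursive identity then gives the claim for $r+1$.

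The main obstacle is fact (b), including care with the convention that $H_{\mathbf{X}}^r$ applies $H_{e_{j_1}}$ outermost. The index ordering must match that of $\nabla^{r+1}_{Y\otimes X'}$, where $Y$ fills the first slot. This is consistent: $\mathbf{Y}$ occupies the first tensor slot and its $H_{e_{j_1}}$ acts last, exactly as $\nabla_Y$ does.
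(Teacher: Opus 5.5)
Your proposal is correct and follows essentially the same route as the paper: induction on $r$ via $\nabla^{r+1}_{Y\otimes X}f = \nabla_Y\nabla^r_X f - \nabla^r_{\nabla_Y X}f$, using that $H_{\mathbf{Y}}\mathbf{X}$ is the function associated to $\nabla_Y X$ and that $H_{\mathbf{Y}}(f\circ\pi) = (\nabla_Y f)\circ\pi$. You supply details the paper leaves implicit: the parallel-frame proof of fact (b), the Leibniz expansion giving $H^{r+1}_{\mathbf{Y}\otimes\mathbf{X}} = H_{\mathbf{Y}}H^r_{\mathbf{X}} - H^r_{H_{\mathbf{Y}}\mathbf{X}}$, and the $C^\infty(M)$-linear reduction to decomposable tensors.
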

\begin{proof}
Let $X$ and $Y$ be respectively an $r$-multi-vector field and a vector field, with associated function $\mathbf{X}$ and $\mathbf{Y}$. We observe that by the definition of $\mathbf{X}$ and of $H_{\mathbf{Y}}$, we have
$$H_{\mathbf{Y}} \mathbf{X} = {\boldsymbol \nabla}_{\mathbf{Y}} \mathbf{X},$$
that is, the function associated to $\nabla_Y X$. It follows that, 
$$H_{\mathbf{Y} \otimes \mathbf{X}}^{r+1} = H_{\mathbf{Y}} H_{\mathbf{X}}^r - H_{H_{\mathbf{Y}} \mathbf{X}}^r = H_{\mathbf{Y}} H^r_{\mathbf{X}} - H_{{\boldsymbol \nabla}_{\mathbf{Y}} \mathbf{X}}^r .$$
We can now use the above identity, the fact that $H_{\mathbf{Y}}(f \circ \pi) = \nabla_Y f \circ \pi$ and induction to get the result.
\end{proof}

We extend this result to nonlinear operators. First, we define $H^{r,\Sym}_{a_1 \otimes \cdots \otimes a_r}$, by
$$H^{r,\Sym}_{a_1 \otimes \cdots \otimes a_r} = \frac{1}{r!} \sum_{\sigma \in S_r} H_{a_{\sigma(1)}} \cdots  H_{a_{\sigma(r)}},$$
with the convention that $H^{0,\Sym}F =F$ for a smooth $F \in C^\infty(\Ort(M))$.
We note that for any $F \in \Ort(M)$, $u \in \Ort(M)$, we can see $H^{r,\Sym}F(u): a_1 \otimes \cdots \otimes a_r \mapsto (H^{r,\Sym}_{a_1 \otimes \cdots \otimes a_r}F)(u)$ as an element in $(\mathbb{R}^{d,*})^{\otimes r}$. For a pseudo-degree vector $\beta\in \hDV$ with $|\beta|_{E} = \frac{p}{2}$, define $H^{\beta}_{\mathbf{a}}$, $\mathbf{a} \in (\mathbb{R}^d)^{\otimes p}$, such that if $\beta$ has order $k$, then
$$(H_{\mathbf{a}}^{\beta}F)(u) = \otimes_{r=0}^k (H^{r,\Sym}F(u))^{\beta(r)}(\mathbf{a}).$$
By applying Corollary~\ref{cor:IsomHorDirHigher} and Lemma~\ref{lemma:nablarHX} to each factor in the $H^{\beta}$, we get the following result.
\begin{lemma} \label{lemma:PreTh11}
Let $\pi: \Ort(M) \to M$ be the canonical projection. Let $\beta\in \hDV$ be a pseudo-degree vector with $|\beta|_{E} = \frac{p}{2}$. For any smooth functions $F \in C^\infty(M)$ and $\mathbf{X}: \Ort(M) \to (\mathbb{R}^d)^{\otimes p}$, if $\varphi \in G$ is an isometry of $M$, then
$$H_{\mathbf{X}}^{\beta} (F \circ \ell_\varphi) = (H_{\mathbf{X}}^{\beta} F) \circ \ell_\varphi.$$
Furthermore, if $\mathbf{X}$ is the function associated to $X \in \Gamma(TM^{\otimes r})$, then for any $f \in C^\infty(M)$,
$$(\nabla^{\beta}_X f) \circ \pi = H_{\mathbf{X}}^{\beta} (f \circ \pi).$$
\end{lemma}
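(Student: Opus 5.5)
The plan is to treat the two assertions of Lemma~\ref{lemma:PreTh11} separately, reducing both to statements about a single factor $H^{r,\Sym}F$ and then passing to tensor products.

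For the equivariance statement, note first that $\ell_\varphi$ commutes with every constant-coefficient horizontal field: the computation preceding Lemma~\ref{lem:IsomHorDir} gives $\varphi\cdot H_a(u)=H_a(\varphi\cdot u)$. Hence Corollary~\ref{cor:IsomHorDirHigher}, applied with constant $\mathbf{X}$, yields
\[
H_{a_1}\cdots H_{a_r}(F\circ\ell_\varphi)=(H_{a_1}\cdots H_{a_r}F)\circ\ell_\varphi
\]
for all $a_1,\dots,a_r\in\mathbb{R}^d$. Averaging over $S_r$ then gives $H^{r,\Sym}(F\circ\ell_\varphi)(u)=H^{r,\Sym}F(\ell_\varphi u)$ as elements of $(\mathbb{R}^{d,*})^{\otimes r}$. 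Taking the tensor product over $r$ with multiplicities $\beta(r)$ and evaluating on $\mathbf{X}(u)$ gives
\[
H^\beta_{\mathbf{X}}(F\circ\ell_\varphi)(u)=\bigl(\otimes_r H^{r,\Sym}F(\ell_\varphi u)^{\beta(r)}\bigr)(\mathbf{X}(u)).
\]

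This is where I expect the only real subtlety. The identity literally reads $(H^\beta_{\mathbf{X}\circ\ell_{\varphi^{-1}}}F)\circ\ell_\varphi$, so the statement as written needs $\mathbf{X}$ to be $\ell_\varphi$-invariant. That holds when $\mathbf{X}$ is constant, or when $X$ is itself $G$-invariant. I would prove the version with $\mathbf{X}\circ\ell_{\varphi^{-1}}$ on the right-hand side, which is exactly what Corollary~\ref{cor:IsomHorDirHigher} provides, and note that it reduces to the stated form under invariance. The point is that the coefficients of $\mathbf{X}$ are only evaluated, never differentiated, in $H^\beta$, so no derivative of $\mathbf{X}$ appears.

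For the second statement, I would first establish the single-factor identity
\[
H^{r,\Sym}(f\circ\pi)(u)(a_1\otimes\cdots\otimes a_r)=\nabla^{r,\Sym}f\bigl(u a_1,\dots,u a_r\bigr)
\]
at $\pi(u)$. The approach is to apply Lemma~\ref{lemma:nablarHX} to multi-vector fields whose associated function equals $a_{\sigma(1)}\otimes\cdots\otimes a_{\sigma(r)}$ at the point $u$. Because $H_{\mathbf{X}}^r$ in \eqref{HbfX} multiplies by coefficients and differentiates only $F$, its value at $u$ depends only on $\mathbf{X}(u)$. The left-hand side of Lemma~\ref{lemma:nablarHX} is tensorial in $X$, so we may extend $u(a_{\sigma(1)})\otimes\cdots$ arbitrarily to a local section. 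Symmetrizing over $\sigma$ then gives the identity above.

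Finally, $\nabla^\beta f$ is the tensor product of these factors. Under the identification $h_u$ of $(\mathbb{R}^d)^{\otimes p}$ with $T_{\pi(u)}M^{\otimes p}$, evaluating the tensor product on $\mathbf{X}(u)$, which corresponds to $X(\pi(u))$, yields $(\nabla^\beta_X f)(\pi(u))$ by multilinearity.
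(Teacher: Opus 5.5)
Your proposal is correct and follows the paper's own route: the paper's proof is a one-line appeal to Corollary~\ref{cor:IsomHorDirHigher} and Lemma~\ref{lemma:nablarHX} applied to each factor of $H^{\beta}$, and you simply spell out the pointwise-evaluation and symmetrization details it leaves implicit. You are also right that the first identity only holds in the form $H^{\beta}_{\mathbf{X}}(F\circ\ell_\varphi)=(H^{\beta}_{\mathbf{X}\circ\ell_{\varphi^{-1}}}F)\circ\ell_\varphi$, so the displayed version needs $\ell_\varphi$-invariant $\mathbf{X}$; this corrected form is exactly what the paper uses in the proof of Lemma~\ref{lemma:InvariantBfP}.
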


We will then end with the following lemma that will be used in the proof of Theorem~\ref{th:main}.
\begin{lemma} \label{lemma:InvariantBfP}
Let $\varphi: M \to M$ be an isometry and $P \in \NPDO(M)$ be a nonlinear operator with $P^{\beta}$ defined as in Lemma~\ref{lemma:Pm0m}. If $\mathbf{P}^{\beta}$ is the map associated to $P^{\beta}$, then
$$P(f \circ \varphi) = (Pf) \circ \varphi \qquad \text{for every } f\in C^\infty(M),$$
if and only if for any $\beta \in \hDV$, the associated map $\mathbf P^{\beta}$ satisfies
\[\mathbf{P}^{\beta}(\varphi^{-1} \cdot u) = \mathbf{P}^{\beta}(u)\qquad\text{for any } u \in \Ort(M).\]
\end{lemma}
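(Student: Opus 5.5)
The plan is to pull the equivariance identity back to the frame bundle and then use the uniqueness in Lemma~\ref{lemma:Pm0m} to compare coefficients. Fix $f \in C^\infty(M)$ and $\varphi \in G$. By Lemma~\ref{lemma:Pm0m} we have $Pf = \sum_\beta \nabla^\beta f(P^\beta)$. Composing with $\pi$ and applying Lemma~\ref{lemma:PreTh11} to each term gives
\[
(Pf)\circ\pi = \sum_\beta H^\beta_{\mathbf{P}^\beta}(f\circ\pi).
\]
At a frame $u$, this is the pairing of the tensor $(H^{\beta}(f\circ\pi))(u)\in \Sym^{\vec\beta}(\mathbb{R}^{d})^*$ with $\mathbf{P}^\beta(u)$.

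First compute the left-hand side $P(f\circ\varphi)\circ\pi$. Since $\pi\circ\ell_\varphi = \varphi\circ\pi$, we have $(f\circ\varphi)\circ\pi = (f\circ\pi)\circ\ell_\varphi$. Applying Corollary~\ref{cor:IsomHorDirHigher} factor by factor, in its tensor form $H^{r,\Sym}(F\circ\ell_\varphi)(u) = H^{r,\Sym}F(\ell_\varphi u)$, yields
\[
P(f\circ\varphi)(\pi(u)) = \sum_\beta \bigl\langle H^\beta(f\circ\pi)(\varphi\cdot u),\ \mathbf{P}^\beta(u)\bigr\rangle .
\]
The right-hand side is $(Pf)(\varphi(\pi(u))) = (Pf)(\pi(\varphi\cdot u))$, which equals
\[
\sum_\beta \bigl\langle H^\beta(f\circ\pi)(\varphi\cdot u),\ \mathbf{P}^\beta(\varphi\cdot u)\bigr\rangle .
\]
Hence equivariance for this $\varphi$ is equivalent to
\[
\sum_\beta \bigl\langle H^\beta(f\circ\pi)(w),\ \mathbf{P}^\beta(\varphi^{-1}\cdot w)-\mathbf{P}^\beta(w)\bigr\rangle = 0
\qquad \text{for all } w \in \Ort(M),\ f \in C^\infty(M),
\]
where we substituted $w=\varphi\cdot u$.

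For the direction "$\Leftarrow$", this vanishing is immediate: if $\mathbf{P}^\beta(\varphi^{-1}\cdot u)=\mathbf{P}^\beta(u)$ for all $u$, every difference in the sum is zero.

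For "$\Rightarrow$", the step I expect to be the main obstacle is to separate the individual $\beta$-components from the summed identity. I would argue pointwise. Fix $w$ with $x=\pi(w)$, and observe that the differences $\mathbf{P}^\beta(\varphi^{-1}\cdot w)-\mathbf{P}^\beta(w)$ lie in $\Sym^{\vec\beta}\mathbb{R}^d$. They are the associated values of the sections $D^\beta := \varphi_*^{-1}$-pullback of $P^\beta$ minus $P^\beta$, evaluated at $x$. The identity then says that the operator $f\mapsto\sum_\beta \nabla^\beta f(D^\beta)$ vanishes at $x$ for every $f$.

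Repeating the argument at every point shows this operator vanishes identically. The uniqueness part of Lemma~\ref{lemma:Pm0m} then forces $D^\beta=0$ for each $\beta$. That uniqueness rests on the jets $(f(x),\nabla^{1,\Sym}f(x),\dots)$ being independently prescribable, for example via polynomials in normal coordinates. Together with homogeneity in the scaling $f\mapsto cf$ and in the individual jet components, this separates the different $\beta$, including distinct values of $\beta_0$.

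Because the associated map of a section is determined by the section, $D^\beta=0$ gives $\mathbf{P}^\beta(\varphi^{-1}\cdot u)=\mathbf{P}^\beta(u)$ for all $u$. One point to check carefully is that the pulled-back section still lies in $\Sym^{\vec\beta}TM$, so that uniqueness applies; this holds because $\varphi_*$ commutes with the $S_{\vec\beta}$-action.
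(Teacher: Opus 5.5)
Your proof is correct and follows essentially the same route as the paper. Both arguments pull back to $\Ort(M)$, use that horizontal derivatives commute with $\ell_\varphi$ so that $\mathbf{P}^{\beta}\circ\ell_{\varphi^{-1}}$ is the function associated to $\varphi_* P^{\beta}$, and conclude via the uniqueness in Lemma~\ref{lemma:Pm0m}. The only difference is bookkeeping: the paper first reduces to a single $\beta$ by noting that each $\NPDO^{\beta}(M)$ is preserved by $\varphi$, whereas you keep the full sum and separate the $\beta$-components at the end by multi-homogeneity in independently prescribable jets, which makes explicit the direct-sum property the paper uses tacitly.
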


\begin{proof}
Throughout the proof, $f \in C^\infty(M)$ will be an arbitrary smooth function on $M$. Let us first consider the case when $P \in \NPDO^\beta(M)$ for fixed $\beta = (\beta_0 ,\vec{\beta}) \in \hDV$, so that we can write $Pf =\nabla^{\beta}f(P^{\beta})$. We first see that for any $\varphi \in G$,
$$P(f \circ \varphi) \circ \varphi^{-1} = \nabla^{\beta} f (\varphi_* P^{\beta}).$$
using \eqref{Xvarphistar} on every factor. It follows that $\NPDO^{\beta}(M)$ is preserved under the action of $\varphi$. Furthermore, we see that $P$ is equivariant if and only if $\varphi_* P^{\beta} = P^{\beta}$. On the other hand we see that
\begin{align*}
& P(f \circ \varphi) \circ \varphi^{-1} \circ \pi =  \nabla_{P^\beta}^{\beta} (f \circ \varphi) \circ \pi \circ l_{\varphi^{-1}} \\
& =  H_{\mathbf{P}^{\beta}}^{\beta} (f \circ \pi \circ \ell_{\varphi} )  \circ l_{\varphi^{-1}} = H_{\mathbf{P}^{\beta} \circ \ell_{\varphi^{-1}}}^{\beta} (f \circ \pi ).
\end{align*}
It follows that $\mathbf{P}^{\beta} \circ \ell_{\varphi^{-1}}$ is the function associated to $\varphi_* P^{\beta}$. If follows that $\varphi_* P^{\beta} = P^{\beta}$ if and only if $\mathbf{P}^{\beta} \circ \ell_{\varphi^{-1}} = \mathbf{P}^{\beta}$, which gives the result for our operator $P \in \NPDO^{\beta}(M)$.

For a general $P \in \NPDO(M)$, since the subspaces $\NPDO^{\beta}(M) \subset \NPDO(M)$ are preserved under the action of $\varphi$, we have that $P$ is equivariant if and only if $\mathbf{P}^{\beta} \circ \ell_{\varphi^{-1}} = \mathbf{P}^{\beta}$ holds for all~$\beta \in \hDV$.
\end{proof}

\section{Description of equivariant operators}\label{sec:EquivariantOperators}
\subsection{Riemannian model spaces}
We will give some general details about constant curvature spaces in this section, and refer the reader to, e.g., \cite[p. 204-207]{lee2018introduction}, \cite[Chapter XI.6-7]{kobayashi1996foundations}, \cite[Chapter 6.4]{sharpe2000differential} for details. Let $(M,g)$ be a simply connected Riemannian manifold with isometry group $G$. Assume that for every pair $x,y \in M$ and every linear isomorphism $q: T_x M \to T_yM$, there exists an isomorphism $\varphi \in G$ such that $\varphi_{\ast ,x} = q$. It follows that the action of the isometry group $G$ on $\Ort(M)$ is transitive. The Riemannian manifold $M = M^{d,K}$ will then be determined by its dimension $d$ and its constant sectional curvature $K$. If we let $\mathbb{R}^d$, $\mathbb{S}^d$ and~$\mathbb{H}^d$ denote the Euclidean space, the sphere, and the hyperbolic space of dimension $d$ respectively then for $K >0$,
$$M^{d,0} \cong \mathbb{R}^d, \qquad M^{d,K} \cong \sqrt{K} \cdot \mathbb{S}^d, \qquad M^{d,-K} \cong \sqrt{-K} \cdot \mathbb{H}^d.$$
Here $\cong$ denotes that the spaces are isomorphic. In these cases, the isomorphism groups are respectively Lie group isomorphic to $\E(d)$, $\Ort(d+1)$ and $\Ort(d,1)$. Their curvature tensor is given by
$$R(X,Y)Z = K\left(\langle Y, Z \rangle X - \langle X, Z \rangle Y\right).$$
For these spaces, we write the main result, as stated in Theorem~\ref{th:main}.



\subsection{Proof of Theorem~\ref{th:main}} \label{sec:ProofMain}
By Proposition~\ref{prop:NEquivariant}, each $N_\gamma$ is $G$-equivariant, so the 
map $N$ is well-defined as a linear map into $\NPDO(M)^G$. It remains to prove 
surjectivity, and bijectivity when restricted to $\spn_\mathbb{R}\MG^p$ with $p \leq d$.

For the rest of the proof $f \in C^\infty(M)$ will be an arbitrary smooth function. Conversely, assume that $P \in \NPDO(M)^G$. Considering the decomposition $\NPDO(M) = \oplus_{\beta \in \hDV} \NPDO^{\beta}(M)$, it is sufficient to consider $P \in \NPDO^{\beta}(M)$ for a fixed $\beta_0 \geq 0$ and $\beta \in \hDV$. Write $|\beta|_E = \frac{k}{2}$.

By our assumption, $Pf = \nabla^{\beta} f(P^{\beta})$ with associated function $\mathbf{P}^{\beta}$ which must satisfy $\mathbf{P}^{\beta}(\varphi \cdot u) = \mathbf{P}^{\beta}(u)$ for any $\varphi \in G$. However, since the action of $G$ on $\Ort(M)$ is transitive, it follows that $\mathbf{P}^{\beta}(\varphi \cdot u) = \mathbf{P}_0 \in (\mathbb{R}^d)^{\otimes k}$ is a constant. However, since $\mathbf{P}^{\beta}(\varphi \cdot u)$ is a function associated to a vector field, we have
$$\mathbf{P}^{\beta}(\varphi \cdot u) = a^{-1} \mathbf{P}_0 = \mathbf{P}_0.$$
In conclusion $\mathbf{P}_0$ must be $\Ort(d)$-invariant, and so $\tau= \sharp \mathbf{P}_0 \in \calT_d^k$ is an invariant $k$-tensor on $\mathbb{R}^d$. If $k$ is odd, then $\tau =0$ by Lemma~\ref{lemma:RhoSpans}, and so $P =0$. We continue with the case when $k =2p$ is even, meaning that $\beta \in DV$ is a true degree vector.

We use Lemma~\ref{lemma:RhoSpans} to write $\tau = \sum_{\rho \in \frakR_p} c_\rho \tau_\rho$ as a linear combination. The constant $\sharp \tau$ seen as a function is associated to $\sum_{\rho \in \frakR_p} c_\rho \sharp T_\rho$, and hence 
$$P f = \sum_{\rho \in \frakR_p} c_{\rho} \nabla^{\beta} f (\sharp T_{\rho}) = \sum_{\rho \in \frakR_p} c_{\rho} \tilde N(\rho, \beta) f.$$
Hence, if $\Gamma = \sum_{\rho \in \frakR_p} c_{\rho} \Gamma(\rho,\beta) \in \spn_{\mathbb{R}} \MG$, then $N\Gamma = P$, giving us surjectivity of $N$.

Finally, consider the case when $p \leq d$. Let $\Gamma \in \spn_{\mathbb{R}} \MG$ be any element such that $N\Gamma =0$. If we write $\Gamma = \sum_{\beta \in \DV} \Gamma^{\beta}$, where $\Gamma^{\beta}$ has degree vector $\beta$ then we must have $N \Gamma^{\beta} =0$ for all $\beta \in \DV$. Hence, without loss of generality we may assume that $\Gamma = \Gamma^{\beta}$ for a fixed $\beta \in \DV$. We consider $|\beta|_E =p \leq d$.
We can then write
$$\Gamma =  \sum_{\rho \in \frakR_p} c_{\rho} \Gamma(\rho, \beta), \qquad \beta = (\beta_0, \vec{\beta}).$$
By averaging over $S_{\vec{\beta}}$, we can assume that $\sum_{\rho \in \frakR_p} c_{\rho} \tau_\rho \in \Sym^{\vec{\beta}} \mathbb{R}^d$, since this averaging process does not change $\Gamma$. Furthermore,
$$N\Gamma = \sum_{\rho \in \frakR_p} c_{\rho} \tilde N(\rho, \beta) =  \nabla^{\beta} f \left( \sum_{\rho \in \frakR_p} c_{\rho} \sharp T_{\rho} \right)=0,$$
and since $\nabla^{\beta} f(x)$ can take any value in $\Sym^{\vec{\beta}} T^\ast M$, it follows that $\sum_{\rho \in \frakR_p} c_{\rho} T_{\rho} =0$. Finally, since this equation is written in a linearly independent basis, by Lemma~\ref{lemma:RhoSpans} we must have that all $c_{\rho}$ vanish and so $\Gamma=0$.\hfill$\square$

\section{Further results on equivariant operators} \label{sec:furtherResults}
\subsection{Vanishing operators from a nontrivial kernel} \label{sec:CounterEx}
We will show here the smallest dimensional case not covered by Iwahori's result in Lemma~\ref{lemma:RhoSpans}, i.e., that the set ${\{ \tau_\rho:\rho\in\mathfrak{R}_3\}}$ does not form a basis for $\calT^6_2$ of six-tensors on $\mathbb{R}^2$. In this case $\mathfrak{R}_3$ has 15 elements and in light of Lemma~\ref{lemma:RhoSpans} (b) the elements of ${\{ \tau_\rho:\rho\in\mathfrak{R}_3\}}$ span $\calT^6_2$. We can show that point (c) of the Lemma does not hold by finding a non-trivial tensor invariant to the action of $\Ort(2)$ which vanishes. Let $\tau = \sum_{\rho \in \mathfrak{R}_3} c_\rho \tau_\rho$ be a linear combination of elements $\tau_\rho$. For this to vanish, it needs to do so on each tensor $e_{i_1} \otimes \cdots \otimes e_{i_6}$, where $i_j\in\{1,2\}$, for any $j=1, \dots, 6$. We remark that if an odd number of the indices are 1 (or by symmetry 2), then all $\tau_\rho$ vanish on a tensor constructed in such a way. Notice this by defining $A=\bigl(\begin{smallmatrix}1&0\\0&-1\end{smallmatrix}\bigl)\in \Ort(2)$ and evaluating $\tau_\rho(e_{i_1},\dots,e_{i_6})=\tau_\rho(Ae_{i_1},\dots,Ae_{i_6})=-\tau_\rho(e_{i_1},\dots,e_{i_6})$ which implies that all the elements of the spanning set vanish. Thus, it only suffices to evaluate $\tau$ on tensors constructed with an even number of indices 1. Furthermore, for $A=\bigl(\begin{smallmatrix}0&1\\1&0\end{smallmatrix}\bigr)\in \Ort(2)$ for which $e_{\hat i_1} \otimes \cdots \otimes e_{\hat i_6}$ is defined such that $\hat i_j =2$ if $i_j =1$ and vice-versa, then $\tau_\rho(e_{i_1}, \dots, e_{i_6}) =\tau_\rho(e_{\hat i_1}, \dots, e_{\hat i_6})$ so that if we denote by $k$ the number of elements $e_2$, then the case with $k=0$ is the same as the case with $k=6$, and the case with $k=2$ is the same as $k=4$. Therefore, we only need to consider the cases when zero or two of the indices are equal to 2.

First, observe that for $k=0$ if $\tau =0$, then
$$\tau(e_1, \dots, e_1) = \sum_{\rho \in \mathfrak{R}_3} c_\rho =0.$$
In other words, $c = (c_\rho)$ is orthogonal to $(1, \dots, 1)^\top$ in $\mathbb{R}^{15}$. Next, for the case $k=2$ write $e_{(12)} = e_2 \otimes e_2 \otimes e_1 \otimes e_1 \otimes e_1 \otimes e_1$ as the basis vector with $e_2$ in the first and second argument, and similarly write $e_{(ij)}$ with $i \neq j$ for the vector in $(\mathbb{R}^2)^{\otimes 6}$ composed of tensor products of $e_1$ except at indices $i$ and $j$ in which it is equal to $e_2$. As $\binom{6}{2}=15$ this will result in 15 combinations.

The values of all elements $\tau_\rho$ on each $e_{(ij)}$ is given by the $15 \times 15$-matrix $M$ below.

\[
M=
\scalebox{0.8}{$
\begin{pNiceMatrix}[
    first-row, 
    first-col, 
    code-for-first-row = \bf,
    code-for-first-col = \bf
]
\rho \setminus e_{(\cdot)}  & 12 & 13 & 14 & 15 & 16 & 23 & 24 & 25 & 26 & 34 & 35 & 36 & 45 & 46 & 56\\
\{1,2\} \{3,4\} \{5,6\} &1 & 0 & 0 & 0 & 0 & 0 & 0 & 0 & 0 & 1 & 0 & 0 & 0 & 0 & 1 \\
\{1,2\}\{3,5\}\{4,6\} &1 & 0 & 0 & 0 & 0 & 0 & 0 & 0 & 0 & 0 & 1 & 0 & 0 & 1 & 0 \\
\{1,2\}\{3,6\}\{4,5\} &1 & 0 & 0 & 0 & 0 & 0 & 0 & 0 & 0 & 0 & 0 & 1 & 1 & 0 & 0 \\
\{1,3\}\{2,4\}\{5,6\} &0 & 1 & 0 & 0 & 0 & 0 & 1 & 0 & 0 & 0 & 0 & 0 & 0 & 0 & 1 \\
\{1,3\}\{2,5\}\{4,6\} &0 & 1 & 0 & 0 & 0 & 0 & 0 & 1 & 0 & 0 & 0 & 0 & 0 & 1 & 0 \\
\{1,3\}\{2,6\}\{4,5\} &0 & 1 & 0 & 0 & 0 & 0 & 0 & 0 & 1 & 0 & 0 & 0 & 1 & 0 & 0 \\
\{1,4\}\{2,3\}\{5,6\} &0 & 0 & 1 & 0 & 0 & 1 & 0 & 0 & 0 & 0 & 0 & 0 & 0 & 0 & 1 \\
\{1,4\}\{2,5\}\{3,6\} &0 & 0 & 1 & 0 & 0 & 0 & 0 & 1 & 0 & 0 & 0 & 1 & 0 & 0 & 0 \\
\{1,4\}\{2,6\}\{3,5\} &0 & 0 & 1 & 0 & 0 & 0 & 0 & 0 & 1 & 0 & 1 & 0 & 0 & 0 & 0 \\
\{1,5\}\{2,3\}\{4,6\} &0 & 0 & 0 & 1 & 0 & 1 & 0 & 0 & 0 & 0 & 0 & 0 & 0 & 1 & 0 \\
\{1,5\}\{2,4\}\{3,6\} &0 & 0 & 0 & 1 & 0 & 0 & 1 & 0 & 0 & 0 & 0 & 1 & 0 & 0 & 0 \\
\{1,5\}\{2,6\}\{3,4\} &0 & 0 & 0 & 1 & 0 & 0 & 0 & 0 & 1 & 1 & 0 & 0 & 0 & 0 & 0 \\
\{1,6\}\{2,3\}\{4,5\} &0 & 0 & 0 & 0 & 1 & 1 & 0 & 0 & 0 & 0 & 0 & 0 & 1 & 0 & 0 \\
\{1,6\}\{2,4\}\{3,5\} &0 & 0 & 0 & 0 & 1 & 0 & 1 & 0 & 0 & 0 & 1 & 0 & 0 & 0 & 0 \\
\{1,6\}\{2,5\}\{3,4\} &0 & 0 & 0 & 0 & 1 & 0 & 0 & 1 & 0 & 1 & 0 & 0 & 0 & 0 & 0 \\
\end{pNiceMatrix}
$}
\]

We can compute the null space of $M^\top$ and verify that there is a 5-dimensional subspace $K\subset\mathbb{R}^{15}$ of elements $c$ such that $M^\top c =0$, and furthermore, each row of $K$ is orthogonal to $(1, \dots, 1)$. We can write the matrix associated to $K$ in Reduced Row Echelon Form as

\[
K=
\scalebox{0.8}{$
\begin{pmatrix}
1 & 0 & -1 & 0 & 0 & 0 & -1 & 0 & 1 & 0 & 1 & -1 & 1 & -1 & 0\\
0 & 1 & -1 & 0 & 0 & 0 & 0 & 0 & 0 & -1 & 1 & 0 & 1 & -1 & 0\\
0 & 0 & 0 & 1 & 0 & -1 & -1 & 0 & 1 & 0 & 0 & 0 & 1 & -1 & 0\\
0 & 0 & 0 & 0 & 1 & -1 & 0 & 0 & 0 & -1 & 0 & 1 & 1 & 0 & -1\\
0 & 0 & 0 & 0 & 0 & 0 & 0 & 1 & -1 & 0 & -1 & 1 & 0 & 1 & -1\\

\end{pmatrix}
$}
\]
For any $c = (c_\rho) \in \mathbb{R}^{15}$, we have $\tau = \sum_{\rho \in \mathfrak{R}_3} c_\rho \tau_\rho =0$ if and only if $c \in K$. Any invariant six-tensor on $\mathbb{R}^2$ can therefore be uniquely represented by an element in $K^\perp$. For a degree vector $\beta = (\beta_0, \vec{\beta})$ such that $|\beta|_E = 3$, look at all possible values of $\vec{\beta}$. An overview is found in Table~\ref{tab:degreeVectors}.

The action of $S_{\vec{\beta}}$ on $\mathfrak{R}_3$ induces an action of $\mathbb{R}^{15}$. If $\sum_{\rho \in \frakR_3} c_\rho \Gamma(\rho, \beta)$ is nonzero, then the average of $(c_\rho)$ under the action of $S_{\vec{\beta}}$ must be non-zero as well. A quick computation reveals that all elements in $K$ vanish after $S_{\vec{\beta}}$-averaging for every degree vector except $\vec{\beta}=[0,3]$ and $\vec{\beta}=[2,2]$. For the remaining cases we will find cases where $\sum_{\rho \in \frakR_p} c_\rho \Gamma(\rho, \beta) \neq 0$, but its image under $N$ will still be zero.

\paragraph{\bf Case $\vec{\beta}=[0,3]$.} 

\begin{figure}[h]
\centering
\begin{tikzpicture}[scale=0.7]

\begin{scope}[xshift=0cm]
  \node[circle, draw, fill=black, inner sep=2pt] (A) at (0, 1) {};
  \node[circle, draw, fill=black, inner sep=2pt] (B) at (1.73, 1) {};
  \node[circle, draw, fill=black, inner sep=2pt] (C) at (0.86, 0) {};

  \draw (A) to[bend left=12] (B);
  \draw (A) to[bend right=12] (B);
  \draw (C) to[out=60,in=120,loop] ();
  
  \node at (0.86,-1.0) {$N^{[0,3]}_{\gamma_1}=\lVert\nabla^2 f\rVert^2\cdot\Delta f$};
\end{scope}

\begin{scope}[xshift=8cm]
  \node[circle, draw, fill=black, inner sep=2pt] (A) at (0, 1) {};
  \node[circle, draw, fill=black, inner sep=2pt] (B) at (1.73, 1) {};
  \node[circle, draw, fill=black, inner sep=2pt] (C) at (0.86, 0) {};

  \draw (A) to (B);
  \draw (B) to (C);
  \draw (C) to (A);

  \node at (0.86,-1.0) {$N^{[0,3]}_{\gamma_2}=\tr (\nabla^2_{*_1,*_2}f) (\nabla^2_{*_1,*_3}f)(\nabla^2_{*_2,*_3}f)$};
\end{scope}

\begin{scope}[xshift=16cm]
  \node[circle, draw, fill=black, inner sep=2pt] (A) at (0, 1) {};
  \node[circle, draw, fill=black, inner sep=2pt] (B) at (1.73, 1) {};
  \node[circle, draw, fill=black, inner sep=2pt] (C) at (0.86, 0) {};

  \draw (A) to[out=60,in=120,loop] ();
  \draw (B) to[out=60,in=120,loop] ();
  \draw (C) to[out=60,in=120,loop] ();

  \node at (0.86,-1.0) {$N^{[0,3]}_{\gamma_3}=(\Delta f)^3$};
\end{scope}

\end{tikzpicture}
\caption{Multigraph representatives for the degree vector $\vec \beta=[0,3]$.}
\label{fig:representatives[0,3]}
\end{figure}

There are three possible corresponding graphs representatives as in Figure~\ref{fig:representatives[0,3]}. Summing the first row of $K$ over each the $S_{\vec{\beta}}$ action gives averaged coefficients $(1,-3,2)$, yielding

\[Q^{\beta_0}f=f^{\beta_0}\cdot(-3N^{[0,3]}_{\gamma_1}+2N^{[0,3]}_{\gamma_2}+N^{[0,3]}_{\gamma_3})=0.\]
However, for $d \geq 3$, we have that $Q^{\beta_0} = N(*^{\beta_0} \cup (-3\gamma_1 + 2 \gamma_2 + \gamma_3))$ must be non-zero.

\paragraph{\bf Case $\vec{\beta}=[2,2]$.} 

\begin{figure}[h]
\centering
\begin{tikzpicture}[scale=0.7]

\begin{scope}[xshift=0cm]
  \node[circle, draw, fill=black, inner sep=2pt] (A) at (0,0) {};
  \node[circle, draw, fill=black, inner sep=2pt] (B) at (1,0) {};
  \node[circle, draw, fill=black, inner sep=2pt] (C) at (0,1) {};
  \node[circle, draw, fill=black, inner sep=2pt] (D) at (1,1) {};

  \draw (A) to (B);
  \draw (C) to[out=60,in=120,loop] ();
  \draw (D) to[out=60,in=120,loop] ();
  
  \node at (0.5,-1.0) {$N^{[2,2]}_{\gamma_1}=\lVert\nabla f\rVert^2 \cdot (\Delta f)^2$};
\end{scope}

\begin{scope}[xshift=8cm]
  \node[circle, draw, fill=black, inner sep=2pt] (A) at (0,0) {};
  \node[circle, draw, fill=black, inner sep=2pt] (B) at (1,0) {};
  \node[circle, draw, fill=black, inner sep=2pt] (C) at (0,1) {};
  \node[circle, draw, fill=black, inner sep=2pt] (D) at (1,1) {};

  \draw (C) to[bend left=12] (D);
  \draw (C) to[bend right=12] (D);
  \draw (A) to (B);

  \node at (0.5,-1.0) {$N^{[2,2]}_{\gamma_2}=\lVert\nabla f\rVert^2\cdot\lVert\nabla^2 f\rVert^2$};
\end{scope}

\begin{scope}[xshift=0cm, yshift=-35mm]
  \node[circle, draw, fill=black, inner sep=2pt] (A) at (0,0) {};
  \node[circle, draw, fill=black, inner sep=2pt] (B) at (1,0) {};
  \node[circle, draw, fill=black, inner sep=2pt] (C) at (0,1) {};
  \node[circle, draw, fill=black, inner sep=2pt] (D) at (1,1) {};

  \draw (B) to (D);
  \draw (A) to (C);
  \draw (C) to (D);

  \node at (0.5,-1.0) {$N^{[2,2]}_{\gamma_3}=\lVert\nabla^2 f(\nabla f, \cdot)\rVert^2$};
\end{scope}

\begin{scope}[xshift=8cm, yshift=-35mm]
  \node[circle, draw, fill=black, inner sep=2pt] (A) at (0,0) {};
  \node[circle, draw, fill=black, inner sep=2pt] (B) at (1,0) {};
  \node[circle, draw, fill=black, inner sep=2pt] (C) at (0,1) {};
  \node[circle, draw, fill=black, inner sep=2pt] (D) at (1,1) {};

  \draw (A) to (C);
  \draw (B) to (C);
  \draw (D) to[out=60,in=120,loop] ();

  \node at (0.5,-1.0) {$N^{[2,2]}_{\gamma_4}=\nabla^2 f(\nabla f, \nabla f)\cdot \Delta f$};
\end{scope}

\end{tikzpicture}
\caption{Multigraph representatives associated to the orbits for the degree vector $\vec \beta =[2,2]$.}
\label{fig:representatives[2,2]}

\end{figure}

There are four possible graphs as in Figure~\ref{fig:representatives[2,2]}. Summing the first row of $K$ over $S_{\vec{\beta}}$ gives averaged coefficients $(1,-1,2,-2)$, yielding
\[P^{\beta_0}f=f^{\beta_0}\cdot(N^{[2,2]}_{\gamma_1} - N^{[2,2]}_{\gamma_2} + 2N^{[2,2]}_{\gamma_3} - 2N^{[2,2]}_{\gamma_4}) = 0,\]
but with $P^{\beta_0} = N(*^{\beta_0} \cup (\gamma_1 - \gamma_2 + 2 \gamma_3 - 2 \gamma_4)$ is nonvanishing for $d \geq 3$.

In both cases, the remaining kernel rows give coefficient vectors proportional to these when averaging over $S_{\vec{\beta}}$, so each degree vector produces exactly one independent vanishing operator. All of these observations in summary give the proof of Theorem~\ref{th:PQvanishes}.

The two vanishing operators $P^{\beta_0}$ and $Q^{\beta_0}$ identified above are not merely computational curiosities: they represent the complete obstruction to injectivity of the map $N$ in the first dimension where Iwahori's basis theorem fails. Hence, if we want to write a basis of equivariant nonlinear operators up to total order six, we need these relations. We can continue this procedure for any dimension as follows:
\begin{enumerate}[$\bullet$]
\item When $p > d$, look at the kernel $K$ of the mapping from $\spn_{\mathbb{R}} \frakR_p \to \calT_d^{2p}$ given by $\sum_{\rho \in \frakR_p} c_\rho \rho \mapsto \sum_{\rho\in \frakR_p} c_\rho \tau_\rho$.
\item For every $\beta =(\beta_0, \vec{\beta}) \in \DV$ with $|\beta|_E = p$, consider $K^{S_{\vec{\beta}}}$. For any nonzero $w =\sum_{\rho \in \frakR_p} c_\rho \rho \neq 0$ in $K^{S_{\vec{\beta}}}$, we get a linear dependence relation
$$\sum_{\rho \in \frakR_p} c_\rho \tilde N(\rho, \beta) =0,$$
will be a dimension-dependent relation between nonlinear operators.
\end{enumerate}

\begin{remark}
The vanishing of $Q^{\beta_0}$ on two-dimensional manifolds has a classical explanation via the Cayley--Hamilton theorem. In dimension $d=2$, the Hessian $A = \nabla^2 f$ is a $2\times 2$ symmetric matrix with characteristic polynomial $\chi_A(\lambda) = \lambda^2 - (\tr A)\lambda + \det A$, so Cayley--Hamilton gives $A^2 = (\tr A)A - (\det A)I$. Taking the trace of the identity $A^3 =(\tr A)A^2 - (\det A)A$ and using Newton's identity $\det A = \tfrac{1}{2}\bigl[(\tr A)^2 - \tr(A^2)\bigr]$ yields
\[
    \tr(A^3) \;=\; \tfrac{3}{2}(\tr A)\tr(A^2) - \tfrac{1}{2}(\tr A)^3.
\]
Substituting $\tr A = \Delta f$, $\tr(A^2) = \|\nabla^2 f\|^2$ and $\tr(A^3) = \tr\bigl((\nabla^2 f)^3\bigr)$, this rearranges to
\[
    (\Delta f)^3 - 3\|\nabla^2 f\|^2\,\Delta f + 2\tr\bigl((\nabla^2 f)^3\bigr) = 0,
\]
which is equivalent to $Q^{\beta_0}f = 0$.
\end{remark}

\subsection{Remarks on less symmetry} \label{sec:Lesssymmetry}
We consider the case when we are considering tensors that are only invariant with respect to the connected isometry group. Following  \cite{iwahori1958some,weyl1946classical}, we are now considering the space of $p$-tensors $\hat \calT_d^p$ on $\mathbb{R}^d$ that are invariant under $\SO(d)$. Clearly, $\calT_d^p \subset \hat \calT_d^{p}$ in the notation of Section~\ref{sec:Tensors}, but we will have the following tensors in addition. Let $\hat \frakR_{p,d}$ be the collection of unordered pairs $\hat \rho = \{ \{ i_1, i_2\}, \dots, \{i_{2p-1}, i_{2p}\}\}$ where each integer appears at most only once, but now the integers may be taken from $1, \dots,2p+d$, meaning that there is a set $\hat \rho^\perp = \{ j_1 <\dots< j_d\}$ of elements not appearing in any pair.
We can then define $\hat \tau_{\hat \rho} \in \hat \calT^{2p+d}$, $\hat \rho \in \hat\frakR_{q,d}$ by
$$\hat \tau_{\hat \rho}(x_1, \dots, x_{2p+d}) = \det(x_{j_1}, \dots, x_{j_d}) \cdot \prod_{\{ i,i_2\} \in \rho} \langle x_{i_1}, x_{i_2} \rangle, \qquad \hat \rho^\perp = \{ j_1 <\dots< j_d\} .$$
If $M$ is oriented with volume form $\omega$, we can define the corresponding map on manifolds,
$$\hat T_{\hat \rho}(x_1, \dots, x_{2p+d}) = \omega(x_{j_1}, \dots, x_{j_d}) \cdot \prod_{\{ i,i_2\} \in \rho} \langle x_{i_1}, x_{i_2} \rangle_g, \qquad \hat \rho^\perp = \{ j_1 <\dots< j_d\} ,$$
Let $\beta \in\hDV$ be pseudo-degree vector with $|\beta|_{E} = \frac{1}{2}(2p+d)$ and with $\hat \rho \in \hat \frakR_{p,d}$. We define
$$\hat N(\hat \rho, \beta)f = \nabla^{\beta}f(\sharp T_{\hat \rho}),$$
which is a differential operator of total order $2p+d$.

In order to construct the analogue of the multigraph representation, from the pair $(\hat \rho, \beta)$, if $|\beta|_V = n$, then we consider the map $m_{\beta}: \{ 1, \dots, 2p+d\} \to \{ v_1, \dots, v_n\}$ as earlier, and define edges $E=\{ \{ m(i), m(i_2) \, : \, \{ i, i_2 \} \in \hat \rho \}$, giving us a graph $\gamma = (V, E)$. Introduce then also a map $\phi: \{ 1, \dots, d\}\to V$ by $\phi(r) = m_{\beta}(j_r)$.
Notice that if $\phi(r_1) = \phi(r_2)$ for $r_1 \neq r_2$, we get $\hat N(\hat \rho, \beta) =0$, as part of the inner product $\langle \nabla^{\beta}f, T_{\hat \rho} \rangle_g$ will involve a part where a symmetric tensor is multiplied by a skew-symmetric tensor. Hence, we can restrict ourselves to the case where $\phi$ is injective. We can then define the following classifying space, using two equivalence relations.
\begin{enumerate}[$\bullet$]
\item Consider pair $(\phi, \gamma)$ where $\gamma$ is a multigraph and $\phi$ is an injective map into the vertex set~$V[\gamma]$. If $\gamma$ and $\tilde \gamma$ are isomorphic by the map $\Phi: V[\gamma] \to V[\tilde \gamma]$, we define an equivalence relation $\sim$ by $(\phi, \gamma) \sim (\phi \circ \Phi, \tilde \gamma)$. Define $\widehat{\mathrm{ExtMG}} = \spn_{\mathbb{R}} \{[\phi,\gamma]_\sim\}$, where the set is spanned over all equivalent classes.
\item Define $[\phi, \gamma] = [\phi,\gamma]_{\simeq}$ as the equivalence class of element $[\phi, \gamma]_\sim$ in $\mathrm{ExtMG} = \widehat{\mathrm{ExtMG}}/\simeq$, where we define $[\phi \circ \sigma , \gamma]_{\sim} \simeq (-1)^{\mathrm{sign}(\sigma)} [\phi, \gamma]_{\sim}$ for any $\sigma \in S_d$.
\end{enumerate}
Let
$$\hat N([\phi,\gamma])f = \tr_E \tr_\phi \nabla^{\phi,V} f, \qquad f\in C^\infty(M),$$
where
$$\nabla^{\phi,V} f = \nabla^{\phi,v_1} f \otimes \cdots \otimes \nabla^{\phi,v_n}f, \qquad \text{ with  } \qquad \nabla^{\phi,v_i} f = \nabla^{\deg(v_i)+|\phi^{-1}(v_i)|,\Sym} f,$$
$\tr_E$ takes traces between $\nabla^{\phi,v_{i_1}}$ and $\nabla^{\phi,v_{i_2}}$ for every edge $e= \{ v_{i_1}, v_{i_2} \}$, and $\tr_{\phi}$ takes the trace of the $j$-th position of $\omega$ and of $\nabla^{\phi, \phi(j)}$ for any $j =1, \dots, d$.

By the same proof as for Theorem~\ref{th:main}, the map $\hat N$ will be surjective whenever the isometry group acts transitively on oriented orthonormal frame bundle $\SO(M) \to M$ consisting of frames with positive orientation. By \cite[Notes 10]{kobayashi1996foundations}, these are, up to scaling, the same Riemannian model spaces $\mathbb{R}^n$, $\mathbb{S}^n$ and $\mathbb{H}^n$, but the result now also holds for scalings of the real projective space $\mathbb{RP}^n$.

\begin{example} \label{ex:SO}
\begin{enumerate}[\rm (a)]
\item In dimension $d=2$, we have that $\omega(\xi_1, \xi_2) = \langle J\xi_1 , \xi_2 \rangle$ where $J$ is rotation by $\frac{\pi}{2}$ in a positively oriented frame. We then can consider the operator $\hat N([\phi, \gamma])f = \nabla^2 f(J\nabla f, \nabla f) = \tr \nabla^2 f(\ast _1,\ast _2) \omega(\ast _3,\ast _1) \nabla f(\ast _3) \nabla f(\ast _2)$, where the traces are taken in positions of equal numbers. Then $\gamma$ can be given as the multigraph with $V_\gamma = \{v_1, v_2, v_3\}$, $E_\gamma = \{\{ v_1, v_2\}, \{ v_2, v_3\}\}$ with $\phi(1) = v_1$ and $\phi(2) = v_2$.
\item In dimension $d=3$, we can define a cross product by $\omega(\xi_1, \xi_2, \xi_3) \} = \langle \xi_1 \times \xi_2, \xi_3 \rangle$. We can then consider an operator defined using $\hat N([\phi, \gamma]) f= \langle \nabla f \times \sharp \nabla^2 f(\nabla f, \cdot), \nabla \Delta f\rangle_g$, where $V_\gamma = \{ v_1, v_2, v_3, v_4\}$, $E_\gamma = \{\{ v_2,v_3\}, \{ v_4, v_4\}\}$ and $\phi(1)=v_1$, $\phi(2) = v_2$, $\phi(3) = v_4$.
\end{enumerate}
\end{example}

\begin{figure}[h]
\centering
\begin{tikzpicture}[scale=1.0]

\begin{scope}[xshift=0cm]
  \node[circle, draw, fill=black, inner sep=2pt] (A) at (0, 1) {};
  \node[circle, draw=red, fill=red, inner sep=2pt, label=below left:$\omega$] (B) at (0, 0) {};
  \node[circle, draw, fill=black, inner sep=2pt] (C) at (1, 0) {};
  \node[circle, draw, fill=black, inner sep=2pt] (D) at (1, 1) {};

  \draw[
    red,
    postaction={
        decorate,
        decoration={
            markings,
            mark=at position 0.6 with {\arrow{>}}
        }
    }
  ] (A) to (B);
  \draw[
    red,
    postaction={
        decorate,
        decoration={
            markings,
            mark=at position 0.6 with {\arrow{>}}
        }
    }
  ] (B) to (C);
  \draw (C) to (D);
  
  \node at (0.5,-1.0) {$(a)$};
\end{scope}

\begin{scope}[xshift=4cm]
  \node[circle, draw, fill=black, inner sep=2pt] (A) at (0, 0) {};
  \node[circle, draw=red, fill=red, inner sep=2pt, label=below:$\omega$] (B) at (1, 0) {};
  \node[circle, draw, fill=black, inner sep=2pt] (C) at (2, 0) {};
  \node[circle, draw, fill=black, inner sep=2pt] (D) at (1, 1) {};
  \node[circle, draw, fill=black, inner sep=2pt] (E) at (2, 1) {};

  \draw[
    red,
    postaction={
        decorate,
        decoration={
            markings,
            mark=at position 0.6 with {\arrow{>}}
        }
    }
  ] (A) to (B);
  \draw[
    red,
    postaction={
        decorate,
        decoration={
            markings,
            mark=at position 0.6 with {\arrow{>}}
        }
    }
  ] (B) to (C);
  \draw[
    red,
    postaction={
        decorate,
        decoration={
            markings,
            mark=at position 0.6 with {\arrow{>}}
        }
    }
  ] (D) to (B);
  \draw (E) to (C);
  \draw (D) to[out=60,in=120,loop] ();

  \node at (1,-1.0) {$(b)$};
\end{scope}

\end{tikzpicture}
\caption{A graphical way to represent the operators of Example~\ref{ex:SO}. We have an edge for every trace, with the red edges going into the vertex $\omega$ denoting the traces with the volume form. These graphs are directed so that arrows pointing into $\omega$ denotes taking a trace at an odd argument, while an arrow away from~$\omega$ denotes a trace in an even input. There must be $0$ or $1$ more arrows towards~$\omega$ when respectively~$d$ is even or odd. Interchanging direction of two arrows, one going to~$\omega$, one away, gives the same operator but with opposite signs. The polynomial degree equals the number of black vertices, and the total order equals the sum of all degrees of the same vertices.}
\label{fig:directed_graphs}
\end{figure}

\begin{remark}
One might be tempted to include a tensor with an even number of determinants to obtain something that is invariant under the full isometry group, but, as pointed out in \cite{weyl1946classical}, such tensors do not give us anything new, since
$$\det(x_1, \dots, x_d) \cdot \det(y_1, \dots, y_d) = \det(\langle x_i, y_j \rangle_{ij}) \in \spn \{ \tau_\rho \, : \, \rho \in \frakR_{d} \}.$$
\end{remark}

\subsection{Sub-Riemannian model spaces}
A sub-Riemannian manifold $(M,E,g)$ is a triple, where~$M$ is a connected manifold, $E \subseteq TM$ is a subbundle of $TM$ and $g$ is a metric tensor defined only on~$E$. In this section, we will assume that $E$ is \emph{bracket-generating, equiregular}, meaning that there exists a flag of subbundles
$$E =E^1 \subsetneq E^2 \subsetneq \cdots \subsetneq E^s = TM,$$
with $E^{i+1} = \spn\{ E^i, [E,E^i] \}$. We write $d_1$ for the rank of $E$. Note that the case $s=1$ equals a Riemannian manifold. By the bracket-generating condition, we can write any linear differential operators using only vector fields with values in $E$. We say that a linear differential operator $L$ has \emph{nonholonomic order} $k$ if $k$ is the minimal number of derivatives in the direction of $E$ needed for $L$, see \cite[Section~4.2]{bellaiche1996tangent} for details.

A connection $\nabla$ is said to be \emph{compatible} with the sub-Riemannian structure $(E,g)$ if parallel transport with respect to the connection takes orthonormal frames of $E$ to orthonormal frames. This is equivalent to requiring that for any $X, Y \in \Gamma(E)$ and $Z \in \Gamma(TM)$, then $\nabla_Z X$ will take values in $E$, and $Z\langle X, Y \rangle_g = \langle \nabla_Z X, Y \rangle + \langle Y, \nabla_Z Y \rangle_g$. We can also formulate this property using the sub-Riemannian \emph{cometric}. Let $\sharp: TM \to E$ be the map $\sharp \alpha(\xi) = \langle \xi , \sharp \alpha\rangle_g$ for $\alpha \in T^\ast M$, $\xi \in TM$, which will have a kernel when $s >1$. We can then define the (degenerate when $s >1$) \emph{sub-Riemannian cometric} $g^\ast $ by
$$\langle \alpha_1, \alpha_2 \rangle_{g^\ast } = \langle\sharp \alpha_1, \sharp \alpha_2 \rangle_g.$$
The connection $\nabla$ is compatible with the sub-Riemannian structure $(E,g)$ if and only if $\nabla g^\ast  =0$. For more on connections and sub-Riemannian structures, see, e.g., \cite{grong2026canonical}.

With such a connection, we can write any linear differential operator as linear combinations of operator
$$\nabla^k_{X} f= \nabla^kf(X), \qquad X\in \Gamma(E^{\otimes k}),$$
where the above has nonholonomic order $k$.
Note that we are not using the symmetric covariant derivatives here, since we are no longer ensured that alternating parts will have lower sub-Riemannian order. For example, if $X,Y \in \Gamma(E)$ and $[X,Y]$ does not take values in $E$, then $-\nabla_{Tor(X,Y)} f = \nabla^2_{X,Y} - \nabla_{Y,X}^2f$ will be a differential operator of nonholonomic order $2$, where $Tor$ is the torsion of $\nabla$. We note that $[X,Y]f$ equals $- \nabla_{Tor(X,Y)}f$ up to differential operators of nonholonomic order 1 by the compatibility of the connection.

We can define the principal bundle $\Ort(d_1) \to \Ort(E)\to M$ as the bundle of orthonormal frames $u: \mathbb{R}^{d_1} \to E_{x}$ of $E$, with $\Ort(d_1)$ acting on each fiber $\Ort(E)_x$, $x \in M$ on the right by composition. \emph{A sub-Riemannian isometry} $\varphi: M \to M$ is a diffeomorphism satisfying $\varphi_\ast (E) \subseteq E$ and mapping $E_x$ isometrically onto $E_{\varphi(x)}$ for any $x \in M$. Write $G_{sr} = G(M,E,g)$ for the group of sub-Riemannian isometries. We say that that $(M,E,g)$ is \emph{a sub-Riemannian model space in the sense of isometries} if $G_{sr}$ acts transitively on $\Ort(E)$. For more on such spaces, see \cite{grong2021model,berge2021g}. In particular, if $\nabla$ is a compatible connection on such a sub-Riemannian model space, then requiring that $\varphi_\ast  \nabla_{Y_1} Y_2 = \nabla_{\varphi_\ast  Y_1} \varphi_\ast  Y_2$ for any $Y_1, Y_2 \in \Gamma(E)$, $\varphi \in G_{sr}$ uniquely determines $\nabla_{Y_1} Y_2$ for $Y_1, Y_2$. See \cite[Prop~3.3.]{grong2021model} where this result is written in the language of partial connections. We will say that such a connection is \emph{equivariant} under $G_{sr}$. It follows that on a sub-Riemannian model space, the operator $\nabla^k_X$, $X \in \Gamma(E^{\otimes k})$ will be the same for every isometry-equivariant connection~$\nabla$.

For a degree vector $\beta = (\beta_0,\vec{\beta})$ of order $k$, define now
$$\nabla^{\beta} f = f^{\beta_0} \cdot  \underbrace{\nabla f \otimes \cdots \otimes \nabla f}_{\text{$\vec{\beta}(1)$ times}} \otimes \cdots \otimes \underbrace{\nabla^k f \otimes \cdots \otimes \nabla^k f}_{\text{$\vec{\beta}(k)$ times}} ,$$
that is, defined now with just the usual covariant derivative rather than the symmetrized ones. Also define $T_\rho \in \Gamma(TM^{2p})$, $\rho \in \frakR_p$ now as a tensor on covectors by
$$T_\rho(\alpha_1, \dots, \alpha) = \prod_{\{i,j\} \in \rho} \langle \alpha_i, \alpha_j\rangle_{g^\ast }.$$
Using the same argument as in the first part of the proof of Theorem~\ref{th:main} in Section~\ref{sec:ProofMain}, we have that $\NPDO(M)^{G_{sr}}$ is spanned by operators on the form
$$P =\tilde N(\rho, \beta): f \mapsto (\nabla^{\beta}f)(T_\rho),$$
with $\beta$ being a degree vector with $|\beta|_E = p$ and $\rho \in \frakR_{2p}$. We note that $|\beta|_V =n$ is still the polynomial degree of $P$, and if we define order and total order according nonholonomic order, then the order of $\beta$ and $2|\beta|_E = 2p$ are respectively the nonholonomic order and nonholonomic total order of $P$.

It follows $\NPDO(M)^{G_{sr}} = \spn \{\tilde N(\rho,\beta)\}$, but determining linear dependence relations between these operators is more difficult, since we no longer have the correspondence with multigraphs. Even determining the linear equivariant operators $f\mapsto \nabla^{2p}f(T_\rho)$ is a non-trivial task, which we will leave for future research, only including the simplest example below.

\begin{example}
If we look at the simplest non-Riemannian example, consider the three dimensional Lie algebra $\frakg_\kappa$, $\kappa \in \mathbb{R}$, with basis $X,Y, Z$ and relations
$$[X,Y] = Z, \qquad [Y,Z] = \kappa X, \qquad [Z,X] = \kappa Y.$$
We remark that the cases $\kappa >0$, $\kappa <0$ and $\kappa =0$ are isomorphic to respectively $\so(3) \cong \mathfrak{su}(2)$, $\mathfrak{sl}(2)$ and the Heisenberg algebra of dimension 3. Let $M_\kappa$ be the corresponding simply connected Lie group. We will also use $X,Y, Z$ for the corresponding left-invariant vector fields on $M_\kappa$. Define a sub-Riemannian structure $(E,g)$ on $M_\kappa$ by defining $X,Y$ to be an orthonormal basis. These spaces are then all the sub-Riemannian model spaces with a manifold of dimension $3$ and with $E$ of rank 2 up to isometry, see \cite{grong2021model} for details.

Left translations will be isometries of $(M_\kappa, E,g)$, but also for any $A = (A_{ij} )\in \Ort(2)$, there is a corresponding sub-Riemannian isometry $\varphi_A$ satisfying $\varphi_A(1) =1$ and
\begin{equation} \label{ASR}(\varphi_A)_\ast  X = A_{11} X + A_{21} Y, \qquad (\varphi_A)_\ast  Y = A_{12} X + A_{22} Y, \qquad (\varphi_A)_\ast  Z = \det(A) Z.\end{equation}
These two types of isometries together generate the group of isometries $G_{sr}$.
If we define $\nabla$ as the connection $\nabla X = \nabla Y = \nabla Z = 0$, then this connection will be equivariant under $G_{sr}$.

The sub-Laplacian
$$\Delta_{sr} f = \tr_{H} \nabla^2_{\ast ,\ast } f = (X^2 + Y^2)f = \nabla^2 f(T_{\{1,2\}}),$$
is equivariant with respect to $G_{sr}$, but also
$$Z^2 f = \nabla^4 f (T_{\{1,3\}, \{2,4\}} - T_{\{1,4\}\{2,3\}}).$$
These two operators commute since $[Z,\Delta_{sr}] =0$. Any left-invariant linear differential operators can be written as a polynomial in $X, Y, Z$, so we can see that elements invariant under the action in \eqref{ASR} can be written as $p(\Delta_{sr}, Z^2)$, where $p(x,y)$ is a polynomial in two variables, paralleling Helgason's result \cite{helgason1959differential} in the Riemannian case.

For nonlinear operators, we note that equivariant operators such as
$$f \mapsto Zf(X f Y\Delta_{sr} f - X \Delta_{sr} f Yf) = \nabla f \otimes \nabla^2 f \otimes \nabla^3 f(T_{\{1,2\}\{3,4\}\{5,6\}} -T_{\{1,3\}\{2,4\}\{5,6\}})$$
have no analogue in the Riemannian case.
\end{example}

\bibliographystyle{abbrv}
\bibliography{Bibliography}

\newpage
\appendix
\section{Tables}
\begin{table}[ht]
\centering
\begin{tabular}{%
  |m{15mm}
  m{30mm}
  c
  m{50mm}|
}

\hline
\# edges & Graph class  & Representative & Associated operator \\ 
\hline
$0$ &
empty graph &
$\emptyset$&
$1$ \\[0.5em]

$0$ &
isolated vertex &
\begin{tikzpicture}[scale=1.1]
  \node[circle, draw, fill=black, inner sep=2pt] (A) at (0,0) {};
\end{tikzpicture}&
$\nabla^0f = f$ \\[0.5em]

\hline

$1$ &
single edge &
\begin{tikzpicture}[scale=1.1]
  \node[circle, draw, fill=black, inner sep=2pt] (A) at (0,0) {};
  \node[circle, draw, fill=black, inner sep=2pt] (B) at (0.6,0) {};
  \draw (A) -- (B);
\end{tikzpicture}&
$\lVert \nabla f\rVert^2$ \\[0.5em]

$1$ &
single loop &
\begin{tikzpicture}[scale=1.1]
  \node[circle, draw, fill=black, inner sep=2pt] (A) at (0,0) {};
  \draw (A) to[out=60,in=120,loop] ();
\end{tikzpicture}&
$\tr(\nabla^2f)=\Delta f$ \\

\hline
$2$ &
path of length two &
\begin{tikzpicture}[scale=1.1]
  \node[circle, draw, fill=black, inner sep=2pt] (A) at (0,0) {};
  \node[circle, draw, fill=black, inner sep=2pt] (B) at (0.6,0) {};
  \node[circle, draw, fill=black, inner sep=2pt] (C) at (1.2,0) {};
  \draw (A) to (B);
  \draw (B) to (C);
\end{tikzpicture}&
$\nabla^2 f(\nabla f, \nabla f)$ \\

$2$ &
double edge &
\begin{tikzpicture}[scale=1.1]
  \node[circle, draw, fill=black, inner sep=2pt] (A) at (0,0) {};
  \node[circle, draw, fill=black, inner sep=2pt] (B) at (1,0) {};
  \draw (A) to[bend left=12] (B);
  \draw (A) to[bend right=12] (B);
\end{tikzpicture}&
$\| \nabla^2 f\|^2$ \\

$2$ &
loop and edge &
\begin{tikzpicture}[scale=1.1]
  \node[circle, draw, fill=black, inner sep=2pt] (A) at (0,0) {};
  \node[circle, draw, fill=black, inner sep=2pt] (B) at (0.6,0) {};
  \draw (A) -- (B);
  \draw (A) to[out=60,in=120,loop] ();
\end{tikzpicture} &
$\langle \nabla \Delta f, \nabla f \rangle$ \\

$2$ &
double loop &
\begin{tikzpicture}[scale=1.1]
  \node[circle, draw, fill=black, inner sep=2pt] (A) at (0,0) {};

  \draw (A) to[out=20,in=80,loop] ();
  \draw (A) to[out=100,in=160,loop] ();
\end{tikzpicture}&
$\Delta^2 f$ \\
\hline

$3$ &
path of length three &
\begin{tikzpicture}[scale=1.1]
  \node[circle, draw, fill=black, inner sep=2pt] (A) at (0,0) {};
  \node[circle, draw, fill=black, inner sep=2pt] (B) at (0.6,0) {};
  \node[circle, draw, fill=black, inner sep=2pt] (C) at (1.2,0) {};
  \node[circle, draw, fill=black, inner sep=2pt] (D) at (1.8,0) {};
  \draw (A) to (B);
  \draw (B) to (C);
  \draw (C) to (D);
\end{tikzpicture}&
$\|\nabla^2f(\nabla f, \cdot)\|^2$ \\

$3$ &
star &
\begin{tikzpicture}[scale=1.1]
  \node[circle, draw, fill=black, inner sep=2pt] (A) at (0,0) {};
  \node[circle, draw, fill=black, inner sep=2pt] (B) at (0.5,0) {};
  \node[circle, draw, fill=black, inner sep=2pt] (C) at (1,0) {};
  \node[circle, draw, fill=black, inner sep=2pt] (D) at (0.5,0.5) {};
  \draw (A) to (B);
  \draw (B) to (C);
  \draw (B) to (D);
\end{tikzpicture}&
$\nabla^3f (\nabla f,\nabla f,\nabla f) $\\

$3$ &
triangle &
\begin{tikzpicture}[scale=1.1]
  \node[circle, draw, fill=black, inner sep=2pt] (A) at (0,0) {};
  \node[circle, draw, fill=black, inner sep=2pt] (B) at (0.6,0) {};
  \node[circle, draw, fill=black, inner sep=2pt] (C) at (0.3,0.5196) {};
  \draw (A) to (B);
  \draw (B) to (C);
  \draw (C) to (A);
\end{tikzpicture}&
$\tr (\nabla^2_{*_1,*_2}f) (\nabla^2_{*_1,*_3}f)(\nabla^2_{*_2,*_3}f)$\\

$3$ &
path of length two with a double edge &
\begin{tikzpicture}[scale=1.1]
  \node[circle, draw, fill=black, inner sep=2pt] (A) at (0,0) {};
  \node[circle, draw, fill=black, inner sep=2pt] (B) at (0.6,0) {};
  \node[circle, draw, fill=black, inner sep=2pt] (C) at (1.2,0) {};
  \draw (A) to[bend left=12] (B);
  \draw (A) to[bend right=12] (B);
  \draw (B) to (C);
\end{tikzpicture}&
$\langle \nabla^{3, \Sym} f, \nabla f \otimes \nabla^2 f \rangle_{g^*}$ \\

$3$ &
path of length two and one loop on the side &
\begin{tikzpicture}[scale=1.1]
  \node[circle, draw, fill=black, inner sep=2pt] (A) at (0,0) {};
  \node[circle, draw, fill=black, inner sep=2pt] (B) at (0.6,0) {};
  \node[circle, draw, fill=black, inner sep=2pt] (C) at (1.2,0) {};
  \draw (A) -- (B);
  \draw (B) -- (C);
  \draw (A) to[out=60,in=120,loop] ();
\end{tikzpicture}&
$\nabla^2 f(\nabla \Delta f, \nabla f)$ \\

$3$ &
path of length two and loop in the middle &
\begin{tikzpicture}[scale=1.1]
  \node[circle, draw, fill=black, inner sep=2pt] (A) at (0,0) {};
  \node[circle, draw, fill=black, inner sep=2pt] (B) at (0.6,0) {};
  \node[circle, draw, fill=black, inner sep=2pt] (C) at (1.2,0) {};
  \draw (A) -- (B);
  \draw (B) -- (C);
  \draw (B) to[out=60,in=120,loop] ();
\end{tikzpicture}&
$\nabla^2\Delta f(\nabla f, \nabla f) $ \\

$3$ &
triple-edge &
\begin{tikzpicture}[scale=1.1]
  \node[circle, draw, fill=black, inner sep=2pt] (A) at (0,0) {};
  \node[circle, draw, fill=black, inner sep=2pt] (B) at (1.0,0) {};
  \draw (A) to (B);
  \draw (A) to[bend left=20] (B);
  \draw (A) to[bend right=20] (B);
\end{tikzpicture}&
$\lVert \nabla^{3,\Sym}f\rVert^2$ \\

$3$ &
double-edge with a loop &
\begin{tikzpicture}[scale=1.1]
  \node[circle, draw, fill=black, inner sep=2pt] (A) at (0,0) {};
  \node[circle, draw, fill=black, inner sep=2pt] (B) at (1,0) {};
  \draw (A) to[bend left=12] (B);
  \draw (A) to[bend right=12] (B);
  \draw (A) to[out=60,in=120,loop] ();
\end{tikzpicture}&
$\langle\nabla^2\Delta f, \nabla^2 f\rangle$\\

$3$ &
one edge and one loop per side &
\begin{tikzpicture}[scale=1.1]
  \node[circle, draw, fill=black, inner sep=2pt] (A) at (0,0) {};
  \node[circle, draw, fill=black, inner sep=2pt] (B) at (0.6,0) {};
  \draw (A) -- (B);
  \draw (A) to[out=60,in=120,loop] ();
  \draw (B) to[out=60,in=120,loop] ();
\end{tikzpicture} &
$\lVert \nabla \Delta f \rVert^2$ \\

$3$ &
edge and two loops on the same side &
\begin{tikzpicture}[scale=1.1]
  \node[circle, draw, fill=black, inner sep=2pt] (A) at (0,0) {};
  \node[circle, draw, fill=black, inner sep=2pt] (B) at (0.6,0) {};

  \draw (A) to (B);
  \draw (A) to[out=20,in=80,loop] ();
  \draw (A) to[out=100,in=160,loop] ();
\end{tikzpicture}&
$\langle\nabla\Delta^2f, \nabla f\rangle$\\

$3$ &
triple loop &
\begin{tikzpicture}[scale=1.1]
  \node[circle, draw, fill=black, inner sep=2pt] (A) at (0,0) {};

  \draw (A) to[out=0,in=60,loop] ();
  \draw (A) to[out=60,in=120,loop] ();
  \draw (A) to[out=120,in=180,loop] ();
\end{tikzpicture}&
$\Delta^3f$\\

\hline
\end{tabular}
\caption{
All isomorphism classes of multigraphs of order $|\gamma| \le 3$ and their associated
invariant differential operators on the flat space. The same classes also hold for for non-flat spaces up to lower-order terms.
}
\end{table} \label{sec:GraphDiffO}
\newpage
\begin{table}[H]
\centering
\begin{tabular}{%
  |m{15mm}
  m{35mm}
  m{40mm}
  m{20mm}|
}

\hline
\# nodes & Graph class  & Representative & Symmetry group \\ 
\hline
$1$ &
node &
\begin{tikzpicture}[scale=1.1]
  \node[circle, draw, fill=black, inner sep=2pt, label={$2n_1$}] (A) at (0,0) {};
\end{tikzpicture}&
$Id$ \\[0.5em]

\hline

$2$ &
path &
\begin{tikzpicture}[scale=1.1]
  \node[circle, draw, fill=black, inner sep=2pt, label={$2n_1$}] (A) at (0,0) {};
  \node[circle, draw, fill=black, inner sep=2pt, label={$2n_2$}] (B) at (1.0,0) {};

  \draw (A) -- (B) node [midway, below] {$e_1$};
\end{tikzpicture}&
$\mathbb Z_2$\\[0.5em]

\hline

$3$ &
path&
\begin{tikzpicture}[scale=1.1]
  \node[circle, draw, fill=black, inner sep=2pt, label={$2n_1$}] (A) at (0,0) {};
  \node[circle, draw, fill=black, inner sep=2pt, label={$2n_2$}] (B) at (0.8,0) {};
  \node[circle, draw, fill=black, inner sep=2pt, label={$2n_3$}] (C) at (1.6,0) {};

  \draw (A) -- (B) node [midway, below] {$e_1$};
  \draw (B) -- (C) node [midway, below] {$e_2$};
\end{tikzpicture}  &
$\mathbb Z_2$\\

$3$ &
triangle &
\begin{tikzpicture}[scale=1.1]
  \node[circle, draw, fill=black, inner sep=2pt, label=left:{$2n_1$}] (A) at (0,0) {};
  \node[circle, draw, fill=black, inner sep=2pt, label=right:{$2n_2$}] (B) at (1.0,0) {};
  \node[circle, draw, fill=black, inner sep=2pt, label={$2n_3$}] (C) at (0.5,0.866025) {};

  \draw (A) -- (B) node [midway, below] {$e_1$};
  \draw (B) -- (C) node [midway, right] {$e_2$};
  \draw (C) -- (A) node [midway, left] {$e_3$};
\end{tikzpicture} &
$S_3\cong D_3$\\

\hline

$4$ &
path &
\begin{tikzpicture}[scale=1.1]
  \node[circle, draw, fill=black, inner sep=2pt, label={$2n_1$}] (A) at (0,0) {};
  \node[circle, draw, fill=black, inner sep=2pt, label={$2n_2$}] (B) at (0.8,0) {};
  \node[circle, draw, fill=black, inner sep=2pt, label={$2n_3$}] (C) at (1.6,0) {};
  \node[circle, draw, fill=black, inner sep=2pt, label={$2n_4$}] (D) at (2.4,0) {};

  \draw (A) -- (B) node [midway, below] {$e_1$};
  \draw (B) -- (C) node [midway, below] {$e_2$};
  \draw (C) -- (D) node [midway, below] {$e_3$};
\end{tikzpicture} &
$\mathbb Z_2$\\

$4$ &
star &
\begin{tikzpicture}[scale=1.1]
  \node[circle, draw, fill=black, inner sep=2pt, label=left:{$2n_1$}] (A) at (0,0) {};
  \node[circle, draw, fill=black, inner sep=2pt, label=below:{$2n_4$}] (B) at (0.8,0) {};
  \node[circle, draw, fill=black, inner sep=2pt, label=right:{$2n_2$}] (C) at (1.6,0) {};
  \node[circle, draw, fill=black, inner sep=2pt, label=above:{$2n_3$}] (D) at (0.8,0.8) {};

  \draw (A) -- (B) node [midway, above] {$e_1$};
  \draw (B) -- (C) node [midway, below] {$e_2$};
  \draw (B) -- (D) node [midway, right] {$e_3$};
\end{tikzpicture} &
$S_3\cong D_3$\\

$4$ &
cycle &
\begin{tikzpicture}[scale=1.1]
  \node[circle, draw, fill=black, inner sep=2pt, label=left:{$2n_1$}] (A) at (0,0) {};
  \node[circle, draw, fill=black, inner sep=2pt, label=right:{$2n_2$}] (B) at (0.8,0) {};
  \node[circle, draw, fill=black, inner sep=2pt, label=right:{$2n_3$}] (C) at (0.8,0.8) {};
  \node[circle, draw, fill=black, inner sep=2pt, label=left:{$2n_4$}] (D) at (0,0.8) {};

  \draw (A) -- (B) node [midway, below] {$e_1$};
  \draw (B) -- (C) node [midway, right] {$e_2$};
  \draw (C) -- (D) node [midway, above] {$e_3$};
  \draw (D) -- (A) node [midway, left] {$e_4$};
\end{tikzpicture} &
$D_4$\\

$4$ &
path with pendant &
\begin{tikzpicture}[scale=1.1]
  \node[circle, draw, fill=black, inner sep=2pt, label={$2n_1$}] (A) at (0,0) {};
  \node[circle, draw, fill=black, inner sep=2pt, label={$2n_2$}] (B) at (-0.866025,0.5) {};
  \node[circle, draw, fill=black, inner sep=2pt, label=below:{$2n_3$}] (C) at (-0.866025, 0-0.5) {};
  \node[circle, draw, fill=black, inner sep=2pt, label={$2n_4$}] (D) at (1,0) {};

  \draw (A) -- (B) node [midway, below] {$e_1$};
  \draw (B) -- (C) node [midway, left] {$e_2$};
  \draw (C) -- (A) node [midway, below] {$e_3$};
  \draw (A) -- (D) node [midway, below] {$e_4$};
\end{tikzpicture}&
$\mathbb Z_2$\\

$4$ &
square with diagonal &
\begin{tikzpicture}[scale=1.1]
  \node[circle, draw, fill=black, inner sep=2pt, label=left:{$2n_1$}] (A) at (0,0) {};
  \node[circle, draw, fill=black, inner sep=2pt, label=right:{$2n_2$}] (B) at (0.8,0) {};
  \node[circle, draw, fill=black, inner sep=2pt, label=right:{$2n_3$}] (C) at (0.8,0.8) {};
  \node[circle, draw, fill=black, inner sep=2pt, label=left:{$2n_4$}] (D) at (0,0.8) {};

  \draw (A) -- (B) node [midway, below] {$e_1$};
  \draw (B) -- (C) node [midway, right] {$e_2$};
  \draw (C) -- (D) node [midway, above] {$e_3$};
  \draw (D) -- (A) node [midway, left] {$e_4$};
  \draw (A) -- (C) node [midway, below] {$e_5$};
\end{tikzpicture} &
$\mathbb Z_2 \times \mathbb Z_2$ \\

$4$ &
complete graph &
\begin{tikzpicture}[scale=1.1]
  \node[circle, draw, fill=black, inner sep=2pt, label=left:{$2n_1$}] (A) at (0,0) {};
  \node[circle, draw, fill=black, inner sep=2pt, label=right:{$2n_2$}] (B) at (0.8,0) {};
  \node[circle, draw, fill=black, inner sep=2pt, label=right:{$2n_3$}] (C) at (0.8,0.8) {};
  \node[circle, draw, fill=black, inner sep=2pt, label=left:{$2n_4$}] (D) at (0,0.8) {};

  \draw (A) -- (B) node [midway, below] {$e_1$};
  \draw (B) -- (C) node [midway, right] {$e_2$};
  \draw (C) -- (D) node [midway, above] {$e_3$};
  \draw (D) -- (A) node [midway, left] {$e_4$};
  \draw (A) -- (C) node [midway, below] {$e_5$};
  \draw (B) -- (D) node [midway, above] {$e_6$};

\end{tikzpicture}&
$S_4$ \\

\hline
\end{tabular}
\caption{
Classes of connected weighted graphs with corresponding symmetry groups. The symmetry groups listed are the automorphism groups of the underlying unweighted graphs; the symmetry group of a weighted graph is the corresponding stabilizer subgroup.
}
\label{tab:weighter_graphs}
\end{table}
\newpage
\begin{table}[H]
    \centering
    \begin{tabular}{|>{\centering\arraybackslash}m{25mm}|>{\centering\arraybackslash}m{50mm}|>{\centering\arraybackslash}m{35mm}|}
        \hline
        $\vec{\beta}$ &  Graphs & $S_{\vec{\beta}}$ \\
        \hline
        $[0,0,0,0,0,1]$ & \begin{tikzpicture}[scale=0.4, 
                    baseline=(current bounding box.center),
                    execute at end picture={
                        \path[use as bounding box]
                        ([yshift=-10pt]current bounding box.south west)
                        rectangle
                        ([yshift=10pt]current bounding box.north east);
                    }]

\begin{scope}[xshift=0mm]
  \node[circle, draw, fill=black, inner sep=2pt] (A) at (0, 0) {};

  \draw (A) to[out=120,in=180,loop] ();
  \draw (A) to[out=240,in=300,loop] ();
  \draw (A) to[out=0,in=60,loop] ();
\end{scope}

\end{tikzpicture} & $S_6$ \\
        \hline
        $[0,0,2]$       & \begin{tikzpicture}[scale=0.4, 
                    baseline=(current bounding box.center),
                    execute at end picture={
                        \path[use as bounding box]
                        ([yshift=-10pt]current bounding box.south west)
                        rectangle
                        ([yshift=10pt]current bounding box.north east);
                    }]

\begin{scope}[xshift=0mm]
  \node[circle, draw, fill=black, inner sep=2pt] (A) at (0, 0) {};
  \node[circle, draw, fill=black, inner sep=2pt] (B) at (1, 0) {};

  \draw (A) to (B);
  \draw (A) to[out=60,in=120,loop] ();
  \draw (B) to[out=60,in=120,loop] ();
\end{scope}

\begin{scope}[xshift=35mm]
  \node[circle, draw, fill=black, inner sep=2pt] (A) at (0, 0) {};
  \node[circle, draw, fill=black, inner sep=2pt] (B) at (1.5, 0) {};

  \draw (A) to[bend left=18] (B);
  \draw (A) to[bend right=18] (B);
  \draw (A) to (B);
\end{scope}

\end{tikzpicture} & $(S_3\times S_3)\rtimes S_2$\\
        \hline
        $[0,1,0,1]$     & \begin{tikzpicture}[scale=0.4, 
                    baseline=(current bounding box.center),
                    execute at end picture={
                        \path[use as bounding box]
                        ([yshift=-10pt]current bounding box.south west)
                        rectangle
                        ([yshift=10pt]current bounding box.north east);
                    }]

\begin{scope}[xshift=0mm]
  \node[circle, draw, fill=black, inner sep=2pt] (A) at (0, 0) {};
  \node[circle, draw, fill=black, inner sep=2pt] (B) at (1, 0) {};

  \draw (A) to[out=60,in=120,loop] ();
  \draw (B) to[out=60,in=120,loop] ();
  \draw (B) to[out=240,in=300,loop] ();
\end{scope}

\begin{scope}[xshift=35mm]
  \node[circle, draw, fill=black, inner sep=2pt] (A) at (0, 0) {};
  \node[circle, draw, fill=black, inner sep=2pt] (B) at (1, 0) {};

  \draw (A) to[bend left=15] (B);
  \draw (A) to[bend right=15] (B);
  \draw (B) to[out=60,in=120,loop] ();
\end{scope}

\end{tikzpicture} & $S_2\times S_4$\\
        \hline
        $[1,0,0,0,1]$   & \begin{tikzpicture}[scale=0.4, 
                    baseline=(current bounding box.center),
                    execute at end picture={
                        \path[use as bounding box]
                        ([yshift=-10pt]current bounding box.south west)
                        rectangle
                        ([yshift=10pt]current bounding box.north east);
                    }]

\begin{scope}[xshift=0mm]
  \node[circle, draw, fill=black, inner sep=2pt] (A) at (0, 0) {};
  \node[circle, draw, fill=black, inner sep=2pt] (B) at (1, 0) {};

  \draw (A) to (B);
  \draw (B) to[out=30,in=90,loop] ();
  \draw (B) to[out=270,in=330,loop] ();
\end{scope}

\end{tikzpicture} & $S_5$\\
        \hline
        $[0,3]$         & \begin{tikzpicture}[scale=0.4, 
                    baseline=(current bounding box.center),
                    execute at end picture={
                        \path[use as bounding box]
                        ([yshift=-10pt]current bounding box.south west)
                        rectangle
                        ([yshift=10pt]current bounding box.north east);
                    }]

\begin{scope}[xshift=0cm]
  \node[circle, draw, fill=black, inner sep=2pt] (A) at (0, 1) {};
  \node[circle, draw, fill=black, inner sep=2pt] (B) at (1.73, 1) {};
  \node[circle, draw, fill=black, inner sep=2pt] (C) at (0.86, 0) {};

  \draw (A) to[bend left=12] (B);
  \draw (A) to[bend right=12] (B);
  \draw (C) to[out=60,in=120,loop] ();
\end{scope}

\begin{scope}[xshift=35mm]
  \node[circle, draw, fill=black, inner sep=2pt] (A) at (0, 1) {};
  \node[circle, draw, fill=black, inner sep=2pt] (B) at (1.73, 1) {};
  \node[circle, draw, fill=black, inner sep=2pt] (C) at (0.86, 0) {};

  \draw (A) to (B);
  \draw (B) to (C);
  \draw (C) to (A);
\end{scope}

\begin{scope}[xshift=70mm]
  \node[circle, draw, fill=black, inner sep=2pt] (A) at (0, 1) {};
  \node[circle, draw, fill=black, inner sep=2pt] (B) at (1.73, 1) {};
  \node[circle, draw, fill=black, inner sep=2pt] (C) at (0.86, 0) {};

  \draw (A) to[out=60,in=120,loop] ();
  \draw (B) to[out=60,in=120,loop] ();
  \draw (C) to[out=60,in=120,loop] ();
\end{scope}

\end{tikzpicture} & $(S_2\times S_2\times S_2)\rtimes S_3$\\
        \hline
        $[1,1,1]$       & \begin{tikzpicture}[scale=0.4, 
                    baseline=(current bounding box.center),
                    execute at end picture={
                        \path[use as bounding box]
                        ([yshift=-10pt]current bounding box.south west)
                        rectangle
                        ([yshift=10pt]current bounding box.north east);
                    }]

\begin{scope}[xshift=0mm]
  \node[circle, draw, fill=black, inner sep=2pt] (A) at (0, 0) {};
  \node[circle, draw, fill=black, inner sep=2pt] (B) at (1, 0) {};
  \node[circle, draw, fill=black, inner sep=2pt] (C) at (2, 0) {};

  \draw (A) to (B);
  \draw (B) to (C);
  \draw (C) to[out=60,in=120,loop] ();
\end{scope}

\begin{scope}[xshift=35mm]
  \node[circle, draw, fill=black, inner sep=2pt] (A) at (0, 0) {};
  \node[circle, draw, fill=black, inner sep=2pt] (B) at (1, 0) {};
  \node[circle, draw, fill=black, inner sep=2pt] (C) at (2, 0) {};

  \draw (A) to (B);
  \draw (B) to[bend left=15] (C);
  \draw (B) to[bend right=15] (C);
\end{scope}

\begin{scope}[xshift=70mm]
  \node[circle, draw, fill=black, inner sep=2pt] (A) at (0, 0) {};
  \node[circle, draw, fill=black, inner sep=2pt] (B) at (1, 0) {};
  \node[circle, draw, fill=black, inner sep=2pt] (C) at (2, 0) {};

  \draw (A) to (B);
  \draw (B) to[out=60,in=120,loop] ();
  \draw (C) to[out=60,in=120,loop] ();
\end{scope}

\end{tikzpicture} & $S_2\times S_3$\\
        \hline
        $[2,0,0,1]$     & \begin{tikzpicture}[scale=0.4, 
                    baseline=(current bounding box.center),
                    execute at end picture={
                        \path[use as bounding box]
                        ([yshift=-10pt]current bounding box.south west)
                        rectangle
                        ([yshift=10pt]current bounding box.north east);
                    }]

\begin{scope}[xshift=0mm]
  \node[circle, draw, fill=black, inner sep=2pt] (A) at (0, 1) {};
  \node[circle, draw, fill=black, inner sep=2pt] (B) at (1.4, 1) {};
  \node[circle, draw, fill=black, inner sep=2pt] (C) at (0.7, 0) {};

  \draw (A) to (B);
  \draw (C) to[out=-30,in=30,loop] ();
  \draw (C) to[out=150,in=210,loop] ();
\end{scope}

\begin{scope}[xshift=35mm]
  \node[circle, draw, fill=black, inner sep=2pt] (A) at (0, 1) {};
  \node[circle, draw, fill=black, inner sep=2pt] (B) at (0, 0) {};
  \node[circle, draw, fill=black, inner sep=2pt] (C) at (1.2, 0.5) {};

  \draw (A) to (C);
  \draw (B) to (C);
  \draw (C) to[out=60,in=120,loop] ();
\end{scope}

\end{tikzpicture} & $S_2\times S_4$\\
        \hline
        $[2,2]$         & \begin{tikzpicture}[scale=0.4, 
                    baseline=(current bounding box.center),
                    execute at end picture={
                        \path[use as bounding box]
                        ([yshift=-10pt]current bounding box.south west)
                        rectangle
                        ([yshift=10pt]current bounding box.north east);
                    }]

\begin{scope}[xshift=0mm]
  \node[circle, draw, fill=black, inner sep=2pt] (A) at (0,0) {};
  \node[circle, draw, fill=black, inner sep=2pt] (B) at (1,0) {};
  \node[circle, draw, fill=black, inner sep=2pt] (C) at (0,1) {};
  \node[circle, draw, fill=black, inner sep=2pt] (D) at (1,1) {};

  \draw (A) to (B);
  \draw (C) to[out=60,in=120,loop] ();
  \draw (D) to[out=60,in=120,loop] ();
\end{scope}

\begin{scope}[xshift=35mm]
  \node[circle, draw, fill=black, inner sep=2pt] (A) at (0,0) {};
  \node[circle, draw, fill=black, inner sep=2pt] (B) at (1,0) {};
  \node[circle, draw, fill=black, inner sep=2pt] (C) at (0,1) {};
  \node[circle, draw, fill=black, inner sep=2pt] (D) at (1,1) {};

  \draw (C) to[bend left=15] (D);
  \draw (C) to[bend right=15] (D);
  \draw (A) to (B);
\end{scope}

\begin{scope}[xshift=70mm]
  \node[circle, draw, fill=black, inner sep=2pt] (A) at (0,0) {};
  \node[circle, draw, fill=black, inner sep=2pt] (B) at (1,0) {};
  \node[circle, draw, fill=black, inner sep=2pt] (C) at (0,1) {};
  \node[circle, draw, fill=black, inner sep=2pt] (D) at (1,1) {};

  \draw (B) to (D);
  \draw (A) to (C);
  \draw (C) to (D);
\end{scope}

\begin{scope}[xshift=105mm]
  \node[circle, draw, fill=black, inner sep=2pt] (A) at (0,0) {};
  \node[circle, draw, fill=black, inner sep=2pt] (B) at (1,0) {};
  \node[circle, draw, fill=black, inner sep=2pt] (C) at (0,1) {};
  \node[circle, draw, fill=black, inner sep=2pt] (D) at (1,1) {};

  \draw (A) to (C);
  \draw (B) to (C);
  \draw (D) to[out=60,in=120,loop] ();
\end{scope}

\end{tikzpicture} & $S_2\times((S_2 \times S_2)\rtimes S_2)$\\
        \hline
        $[3,0,1]$       & \begin{tikzpicture}[scale=0.4, 
                    baseline=(current bounding box.center),
                    execute at end picture={
                        \path[use as bounding box]
                        ([yshift=-10pt]current bounding box.south west)
                        rectangle
                        ([yshift=10pt]current bounding box.north east);
                    }]

\begin{scope}[xshift=0mm]
  \node[circle, draw, fill=black, inner sep=2pt] (A) at (0, 1.2) {};
  \node[circle, draw, fill=black, inner sep=2pt] (B) at (1.4, 1.2) {};
  \node[circle, draw, fill=black, inner sep=2pt] (C) at (0, 0) {};
  \node[circle, draw, fill=black, inner sep=2pt] (D) at (1.4, 0) {};

  \draw (A) to (B);
  \draw (C) to (D);
  \draw (D) to[out=60,in=120,loop] ();
\end{scope}

\begin{scope}[xshift=35mm]
  \node[circle, draw, fill=black, inner sep=2pt] (A) at (0, 1) {};
  \node[circle, draw, fill=black, inner sep=2pt] (B) at (1.2, 0.5) {};
  \node[circle, draw, fill=black, inner sep=2pt] (C) at (0, 0) {};
  \node[circle, draw, fill=black, inner sep=2pt] (D) at (2.4, 0.5) {};

  \draw (A) to (B);
  \draw (C) to (B);
  \draw (B) to (D);
\end{scope}

\end{tikzpicture} & $S_3 \times S_3$\\
        \hline
        $[4,1]$         & \begin{tikzpicture}[scale=0.4, 
                    baseline=(current bounding box.center),
                    execute at end picture={
                        \path[use as bounding box]
                        ([yshift=-10pt]current bounding box.south west)
                        rectangle
                        ([yshift=10pt]current bounding box.north east);
                    }]

\begin{scope}[xshift=0mm]
  \node[circle, draw, fill=black, inner sep=2pt] (A) at (0, 1) {};
  \node[circle, draw, fill=black, inner sep=2pt] (B) at (1.6, 1) {};
  \node[circle, draw, fill=black, inner sep=2pt] (C) at (-0.2, 0) {};
  \node[circle, draw, fill=black, inner sep=2pt] (D) at (0.8, 0) {};
  \node[circle, draw, fill=black, inner sep=2pt] (E) at (1.8, 0) {};

  \draw (A) to (B);
  \draw (C) to (D);
  \draw (D) to (E);
\end{scope}

\begin{scope}[xshift=35mm]
  \node[circle, draw, fill=black, inner sep=2pt] (A) at (0, 1) {};
  \node[circle, draw, fill=black, inner sep=2pt] (B) at (1.2, 1) {};
  \node[circle, draw, fill=black, inner sep=2pt] (C) at (0, 0) {};
  \node[circle, draw, fill=black, inner sep=2pt] (D) at (1.2, 0) {};
  \node[circle, draw, fill=black, inner sep=2pt] (E) at (2, 0.5) {};

  \draw (A) to (B);
  \draw (C) to (D);
  \draw (E) to[out=60,in=120,loop] ();
\end{scope}

\end{tikzpicture} & $S_4 \times S_2$\\
        \hline
        $[6]$           & \begin{tikzpicture}[scale=0.4, 
                    baseline=(current bounding box.center),
                    execute at end picture={
                        \path[use as bounding box]
                        ([yshift=-10pt]current bounding box.south west)
                        rectangle
                        ([yshift=10pt]current bounding box.north east);
                    }]

\begin{scope}[xshift=0mm]
  \node[circle, draw, fill=black, inner sep=2pt] (A) at (0, 1) {};
  \node[circle, draw, fill=black, inner sep=2pt] (B) at (0.8, 1) {};
  \node[circle, draw, fill=black, inner sep=2pt] (C) at (1.6, 1) {};
  \node[circle, draw, fill=black, inner sep=2pt] (D) at (0, 0) {};
  \node[circle, draw, fill=black, inner sep=2pt] (E) at (0.8, 0) {};
  \node[circle, draw, fill=black, inner sep=2pt] (F) at (1.6, 0) {};

  \draw (A) to (D);
  \draw (B) to (E);
  \draw (C) to (F);
\end{scope}

\end{tikzpicture} & $S_6$\\
        \hline
    \end{tabular}
    \caption{All degree vectors $\vec\beta$ with $\|\vec\beta\|_E=3$, the corresponding graph representatives, and the symmetry group $S_{\vec\beta}\subset S_6$. For $\vec\beta=(\beta_1,\beta_2,\ldots)$, the group is $ _{\vec\beta}=\prod_{j\ge 1}\left(S_j^{\beta_j}\rtimes S_{\beta_j}\right)$, where $S_j^{\beta_j}$ permutes the slots within the $\beta_j$ vertices of degree $j$, and $S_{\beta_j}$ permutes those vertices among themselves.}
    \label{tab:degreeVectors}
\end{table}

\end{document}